\theoremstyle{plain}
\newtheorem{theorem}{Theorem}[section]
\newtheorem{lemma}[theorem]{Lemma}
\newtheorem{proposition}[theorem]{Proposition}
\newtheorem{corollary}[theorem]{Corollary}
\theoremstyle{definition}
\newtheorem{definition}[theorem]{Definition}
\newtheorem{example}[theorem]{Example}
\theoremstyle{remark}
\newtheorem{remark}[theorem]{Remark}
\numberwithin{equation}{section}
\numberwithin{table}{section}
\numberwithin{figure}{section}
\renewcommand{\qed}{\hfill {$\Box$}}
\renewcommand{\AA}{\mathord{\mathbb A}}
\newcommand{\CC}{\mathord{\mathbb C}}
\newcommand{\PP}{\mathord{\mathbb  P}}
\newcommand{\QQ}{\mathord{\mathbb  Q}}
\newcommand{\RR}{\mathord{\mathbb R}}
\newcommand{\ZZ}{\mathord{\mathbb Z}}
\newcommand{\AAA}{\mathord{\mathcal A}}
\newcommand{\CCC}{\mathord{\mathcal C}}
\newcommand{\DDD}{\mathord{\mathcal D}}
\newcommand{\FFF}{\mathord{\mathcal F}}
\newcommand{\GGG}{\mathord{\mathcal G}}
\newcommand{\KKK}{\mathord{\mathcal K}}
\newcommand{\LLL}{\mathord{\mathcal L}}
\newcommand{\PPP}{\mathord{\mathcal P}}
\newcommand{\RRR}{\mathord{\mathcal R}}
\newcommand{\WWW}{\mathord{\mathcal W}}
\newcommand{\SSSS}{\mathord{\mathfrak S}}
\newcommand{\inj}{\hookrightarrow}
\newcommand{\surj}{\mathbin{\to \hskip -7pt \to}}
\newcommand{\set}[2]{\{\,{#1}\mid {#2} \,\}}
\newcommand{\bigset}[2]{\left\{\; {#1} \; \left\vert \; {#2} \;  \right.\right \}}
\newcommand{\angs}[1]{\langle {#1}  \rangle}
\newcommand{\tensor}{\otimes}
\newcommand{\inv}{\sp{-1}}
\newcommand{\dual}{\sp{\vee}}
\newcommand{\sprime}{\sp{\prime}}
\newcommand{\sprimeinv}{\sp{\prime-1}}
\newcommand{\spar}[1]{\sp{(#1)}}
\newcommand{\spprime}{\sp{\prime\prime}}
\newcommand{\spcirc}{\sp{\mathord{\circ}}}
\newcommand{\sptimes}{\sp{\times}}
\newcommand{\sperp}{\sp{\perp}}
\newcommand{\Aut}{\mathord{\mathrm{Aut}}}
\newcommand{\Sing}{\operatorname{\mathrm{Sing}}}
\newcommand{\OG}{\mathord{\mathrm{O}}}
\newcommand{\SO}{\mathord{\mathrm{SO}}}
\newcommand{\id}{\mathord{\mathrm{id}}}
\newcommand{\quand}{\quad{\textrm{and}}\quad}
\newcommand{\mystruth}[1]{\phantom{\hbox{\vrule height #1}}}
\newcommand{\mystrutd}[1]{\phantom{\hbox{\vrule depth #1}}}
\newcommand{\intf}[1]{\langle #1 \rangle}
\newcommand{\ordplus}{\mathord{+}}
\newcommand{\ordminus}{\mathord{-}}
\newcommand{\barX}{\overline{X}}
\newcommand{\barL}{\overline{L}}
\newcommand{\NS}{\mathord{\mathrm{NS}}}
\newcommand{\SX}{S_X}
\newcommand{\NX}{N_X}
\newcommand{\VX}{V_X}
\newcommand{\RX}{{\RRR}_X}
\newcommand{\PX}{{\PPP}_X}
\newcommand{\LMrats}{\LLL_{32}}
\newcommand{\Sep}{\mathord{\mathrm{Sep}}}
\newcommand{\RatsX}{\mathord{\mathrm{Rats}}(X)}
\newcommand{\theenr}{\varepsilon}
\newcommand{\Lts}{L_{26}} 
\newcommand{\Pts}{{\PPP}_{26}}
\newcommand{\Rts}{{\RRR}_{26}}
\newcommand{\RRRL}{\RRR_{L}}
\newcommand{\Concham}{\mathord{\mathbf{C}}}
\newcommand{\weyl}{\mathord{\mathbf{w}}}
\newcommand{\ampleL}{\mathord{\mathbf{a}_L}}
\newcommand{\vect}[1]{\mathord{\mathbf{#1}}}
\newcommand{\spbr}[1]{\sp{[#1]}}
\newcommand{\MW}{\mathord{\mathrm{MW}}}
\newcommand{\PPPL}{\PPP_{L}}
\newcommand{\PPPM}{\PPP_{M}}
\newcommand{\PPPf}{\PPP_{f}}
\newcommand{\PPPC}{\PPP_{\CCC}}
\newcommand{\Faces}[1]{\FFF^{#1}}
\newcommand{\CCCC}{\mathfrak{C}}
\newcommand{\tilV}{\widetilde{V}}
\newcommand{\tilVC}{\tilV_{\CCC}}
\newcommand{\VC}{V_{\CCC}}
\newcommand{\Adj}{\mathord{\mathrm{Adj}}}
\newcommand{\Rel}{\RRR}
\newcommand{\Reltriv}{\Rel_{\mathord{\mathrm{triv}}}}
\newcommand{\Relface}{\Rel_{\mathord{\mathrm{face}}}}
\newcommand{\Gens}{\Gamma}
\newcommand{\Genszero}{\Gens_0}
\newcommand{\GensA}{\Gens_{\AAA}}
\newcommand{\Gensst}{\Gens^{*}}
\newcommand{\empseq}{\varepsilon}
\newcommand{\word}[1]{\boldsymbol{#1}}
\newcommand{\mult}{m}
\newcommand{\typeI}{\mathrm{I}}
\newcommand{\typeII}{\mathrm{II}}
\newcommand{\boldgreek}[1]{\boldsymbol{#1}}
\newcommand{\blambda}{\boldsymbol{\lambda}}
\newcommand{\brho}{\boldgreek{\rho}}
\newcommand{\gh}{\mathrm{gh}}
\newcommand{\WK}{\WWW_{\KKK}}
\newcommand{\ADE}{\mathord{\mathrm{ADE}}}
\newcommand{\tilA}{\widetilde{A}}
\newcommand{\tilD}{\widetilde{D}}
\newcommand{\tilE}{\widetilde{E}}
\begin{document}

\title[Ap\'ery--Fermi $K3$ surface]
{The automorphism group of an Ap\'ery--Fermi $K3$ surface}

\author{Ichiro Shimada}
\address{Department of Mathematics,
Graduate School of Science,
Hiroshima University,
1-3-1 Kagamiyama,
Higashi-Hiroshima,
739-8526 JAPAN}
\email{ichiro-shimada@hiroshima-u.ac.jp}
\thanks{Supported by JSPS KAKENHI Grant Number~20H01798, 20K20879, 20H00112, and 23H00081}
\thanks{The author declares that there is no conflict of interest. 
The data supporting the findings of this study are publicly available at~\url{https://doi.org/10.5281/zenodo.14842583}.
}
\begin{abstract}
An Ap\'ery--Fermi $K3$ surface is a complex $K3$ surface of Picard number $19$
that is birational to 
a general member of
a certain one-dimensional family of affine surfaces 
related to the Fermi surface in solid-state physics.
This $K3$ surface is also linked to 
a recurrence relation that appears in the famous proof of the irrationality of $\zeta(3)$ by Ap\'ery.
\par
We compute the automorphism group $\Aut(X)$ 
of the Ap\'ery--Fermi $K3$ surface $X$ using Borcherds' method.
We describe $\Aut(X)$ in terms of  generators and  relations.
Moreover, we determine the action of $\Aut(X)$ 
on the set of $\ADE$-configurations of smooth rational curves on $X$ for some $\ADE$-types.
In particular, we show that $\Aut(X)$ acts transitively on the set of smooth rational curves,
and that it partitions the set of
 pairs of disjoint smooth rational curves into two orbits.
\end{abstract}
\keywords{K3 surface, automorphism group, lattice}
\makeatletter
\@namedef{subjclassname@2020}{\textup{2020} Mathematics Subject Classification}
\makeatother
\subjclass[2020]{14J28}
\maketitle
\section{Introduction}
\subsection{Main results}\label{subsec:mainresults}
We consider a pencil of complex affine surfaces $X_s\spcirc\subset\AA^3$ 
defined by the equation 
\begin{equation}\label{eq:Fermi}
\xi_1+\frac{1}{\xi_1}+\xi_2+\frac{1}{\xi_2}+\xi_3+\frac{1}{\xi_3}=s,
\end{equation}
where $\xi_1, \xi_2, \xi_3$ are coordinates of $\AA^3$, and $s\in \CC$ is a parameter.
When $s$ is very general, the surface $X_s\spcirc$ is birational to a projective $K3$ surface $X_s$
whose N\'eron--Severi lattice is isomorphic to
\begin{equation}\label{eq:M6}
M_6:=U\oplus E_8(-1)\oplus E_8(-1)\oplus \angs{-12},
\end{equation}
where $U$ is the hyperbolic plane,
$E_8(-1)$ is the negative-definite root lattice of type $E_8$,
and $\angs{-12}$ is a rank-one lattice 
generated by a vector with square-norm $-12$.
We call the $K3$ surface $X_s$ with $s$ sufficiently general an \emph{Ap\'ery--Fermi $K3$ surface}.
For simplicity, we assume that the parameter $s$ is very general throughout this work.
\par
In this paper, we study
the automorphism group $\Aut(X_s)$ of the Ap\'ery--Fermi $K3$ surface $X_s$
by using Borcherds' method.
We provide a finite set of generators of $\Aut(X_s)$,
and describe the action of $\Aut(X_s)$ on the nef-and-big cone of $X_s$ explicitly.
We prove that the nef-and-big cone of $X_s$ is tessellated by copies of a polyhedral cone with $80$ walls,
that the action of $\Aut(X_s)$ preserves this tessellation,
and that $\Aut(X_s)$ acts transitively on the set of tiles of this tessellation 
 with the stabilizer subgroup $\Aut(X_s, D_0)$ of a tile $D_0$
being isomorphic to a dihedral group of order $16$.
Using this tessellation, we obtain the following result in Section~\ref{sec:NSlattice}:
\begin{theorem}\label{thm:aut}
The automorphism group $\Aut(X_s)$ is generated by a finite subgroup $\Aut(X_s, D_0)$
of order $16$, and eight extra automorphisms.
\end{theorem}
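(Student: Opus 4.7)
The plan is to invoke the standard wall-crossing principle of Borcherds' method, using the tessellation of the nef-and-big cone established in the discussion preceding the statement. That discussion provides the three key ingredients: the nef-and-big cone of $X_s$ is tiled by $\Aut(X_s)$-translates of a polyhedral cone $D_0$ with $80$ walls, the group $\Aut(X_s)$ permutes the tiles transitively, and the stabilizer $\Aut(X_s, D_0)$ is a dihedral group of order~$16$. Combining transitivity with a path-connectivity argument inside the nef-and-big cone yields the general principle that $\Aut(X_s)$ is generated by $\Aut(X_s, D_0)$ together with one ``wall-crossing'' element $g_W$ for each $\Aut(X_s, D_0)$-orbit of walls $W$ separating $D_0$ from an adjacent tile strictly inside the nef-and-big cone.

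Next, I would classify the $80$ walls of $D_0$ into \emph{outer walls}, supported on the boundary of the nef-and-big cone and hence dual to smooth rational curves on $X_s$, and \emph{inner walls}, each of which is shared with a unique neighboring tile of the form $g_W(D_0)$. Only the inner walls contribute generators: given any $g \in \Aut(X_s)$, one joins $D_0$ to $g(D_0)$ by a chain of tiles adjacent across inner walls, and decomposes $g$ accordingly into a product of the $g_W$ and an element of the stabilizer. For each $\Aut(X_s, D_0)$-orbit of inner walls I would then fix a representative $W_i$ and exhibit an explicit automorphism $g_i \in \Aut(X_s)$ with $g_i(D_0)$ equal to the tile across $W_i$. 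The substance of the theorem is that there are exactly eight such orbits, so that $\Aut(X_s, D_0)$ together with $g_1, \ldots, g_8$ generate $\Aut(X_s)$.

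The main obstacle is the combinatorial step: enumerating the $80$ walls of $D_0$ as vectors in $M_6$, determining which are outer by matching their defining classes against the known list of smooth rational curves on $X_s$ (using the data $\RatsX$ that the paper has developed), and decomposing the remaining inner walls into orbits under the dihedral action of $\Aut(X_s, D_0)$. This is a finite but delicate lattice-theoretic calculation, and the whole statement hinges on this orbit count coming out to exactly eight. Once the classification is in hand, the construction of each $g_i$ is routine: one produces an isometry of $M_6$ that preserves the ample cone, sends $D_0$ to the prescribed adjacent tile, and acts trivially on the discriminant form, and then invokes the Torelli theorem to realize it as an automorphism of $X_s$.
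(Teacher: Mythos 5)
Your proposal follows essentially the same route as the paper: Borcherds' method produces the tile $D_0$ with $80$ walls, the walls split under $\Aut(X_s,D_0)$ into two orbits of outer walls (dual to smooth rational curves) and eight orbits of inner walls, and transitivity on the tiles plus the wall-crossing/connectivity principle (Proposition 4.1 of Brandhorst--Shimada) yields generation by $\Aut(X_s,D_0)$ and one element of $\Adj(w)$ per inner-wall orbit. The only small imprecision is that the Torelli criterion used here requires the isometry to act as $\pm 1$ on the discriminant form, not trivially; otherwise the argument matches the paper's.
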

In Section~\ref{sec:geometric},
we provide an explicit geometric description of these generators
in terms of Mordell--Weil groups of Jacobian fibrations, using 
the algorithm for computing the Mordell--Weil action on the N\'eron--Severi lattice 
described in our previous paper~\cite{Shimada2024}.
We also analyze the faces of $D_0$, and 
using the list of codimension-$2$ faces,
we describe $\Aut(X_s)$ in terms of generators and relations in~Section~\ref{sec:relations}.
\par
Next we study the action of $\Aut(X_s)$ on the set of $\ADE$-configurations of smooth rational curves on $X_s$.
Let $\tau$ be an ordinary $\ADE$-type,
and let $\mu$ be the number of nodes in the corresponding Dynkin diagram.
We denote by $\CCCC(\tau)$ the set of all non-ordered sets $\CCC=\{C_1, \dots, C_{\mu}\}$
of smooth rational curves on $X_s$ such that the dual graph of $\CCC$ is the Dynkin diagram of type $\tau$.
For example, $\CCCC(A_1)$ is the set of smooth rational curves on $X_s$,
 $\CCCC(2A_1)$ is the set of non-ordered pairs of disjoint smooth rational curves,
 whereas $\CCCC(A_2)$ is the set of non-ordered pairs of smooth rational curves intersecting at one point transversely.
\begin{theorem}\label{thm:rats}
For $\mu\leq 4$, the numbers of the orbits of the action of $\Aut(X_s)$ on the set $\CCCC(\tau)$
are given in Table~\ref{table:sizesCCC}.
\end{theorem}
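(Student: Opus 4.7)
The strategy is to leverage the $\Aut(X_s)$-invariant tessellation of the nef-and-big cone by copies of the polytope $D_0$, together with the transitivity of $\Aut(X_s)$ on tiles and the order-$16$ dihedral stabilizer $\Aut(X_s, D_0)$, in order to reduce the orbit-counting problem for $\CCCC(\tau)$ to a finite combinatorial problem inside $D_0$. Each smooth rational curve $C \subset X_s$ has class $[C] \in \SX$ with $[C]\sperp$ defining a wall of the nef-and-big cone, hence a wall of some tile of the tessellation. By the transitivity of $\Aut(X_s)$ on tiles, each $\Aut(X_s)$-orbit on $\CCCC(A_1)$ meets the set of \emph{outer} walls of $D_0$, namely those walls of $D_0$ that also lie on the boundary of the nef-and-big cone. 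The $\Aut(X_s)$-orbits on $\CCCC(A_1)$ therefore correspond bijectively to $\Aut(X_s, D_0)$-orbits on this finite subset of the $80$ walls of $D_0$, and a direct count yields the asserted single orbit.

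For $\tau$ with $\mu \geq 2$, I would fix a single-curve representative $C_1$ as an outer wall of $D_0$ and enumerate all admissible completions $(C_2, \ldots, C_\mu)$ to a Dynkin diagram of type $\tau$. Using an ample class $h \in D_0$ and the description of smooth rational curves afforded by the Borcherds method (cf.\ Section~\ref{sec:geometric}), I would enumerate all $(-2)$-classes $v \in \SX$ with $v \cdot h$ below an explicit bound and with the correct intersection profile against the previously fixed curves, then test each candidate for being the class of a smooth rational curve by checking nef-positivity against the outer walls of tiles in a bounded neighborhood of $D_0$. The resulting finite list of $\mu$-tuples is then quotiented by the stabilizer in $\Aut(X_s)$ of the initial segment $\{C_1, \ldots, C_k\}$, which itself is computed by restricting the Borcherds tessellation to the face of $\overline{\mathrm{Nef}(X_s)}$ cut out by $[C_1]\sperp \cap \cdots \cap [C_k]\sperp$.

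The main obstacle is to certify completeness of the enumeration: one must exhibit an explicit bound $N = N(\tau)$ such that every $\Aut(X_s)$-orbit of $\CCCC(\tau)$ has a representative $\CCC$ containing some $C_i$ with $[C_i] \cdot h \leq N$. This is a consequence of the negative-definiteness of the $\ADE$-sublattice spanned by $\CCC$: since its rank is $\mu \leq 4$, its perpendicular subspace meets $\overline{\mathrm{Nef}(X_s)}$ in a face of dimension $19 - \mu \geq 15$, and this face meets only finitely many $\Aut(X_s, D_0)$-orbits of adjacent tiles. The concrete bound can be extracted from the discriminant of the $\ADE$-lattice inside $M_6$ together with the explicit description of $D_0$ and of its codimension-$2$ faces compiled in Section~\ref{sec:relations}. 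Performing this enumeration for each of the types $A_1$, $2A_1$, $A_2$, $3A_1$, $A_1 + A_2$, $A_3$, $4A_1$, $2A_1 + A_2$, $2A_2$, $A_1 + A_3$, $A_4$, and $D_4$ then yields Table~\ref{table:sizesCCC}.
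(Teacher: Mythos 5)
Your overall strategy---reduce the orbit count to finite combinatorics on the faces of $D_0$ using the $\Aut(X)$-invariant tessellation of $\NX$ and the transitivity on tiles---is the same starting point as the paper's. But there is a genuine gap, and it already shows up in the first line of the table. You claim that the $\Aut(X_s)$-orbits on $\CCCC(A_1)$ ``correspond bijectively'' to the $\Aut(X_s,D_0)$-orbits on the outer walls of $D_0$. That correspondence is only a surjection, not a bijection: the outer walls of $D_0$ fall into \emph{two} $\Aut(X,D_0)$-orbits ($o_1$ of size $8$ and $o_2$ of size $16$), yet $\CCCC(A_1)$ is a \emph{single} $\Aut(X)$-orbit. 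A smooth rational curve $C$ supports walls of many different tiles, and transporting these back to $D_0$ by tile-transitivity can land in different $\Aut(X,D_0)$-orbits of walls; so distinct $\Aut(X,D_0)$-orbits of faces may be fused under the full group $\Aut(X)$. Your ``direct count'' would therefore return $2$ for $A_1$ instead of $1$, and the same fusion phenomenon is massive in higher codimension (e.g.\ for $\tau=2A_1$ there are $23$ orbits of codimension-$2$ faces of the relevant type but only $2$ orbits of configurations). Detecting exactly which face-orbits get identified is the actual content of the proof; the paper does it by running Borcherds' method again on the sublattice $[\CCC]\sperp$, i.e.\ by building a graph on $\Faces{\mu}(D_0,\tau)/\Aut(X,D_0)$ whose edges record adjacency of the induced $\Lts/[\CCC]\sperp$-chambers (this is why the sets $\Faces{\mu+1}(D_0)$ are also needed), and counting connected components.

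Your plan for $\mu\ge 2$ has the analogous problem plus two further unresolved points. First, ``quotienting by the stabilizer in $\Aut(X_s)$ of the initial segment'' presupposes that you can compute that stabilizer, which is an infinite group; determining it is itself a Borcherds-method computation on the orthogonal complement lattice, essentially the same work you are trying to avoid. Second, your completeness bound $N(\tau)$ is not actually established: the observation that $\PPPC\cap\overline{\NX}$ is a face of large dimension meeting finitely many tile-orbits does not by itself produce an explicit degree bound on some member of the configuration, and no such bound is extracted. The paper sidesteps the need for any degree bound entirely: Proposition~\ref{prop:PPPCgivesface} shows every configuration $\CCC$ arises as $\CCC(f)$ for some face $f$ of some tile, so the finite list $\Faces{\mu}(D_0,\tau)$ (computed once by linear programming, wall by wall) already surjects onto $\CCCC(\tau)/\Aut(X)$ with no enumeration of $(-2)$-classes of bounded degree required.
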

%
%
\begin{table}
\[
\begin{array}{clc}
\mu & \tau & |\CCCC(\tau)/\Aut(X_s)|\mystrutd{5pt}\\
\hline 
1 & A_{1} & 1 \mystruth{11pt} \\ 
2 & 2A_{1} & 2 \\ 
2 & A_{2} & 1 \\ 
3 & 3A_{1} & 2 \\ 
3 & A_{1}+A_{2} & 1 \\ 
3 & A_{3} & 3 \\ 
\end{array}
\qquad \qquad 
\begin{array}{clc}
\mu & \tau & |\CCCC(\tau)/\Aut(X_s)| \mystrutd{5pt}\\
\hline 
4 & 4A_{1} & 2 \mystruth{11pt} \\ 
4 & 2A_{1}+A_{2} & 2 \\ 
4 & A_{1}+A_{3} & 9 \\ 
4 & 2A_{2} & 2 \\ 
4 & A_{4} & 1 \\ 
4 & D_{4} & 2 
\end{array}
%
\]
\caption{Sizes of $\CCCC(\tau)/\Aut(X_s)$}\label{table:sizesCCC}
\end{table}
\begin{corollary}\label{cor:ratstransitive}
The group $\Aut(X_s)$ acts on the set of smooth rational curves on $X_s$ transitively.
\qed
\end{corollary}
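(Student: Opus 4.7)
The plan is essentially a one-line deduction from the already-stated Theorem~\ref{thm:rats}. By the definition of $\CCCC(\tau)$ given just before Theorem~\ref{thm:rats}, the set $\CCCC(A_1)$ is precisely the set of smooth rational curves on $X_s$ (a single smooth rational curve realizes the Dynkin diagram $A_1$, which has $\mu = 1$ node). The first row of Table~\ref{table:sizesCCC} records $|\CCCC(A_1)/\Aut(X_s)| = 1$, which is exactly the assertion that $\Aut(X_s)$ acts transitively on the set of smooth rational curves. So the proof is simply to point at that row of the table; no computation beyond Theorem~\ref{thm:rats} is required, which is consistent with the author placing $\qed$ immediately after the statement.

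Since the real content lives in Theorem~\ref{thm:rats}, it is worth sketching how the $\tau = A_1$ instance of that theorem is established, so that the corollary is not a black-box deduction. By the earlier discussion, the nef-and-big cone of $X_s$ is tessellated by copies of the polyhedral cone $D_0$ with $80$ walls, and $\Aut(X_s)$ permutes the tiles transitively with stabilizer $\Aut(X_s, D_0)$ of order $16$. Every smooth rational curve $C$ on $X_s$ defines a wall of the nef-and-big cone, and this wall must coincide with a wall of some tile in the tessellation; using the transitivity on tiles, we may translate $C$ by an element of $\Aut(X_s)$ so that $[C]$ becomes the outward-pointing normal of a wall of $D_0$.

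Hence the orbits of $\Aut(X_s)$ on $\CCCC(A_1)$ are in bijection with the orbits of $\Aut(X_s, D_0)$ on the subset $W_0 \subset \Faces{1}$ consisting of those walls of $D_0$ whose defining $(-2)$-vector is the class of a smooth rational curve. One identifies $W_0$ explicitly from the list of $80$ walls of $D_0$ (using the separation into wall types supplied by Borcherds' method), and then computes the orbits of the dihedral group $\Aut(X_s, D_0)$ of order $16$ on $W_0$. The content of the $A_1$ row of Table~\ref{table:sizesCCC} is that this action is transitive, yielding a single orbit. The main obstacle is not conceptual but computational: faithfully implementing the enumeration of walls of $D_0$ and the $\Aut(X_s, D_0)$-action on them. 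Granting Theorem~\ref{thm:rats}, the corollary itself is automatic.
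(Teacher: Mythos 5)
Your first paragraph is formally fine: granting Theorem~\ref{thm:rats}, the $A_1$ row of Table~\ref{table:sizesCCC} is literally the statement of the corollary. But note that the paper deliberately does \emph{not} derive the corollary this way: Theorem~\ref{thm:rats} is only proved in Section~\ref{subsec:orbitCCCC} after the full face-enumeration machinery, whereas the corollary is proved already in Section~\ref{subsec:ProofOfCor} using only the output of Borcherds' method (Proposition~\ref{prop:fromfact3}) and the explicit lists~\eqref{eq:o1},~\eqref{eq:o2} of outer walls. So the substance of your proposal lies in your second and third paragraphs, where you sketch the underlying argument --- and that sketch contains a genuine error.

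You claim that the $\Aut(X)$-orbits on $\CCCC(A_1)$ are in bijection with the $\Aut(X,D_0)$-orbits on the set of outer walls of $D_0$, and that the latter action is transitive. Neither is true. The map from $\Aut(X,D_0)$-orbits of outer walls to $\Aut(X)$-orbits of smooth rational curves is only a surjection: two outer walls of $D_0$ lying in different $\Aut(X,D_0)$-orbits can perfectly well be defined by curves in the same $\Aut(X)$-orbit, identified by an automorphism that does not stabilize $D_0$. And indeed this is exactly what happens here: $\Aut(X,D_0)$ (of order $16$) has \emph{two} orbits on the $24$ outer walls, namely $o_1$ (size $8$, with defining curves listed in~\eqref{eq:o1}) and $o_2$ (size $16$, with defining curves~\eqref{eq:o2}); these remain distinct even under the larger group $\OG(\SX,D_0)$. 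Your argument as written would therefore output two orbits of smooth rational curves, not one. The missing step --- which is the actual content of the paper's proof --- is the fusion of the two wall-orbits: after moving an arbitrary curve class to $L_{0\ordplus\ordplus}$ or to $L_{\ordplus 0\ordplus}$ via transitivity on tiles plus $\Aut(X,D_0)$, one invokes the finite subgroup $\Aut(X,\LMrats)$ of order $48$ (which is \emph{not} contained in $\Aut(X,D_0)$), whose action on $\LMrats$ has the $12$ curves $L_{\gamma_1\gamma_2\gamma_3}$ with one $\gamma_i=0$ as a single orbit (Fact (b) of Section~\ref{subsec:AutXLMrats}), to carry $L_{\ordplus 0\ordplus}$ to $L_{0\ordplus\ordplus}$. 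Without some such extra identification the argument is incomplete; this is precisely the phenomenon that the graph $[\FFF]$ and its connected components formalize for higher-codimension faces in Section~\ref{subsec:orbitCCCC}.
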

In fact,
it is theoretically possible to obtain 
the same result for $\ADE$-types $\tau$ with higher Milnor numbers $\mu$.
However, we stopped the computation at $\mu=4$ because 
the computation becomes too expensive for $\mu\ge 5$.
See~Section~\ref{subsec:orbitCCCC}.
\par
Our result is obtained by using Borcherds' method. 
This method was introduced by Borcherds~\cite{Bor1}, ~\cite{Bor2}, and 
its first geometric application was given by Kondo~\cite{Kondo1998}.
In~\cite{Shimada2015} and~\cite{ Shimada2024}, we presented 
tools and techniques for implementing Borcherds' method in a computer.
\par
Borcherds' method has been applied to many $K3$ and Enriques surfaces. 
For the Ap\'ery--Fermi $K3$ surface, the tasks of computing  
a finite  generating set of $\Aut(X_s)$ and  obtaining geometric realizations of these generators
were carried out  smoothly by 
the  tools that had been established previously. 
A new tool introduced in this paper is 
an algorithm that enumerates the faces of 
higher-codimensions of
the nef-and-big cone modulo $\Aut(X_s)$, which is described in Section~\ref{sec:faces}.
By this tool,  we obtain Theorem~\ref{thm:rats}  above, 
and defining relations  of $\Aut(X_s)$ as presented in Section~\ref{sec:relations}.
\par
With the advances in machine computing power, 
the geometric information that can be obtained by  Borcherds' method is rapidly expanding. 
A project is also underway to implement this method in a new  computer algebra system~\cite{OSCAR_K3Surfaces}.
A secondary aim of this paper is to highlight the power and utility of Borcherds'  method
through its application 
 to a significant $K3$ surface. 
\subsection{Previous studies of the Ap\'ery--Fermi \texorpdfstring{$K3$}{K3} surface}\label{subsec:history}
The Ap\'ery--Fermi $K3$ surface is an important $K3$ surface that has been extensively studied
by many authors.
Here, we provide a brief review of previous works related to the Ap\'ery--Fermi $K3$ surface.
\par
In 1984, Beukers and Peters~\cite{BeukersPeters1984} 
constructed a one-dimensional family of $K3$ surfaces
whose Picard-Fuchs equation is the differential equation 
arising in Ap\'ery's famous proof~\cite{Apery1979} of irrationality of $\zeta(3)$.
In 1986, Peters~\cite{Peters1986} determined the N\'eron--Severi lattice and
the transcendental lattice of the general member of this family, and 
in 1989, Peters and Stienstra~\cite{PetersStienstra1989}
showed that 
the general member is an Ap\'ery--Fermi $K3$ surface
defined above. 
The equation~\eqref{eq:Fermi}
has its origin in the solid-state physics,
where it is related to the Fermi surface of electrons moving in a crystal.
See~Introduction of~\cite{PetersStienstra1989}  and the reference therein for the background in physics.
\par
In 1996, Dolgachev~\cite{DolgachevLatticePolarizedK3} introduced the notion of \emph{lattice polarized $K3$ surfaces}. 
The Ap\'ery--Fermi $K3$ surface is an $M_6$-lattice polarized $K3$ surface, 
where $M_6$ is defined in~\eqref{eq:M6}, 
and 
Dolgachev~\cite{DolgachevLatticePolarizedK3} determined, among other things, 
the coarse moduli space of Ap\'ery--Fermi $K3$ surfaces.
In 2004, Hosono et al.~\cite{HLOY2004} used Ap\'ery--Fermi $K3$ surfaces 
in the study of 
the autoequivalences of derived category of its Fourier--Mukai partner,
a $K3$ surface with Picard number $1$ and of degree $12$.
In the paper~\cite{DardanelliGeemen2007} by Dardanelli and van Geemen,
the Ap\'ery--Fermi $K3$ surfaces appear as the Hessians of certain cubic surfaces
(see Proposition~5.7 of~\cite{DardanelliGeemen2007}).
\par
On the other hand, there exists a  rigid Calabi--Yau $3$-fold birational to 
a smooth affine $3$-fold defined by 
\begin{equation}\label{eq:rigidCalabiYau3fold}
\xi_1+\frac{1}{\xi_1}+\xi_2+\frac{1}{\xi_2}+\xi_3+\frac{1}{\xi_3}+\xi_4+\frac{1}{\xi_4}=0.
\end{equation}
Its modularity was studied by
van Geemen and Nygaard~\cite{vanGeemenNygaard1995},
Verrill~\cite{Verrill2000}, and 
Ahlgren and Ono~\cite{AhlgrenOno2000}.
\par
In 2015, Mukai and Ohashi~\cite{MukaiOhashi2015} found another birational model 
of the Ap\'ery--Fermi $K3$ surface:
the symmetric quartic surface $Y_t\subset \PP^3$ defined by
\begin{equation}\label{eq:SymQ}
(x_1 x_2+x_1 x_3+x_1 x_4+ x_2 x_3 + x_2 x_4+ x_3 x_4)^2= t\, x_1 x_2 x_3 x_4,
\end{equation}
where $(x_1:x_2:x_3:x_4)$ are homogeneous coordinates of $\PP^3$ and $t\in \CC$ is a parameter.
Mukai and Ohashi~\cite{MukaiOhashi2015} exhibited an Enriques involution $\theenr$ of $Y_t$,
and described the automorphism group of the Enriques surface birational to $Y_t/\angs{\theenr}$.
\begin{remark}\label{rem:informed}
To the best of our knowledge, 
the fact that the Ap\'ery--Fermi $K3$ surface $X_s$ is
birational to the quartic surface $Y_t$ for some $t=t(s)$
has not yet appeared in the literature.
We were informed of this fact through personal communication with the authors of~\cite{MukaiOhashi2015}.
See Proposition~\ref{prop:isom} for the proof of this fact.
\end{remark}
In 2020, Bertin and Lecacheux~\cite{BertinLecacheux2020} determined all Jacobian fibrations 
of the Ap\'ery--Fermi $K3$ surface by
using Kneser--Nishiyama method.
Some special members of the pencil~\eqref{eq:Fermi} 
with Picard number $20$ 
have also been studied,
for example, 
in~\cite{Shimada2015},~\cite{Shimada2016}, 
and in Bertin and Lecacheux~\cite{BertinLecacheux2013},~\cite{BertinLecacheux2022}.
In~\cite{DinoDuco2019}, Festi and van Straten provided an account on the relation 
between the Ap\'ery--Fermi $K3$ surfaces 
and quantum electrodynamics,
highlighting the importance of studying this $K3$ surface.
\subsection{Plan of this paper}\label{subsec:plan}
In Section~\ref{sec:twomodels}, we review the result of
Peters and Stienstra~~\cite{PetersStienstra1989},
and present $32$ smooth rational curves on the $K3$ surface $X_s$
whose classes generate the N\'eron--Severi lattice of $X_s$.
We also compare $X_s$ with the quartic surface $Y_t$ of Mukai and Ohashi~\cite{MukaiOhashi2015},
and prove that $X_s$ is birational to $Y_t$ for a suitable choice of $t$ (see Remark~\ref{rem:informed}).
In Section~\ref{sec:NSlattice},
we execute Borcherds' method,
and obtain a set of generators of $\Aut(X_s)$ \emph{lattice-theoretically}, 
thereby proving Theorem~\ref{thm:aut}.
We also describe the finite polytope $D_0$ with $80$ walls.
Although Corollary~\ref{cor:ratstransitive} is a part of Theorem~\ref{thm:rats},
it can already be proved at this stage, and hence  we provide its proof in this section.
In Section~\ref{sec:geometric},
we give geometric realization to each of the generators of $\Aut(X_s)$ given in Theorem~\ref{thm:aut}.
In Section~\ref{sec:faces},
we calculate the set of faces of the polytope $D_0$,
and prove Theorem~\ref{thm:rats} in Section~\ref{subsec:orbitCCCC}.
In Section~\ref{sec:relations},
we explain how to describe $\Aut(X_s)$ in terms of generators and relations 
using the codimension-$2$ faces of $D_0$.
\par
Detailed computational data are available from~\cite{AFcompdata}.
For our computation, we used {\tt GAP}~\cite{GAP}.
\par
\medskip
{\bf Acknowledgements.}
We are grateful to  Professor Shigeyuki Kondo,  Professor Shigeru Mukai, and Professor Hisanori Ohashi for 
providing information
about the quartic surface $Y_t$.
We also thank 
Professor Takuya Yamauchi for enlightening us 
about the rigid Calabi--Yau $3$-fold~\eqref{eq:rigidCalabiYau3fold}.
Finally, we thank the referees for their many helpful comments and suggestions. 
\section{Two projective models of an Ap\'ery--Fermi \texorpdfstring{$K3$}{K3}  surface}\label{sec:twomodels}
In Sections~\ref{subsec:32PS} and~\ref{subsec:32MO}, 
we review results by Peters and Stienstra~\cite{PetersStienstra1989}, 
and by Mukai and Ohashi~\cite{MukaiOhashi2015}, respectively.
The main purpose of this section is 
to label certain $32$ smooth rational curves on an Ap\'ery--Fermi $K3$ surface. 
Since we employ the labeling of~\cite{PetersStienstra1989} and use it throughout this paper, 
the results in Section~\ref{subsec:32MO} are not  used for the computation of 
the automorphism group.
\subsection{The Fermi surface model}\label{subsec:32PS}
We review the result of Peters and Stienstra~\cite{PetersStienstra1989}.
Let $X_s\spcirc$ be the affine surface in $\AA^3$ defined by the equation~\eqref{eq:Fermi},
and let $X_s$ be the $K3$ surface containing $X_s\spcirc$ as a Zariski open subset.
Recall that we have assumed that the parameter $s\in \CC$ is very general.
We present $32$ smooth rational curves on $X_s$ 
whose classes generate the N\'eron--Severi lattice $\NS(X_s)$ of $X_s$.
\par
The $K3$ surface $X_s$ is isomorphic to 
a smooth surface in $\PP^6\times \PP^1\times \PP^1\times \PP^1$
defined by the equation (4) in~\cite{PetersStienstra1989}.
Considering 
 the projection onto the first factor $\PP^6$, we see that $X_s$ is birational to the surface $\barX_s$ in $\PP^6$ 
defined by
\begin{equation}\label{eq:wuuuvvv}
\begin{aligned}
&u_1+u_2+u_3+v_1+v_2+v_3=s w, & \\
&u_1 v_1-w^2=u_2 v_2-w^2=u_3 v_3-w^2=0,&
\end{aligned}
\end{equation}
where $(w:u_1:u_2:u_3:v_1:v_2:v_3)$ is a homogeneous coordinate system of $\PP^6$ such that 
 we have $\xi_i=u_i/w=w/v_i$
on $X_s\spcirc$.
We denote by $H_{\infty}$ the hyperplane of $\PP^6$ defined by $w=0$.
For $i=1, 2, 3$, let $\gamma_i\in \{0, +, -\}$ denote the condition
\[ 
\begin{cases}
u_i=0\;\textrm{and}\; v_i=0, &\textrm{if $\gamma_i=0$,}\\
u_i\ne 0\;\textrm{and}\;  v_i=0, &\textrm{if $\gamma_i=+$,}\\
u_i= 0\;\textrm{and}\;  v_i\ne 0, &\textrm{if $\gamma_i=-$.}
\end{cases}
\]
If one of $\gamma_1, \gamma_2, \gamma_3$ is $0$ and the other two are not, 
then the conditions $\gamma_1$, $\gamma_2$, $\gamma_3$ with $w=0$ determine 
a single point $p_{\gamma_1\gamma_2\gamma_3}$ on $\bar{X}_s\cap H_{\infty}$. 
For example, we have 
\[
p_{+-0}=(0: 1:0:0:0:-1:0).
\]
The points $p_{\gamma_1\gamma_2\gamma_3}$ are ordinary nodes of $\barX_s$,
and 
the singular locus $\Sing \barX_s$ 
of $\barX_s$ consists of these $12$ points.
Let $L_{\gamma_1\gamma_2\gamma_3}$ denote the exceptional $(-2)$-curve
of the minimal desingularization $X_s\to \barX_s$ over $p_{\gamma_1\gamma_2\gamma_3}$.
If none of $\gamma_1, \gamma_2, \gamma_3$ is $0$,
then the conditions $\gamma_1$ and $\gamma_2$ and $\gamma_3$ with $w=0$ define
a line $\barL_{\gamma_1\gamma_2\gamma_3}$ on $\barX_s\cap H_{\infty}$.
 For example, we have 
\[
\barL_{+-+}=\set{(0: \lambda_1:0: \lambda_3: 0: \lambda_2:0)}{ \lambda_1+ \lambda_2+ \lambda_3=0}.
\]
Let $L_{\gamma_1\gamma_2\gamma_3}\subset X_s$ denote the strict transform of $\barL_{\gamma_1\gamma_2\gamma_3}$
in $X_s$.
Thus, we obtain $12+8$ smooth rational curves $L_{\gamma_1 \gamma_2 \gamma_3}$ on $X_s$.
\par
Let $\sigma, \sigma\inv\in \CC$ be the roots of the equation $\xi+1/\xi=s$.
For $k\in \{1,2,3\}$ and $\alpha, \beta \in \{+, -\}$,
we define the curve $M_{k\alpha\beta}$ on $X_s$ as follows.
Let $i, j$ be the indexes such that $\{i,j,k\}=\{1,2 ,3\}$.
The curve defined by 
\[
\xi_i+1/\xi_i+\xi_j+1/\xi_j=0
\]
 in $\AA^2$ with coordinates $(\xi_i, \xi_j)$ is a union of two rational curves $\xi_i+\xi_j=0$ and $\xi_i\xi_j+1=0$.
Let $M_{k\alpha\beta}\spcirc$
be the curve on 
$X_s\spcirc \subset \AA^3$ defined by 
\[
\begin{cases}
\xi_k= \sigma &\textrm{if $\alpha=+$}, \\
\xi_k= \sigma\inv &\textrm{if $\alpha=-$}
\end{cases},
\qquad\textrm{and}\qquad 
\begin{cases}
\xi_i+\xi_j= 0 &\textrm{if $\beta=+$}, \\
\xi_i\xi_j+1=0 &\textrm{if $\beta=-$},
\end{cases}
\]
and let $M_{k\alpha\beta}\subset X_s$ be the strict transform of the closure of $M_{k\alpha\beta}\spcirc$. 
Thus, we obtain $12$ smooth rational curves $M_{k\alpha\beta}$ on $X_s$.
\par
We now confirm 
the following results proved in Section 7 of~\cite{Peters1986} and~\cite{PetersStienstra1989} by direct computation.
\begin{lemma}\label{lem:intnumbsPS}
{\rm (1)}
The intersection numbers of these $20+12$ smooth rational curves $L_{\gamma_1 \gamma_2 \gamma_3}$ and $M_{k\alpha\beta}$
are as follows.
\begin{enumerate}[{\rm (i)}]
\item The dual graph of the curves $L_{\gamma_1 \gamma_2 \gamma_3}$ is 
shown in Figure~\ref{fig:Lcube}, which we refer to as the \emph{$L$-cube}.
\item The curves $M_{k\alpha\beta}$ intersect as follows:
\[
\intf{M_{k\alpha\beta}, M_{k\sprime\alpha\sprime\beta\sprime}}
=\begin{cases}
-2 & \textrm{if $k=k\sprime$, $\alpha=\alpha\sprime$, $\beta=\beta\sprime$}, \\
2 & \textrm{if $k=k\sprime$, $\alpha=\alpha\sprime$, $\beta\ne\beta\sprime$}, \\
0 & \textrm{if $k=k\sprime$, $\alpha\ne\alpha\sprime$, $\beta=\beta\sprime$}, \\
0 & \textrm{if $k=k\sprime$, $\alpha\ne\alpha\sprime$, $\beta\ne\beta\sprime$}, \\
1 & \textrm{if $k\ne k\sprime$, $\alpha=\alpha\sprime$, $\beta=\beta\sprime$}, \\
0 & \textrm{if $k\ne k\sprime$, $\alpha=\alpha\sprime$, $\beta\ne\beta\sprime$}, \\
0 & \textrm{if $k\ne k\sprime$, $\alpha\ne\alpha\sprime$, $\beta=\beta\sprime$}, \\
1 & \textrm{if $k\ne k\sprime$, $\alpha\ne\alpha\sprime$, $\beta\ne\beta\sprime$}. 
\end{cases}
\]
\item 
We have 
\[
\intf{L_{\gamma_1 \gamma_2 \gamma_3}, M_{k\alpha\beta}}
=\begin{cases}
1 & \textrm{if $\gamma_k=0$ and $\beta=\gamma_i\gamma_j$, where $\{i,j,k\}=\{1,2,3\}$,} \\
0 &\textrm{otherwise.}
\end{cases}
\]
\end{enumerate}
\par
{\rm (2)}
The classes of these $32$ smooth rational curves 
span the N\'eron--Severi lattice $\NS(X_s)$ of $X_s$,
which is of rank $19$ and with discriminant $-12$.
\par
{\rm (3)}
The lattice $\NS(X_s)$ is isomorphic to the lattice $M_6$ defined by~\eqref{eq:M6}.
\qed

\end{lemma}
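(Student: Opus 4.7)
The plan is to carry out the three parts by (a) direct local computation on the explicit projective model~\eqref{eq:wuuuvvv}, (b) assembling the resulting Gram matrix and extracting rank and discriminant, and (c) identifying the resulting abstract lattice with $M_6$ by matching invariants. Throughout, the very-general hypothesis on $s$ ensures that the Picard number is $19$ and that the transcendental lattice of $X_s$ has discriminant $12$ (as established in~\cite{Peters1986}), which will pin down $\NS(X_s)$ once the sublattice generated by our $32$ classes is computed.

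For part~(1), I would first verify~(i) by working directly with $\bar{X}_s\cap H_\infty$. A $\bar{L}_{\gamma_1\gamma_2\gamma_3}$ (with none of the $\gamma_i$ equal to $0$) is a line passing exactly through those three nodes $p_{\delta_1\delta_2\delta_3}$ obtained by changing one $\gamma_i$ to $0$. Hence, on $X_s$, each strict transform $L_{\gamma_1\gamma_2\gamma_3}$ meets precisely three exceptional curves $L_{\delta_1\delta_2\delta_3}$, each transversely with intersection number $1$, and its self-intersection is $-2$ after the blowups. Two distinct strict transforms of lines never share a smooth point of $\bar{X}_s$ (their lines meet only at nodes), so they are disjoint on $X_s$. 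This produces exactly the $L$-cube. For (ii), work in the affine chart: on $X_s^\circ$ the curves $M_{k\alpha\beta}^\circ$ are given by explicit binomial equations in $(\xi_1,\xi_2,\xi_3)$, so affine intersections reduce to solving those systems; the remaining contributions at infinity are computed by tracking where each $M_{k\alpha\beta}$ meets the boundary divisors. For (iii), determine which node $p_{\gamma_1\gamma_2\gamma_3}$ each $M_{k\alpha\beta}$ passes through (this happens only when $\gamma_k=0$, and the sign $\beta$ selects between the two components $\xi_i+\xi_j=0$ and $\xi_i\xi_j+1=0$), showing intersection $1$ with exactly the listed $L$-curves and $0$ elsewhere. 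All three parts of (1) are then finite direct checks using the defining equations.

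For part~(2), I would assemble the resulting $32\times 32$ Gram matrix from~(1) and compute its rank and the discriminant of the lattice $\Lambda\subseteq\NS(X_s)$ generated by the $32$ classes. One expects $\operatorname{rank}\Lambda=19$ (matching the known Picard number) and $|\operatorname{disc}\Lambda|=12$. Since the transcendental lattice of $X_s$ has discriminant $12$ by~\cite{Peters1986}, we have $|\operatorname{disc}\NS(X_s)|=12$ as well. Because $\Lambda\subseteq\NS(X_s)$ is a finite-index sublattice with $|\operatorname{disc}\Lambda|=[\NS(X_s):\Lambda]^2\cdot|\operatorname{disc}\NS(X_s)|$, the index must be $1$, i.e.\ $\Lambda=\NS(X_s)$. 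For part~(3), I would exhibit an explicit orthogonal decomposition inside $\Lambda$: pick two disjoint smooth rational curves whose sum with a third class gives an isotropic pair generating a copy of $U$, identify two disjoint $E_8$-configurations of $(-2)$-curves among the $L$- and $M$-curves (the $L$-cube is rich enough to provide such root subsystems), and check that the orthogonal complement of $U\oplus E_8(-1)^{\oplus 2}$ in $\Lambda$ is generated by a single vector of square $-12$. Matching signature and discriminant then gives the isometry $\NS(X_s)\cong M_6$.

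I expect the main obstacle to be the bookkeeping for the $L$-curves at infinity in~(1)(i): one must separate the contributions of the node blowups from the transverse meetings of lines, and confirm that the $(-2)$-self-intersection and each unit intersection are accounted for correctly. The $M$-curve calculations and the lattice-theoretic identification are more mechanical; the discriminant match with the transcendental lattice is what cleanly forces $\Lambda=\NS(X_s)$ rather than requiring a more delicate enlargement argument.
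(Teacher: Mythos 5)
Your proposal is correct and follows essentially the same route as the paper, which establishes (1) and (2) by direct computation on the explicit model (citing Peters and Peters--Stienstra) and proves (3) exactly as you describe: exhibiting a sub-configuration giving an elliptic fibration with two fibers of type $\mathrm{II}^*$, so that the unimodular sublattice $U\oplus E_8(-1)\oplus E_8(-1)$ splits off as a direct summand and the discriminant forces the complement to be $\angs{-12}$. The only point where you supply an argument the paper leaves implicit is the index-one step in (2), which you settle cleanly via the discriminant of the transcendental lattice from~\cite{Peters1986}; this is consistent with how the paper itself handles the analogous issue in the proof of Proposition~\ref{prop:isom}.
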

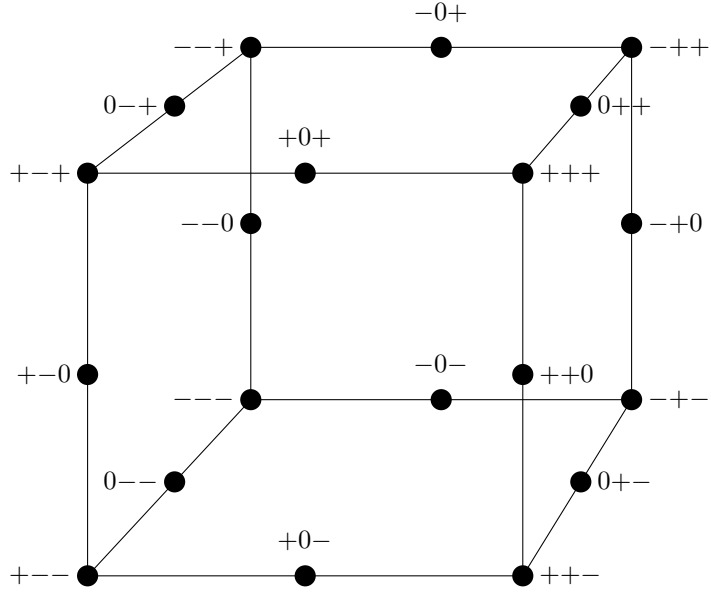
\begin{figure}
\begin{tikzpicture}[x=2.7cm, y=2.5cm]
%
\coordinate (mmm) at (-0.625, -0.5625);
\coordinate (mmz) at (-0.625, 0.375);
\coordinate (mmp) at (-0.625, 1.3125);
\coordinate (mzm) at (0.3125, -0.5625);
\coordinate (mzp) at (0.3125, 1.3125);
\coordinate (mpm) at (1.25, -0.5625);
\coordinate (mpz) at (1.25, 0.375);
\coordinate (mpp) at (1.25, 1.3125);
\coordinate (zmm) at (-1., -1.);
\coordinate (zmp) at (-1., 1.);
\coordinate (zpm) at (1., -1.);
\coordinate (zpp) at (1., 1.);
\coordinate (pmm) at (-1.4285714285714286, -1.5);
\coordinate (pmz) at (-1.4285714285714286, -0.42857142857142855);
\coordinate (pmp) at (-1.4285714285714286, 0.6428571428571429);
\coordinate (pzm) at (-0.35714285714285715, -1.5);
\coordinate (pzp) at (-0.35714285714285715, 0.6428571428571429);
\coordinate (ppm) at (0.7142857142857143, -1.5);
\coordinate (ppz) at (0.7142857142857143, -0.42857142857142855);
\coordinate (ppp) at (0.7142857142857143, 0.6428571428571429);
%
\fill (mmm) circle (4pt);
\fill (mmz) circle (4pt);
\fill (mmp) circle (4pt);
\fill (mzm) circle (4pt);
\fill (mzp) circle (4pt);
\fill (mpm) circle (4pt);
\fill (mpz) circle (4pt);
\fill (mpp) circle (4pt);
\fill (zmm) circle (4pt);
\fill (zmp) circle (4pt);
\fill (zpm) circle (4pt);
\fill (zpp) circle (4pt);
\fill (pmm) circle (4pt);
\fill (pmz) circle (4pt);
\fill (pmp) circle (4pt);
\fill (pzm) circle (4pt);
\fill (pzp) circle (4pt);
\fill (ppm) circle (4pt);
\fill (ppz) circle (4pt);
\fill (ppp) circle (4pt);
\draw (mmz) -- (mmm);
\draw (mmp) -- (mmz);
\draw (mzm) -- (mmm);
\draw (mzp) -- (mmp);
\draw (mpm) -- (mzm);
\draw (mpz) -- (mpm);
\draw (mpp) -- (mzp);
\draw (mpp) -- (mpz);
\draw (zmm) -- (mmm);
\draw (zmp) -- (mmp);
\draw (zpm) -- (mpm);
\draw (zpp) -- (mpp);
\draw (pmm) -- (zmm);
\draw (pmz) -- (pmm);
\draw (pmp) -- (zmp);
\draw (pmp) -- (pmz);
\draw (pzm) -- (pmm);
\draw (pzp) -- (pmp);
\draw (ppm) -- (zpm);
\draw (ppm) -- (pzm);
\draw (ppz) -- (ppm);
\draw (ppp) -- (zpp);
\draw (ppp) -- (pzp);
\draw (ppp) -- (ppz);
%
\node [left] at (mmm) {$\ordminus \ordminus \ordminus $\;};
\node [left] at (mmz) {$\ordminus \ordminus 0 $\;};
\node [left] at (mmp) {$\ordminus \ordminus \ordplus $\;};
\node [above] at (mzm) {$\ordminus 0 \ordminus $\mystrutd{7pt}};
\node [above] at (mzp) {$\ordminus 0 \ordplus $\mystrutd{7pt}};
\node [right] at (mpm) {\;$\ordminus \ordplus \ordminus $};
\node [right] at (mpz) {\;$\ordminus \ordplus 0 $};
\node [right] at (mpp) {\;$\ordminus \ordplus \ordplus $};
\node [left] at (zmm) {$0 \ordminus \ordminus $\;};
\node [left] at (zmp) {$0 \ordminus \ordplus $\;};
\node [right] at (zpm) {\;$0 \ordplus \ordminus $};
\node [right] at (zpp) {\;$0 \ordplus \ordplus $};
\node [left] at (pmm) {$\ordplus \ordminus \ordminus $\;};
\node [left] at (pmz) {$\ordplus \ordminus 0 $\;};
\node [left] at (pmp) {$\ordplus \ordminus \ordplus $\;};
\node [above] at (pzm) {$\ordplus 0 \ordminus $\mystrutd{7pt}};
\node [above] at (pzp) {$\ordplus 0 \ordplus $\mystrutd{7pt}};
\node [right] at (ppm) {\;$\ordplus \ordplus \ordminus $};
\node [right] at (ppz) {\;$\ordplus \ordplus 0 $};
\node [right] at (ppp) {\;$\ordplus \ordplus \ordplus $};
\end{tikzpicture}
\caption{Dual graph of the curves $L_{\gamma_1 \gamma_2 \gamma_3}$ ($L$-cube)}\label{fig:Lcube}
\end{figure}
To prove the assertion (3), we use the following Jacobian fibration of $X_s$.
The configuration of the $32$ smooth rational curves described in Lemma~\ref{lem:intnumbsPS} contains a sub-configuration 
shown in Figure~\ref{fig:2E8}.
Hence $X_s$ has an elliptic fibration with a section $M_{2\ordplus\ordplus}$ and two singular fibers of type $\mathrm{II}^*$.
Consequently, $\NS(X_s)$ contains a rank-$18$ sublattice isomorphic to $U\oplus E_8(-1)\oplus E_8(-1)$.
Since this sublattice is unimodular, it must be a direct summand of $\NS(X_s)$.
Comparing the discriminant, we see that $\NS(X_s)$ is isomorphic to $M_6$.
\begin{figure}
\begin{tikzpicture}[x=1.4cm, y=1cm]
{\small
%
\coordinate (n1) at (2 ,2);
\coordinate (n2) at (0 ,1);
\coordinate (n3) at (1 ,1);
\coordinate (n4) at (2 ,1);
\coordinate (n5) at (3 ,1);
\coordinate (n6) at (4 ,1);
\coordinate (n7) at (5 ,1);
\coordinate (n8) at (6 ,1);
\coordinate (n9) at (7 ,1);
\coordinate (n10) at (2 ,-2);
\coordinate (n11) at (0 ,-1);
\coordinate (n12) at (1 ,-1);
\coordinate (n13) at (2 ,-1);
\coordinate (n14) at (3 ,-1);
\coordinate (n15) at (4 ,-1);
\coordinate (n16) at (5 ,-1);
\coordinate (n17) at (6 ,-1);
\coordinate (n18) at (7 ,-1);
\coordinate (n19) at (8 ,0);
\draw (n1) -- (n4);
\draw (n2) -- (n3);
\draw (n3) -- (n4);
\draw (n4) -- (n5);
\draw (n5) -- (n6);
\draw (n6) -- (n7);
\draw (n7) -- (n8);
\draw (n8) -- (n9);
\draw (n9) -- (n19);
\draw (n10) -- (n13);
\draw (n11) -- (n12);
\draw (n12) -- (n13);
\draw (n13) -- (n14);
\draw (n14) -- (n15);
\draw (n15) -- (n16);
\draw (n16) -- (n17);
\draw (n17) -- (n18);
\draw (n18) -- (n19);
\filldraw [fill=white] (n1) circle [radius=3pt];
\filldraw [fill=white] (n2) circle [radius=3pt];
\filldraw [fill=white] (n3) circle [radius=3pt];
\filldraw [fill=white] (n4) circle [radius=3pt];
\filldraw [fill=white] (n5) circle [radius=3pt];
\filldraw [fill=white] (n6) circle [radius=3pt];
\filldraw [fill=white] (n7) circle [radius=3pt];
\filldraw [fill=white] (n8) circle [radius=3pt];
\filldraw [fill=white] (n9) circle [radius=3pt];
\filldraw [fill=white] (n10) circle [radius=3pt];
\filldraw [fill=white] (n11) circle [radius=3pt];
\filldraw [fill=white] (n12) circle [radius=3pt];
\filldraw [fill=white] (n13) circle [radius=3pt];
\filldraw [fill=white] (n14) circle [radius=3pt];
\filldraw [fill=white] (n15) circle [radius=3pt];
\filldraw [fill=white] (n16) circle [radius=3pt];
\filldraw [fill=white] (n17) circle [radius=3pt];
\filldraw [fill=white] (n18) circle [radius=3pt];
\filldraw [fill=white] (n19) circle [radius=3pt];
\node at (n1) [left] {$L\sb{\ordminus \ordminus \ordminus }$\;};
\node at (n2) [below] {$L\sb{\ordminus 0 \ordplus }$\mystruth{10pt}};
\node at (n3) [below] {$L\sb{\ordminus \ordminus \ordplus }$\mystruth{10pt}};
\node at (n4) [below] {$L\sb{\ordminus \ordminus 0 }$\mystruth{10pt}};
\node at (n5) [above] {$M\sb{3\ordminus \ordplus }$\mystrutd{6pt}};
\node at (n6) [above] {$L\sb{\ordplus \ordplus 0 }$\mystrutd{6pt}};
\node at (n7) [above] {$L\sb{\ordplus \ordplus \ordplus }$\mystrutd{6pt}};
\node at (n8) [above] {$L\sb{0 \ordplus \ordplus }$\mystrutd{6pt}};
\node at (n9) [above] {$M\sb{1\ordplus \ordplus }$\mystrutd{6pt}};
\node at (n10) [left] {$L\sb{\ordplus \ordminus \ordplus }$\;};
\node at (n11) [above] {$L\sb{\ordplus 0 \ordminus }$\mystrutd{6pt}};
\node at (n12) [above] {$L\sb{\ordplus \ordminus \ordminus }$\mystrutd{6pt}};
\node at (n13) [above] {$L\sb{\ordplus \ordminus 0 }$\mystrutd{6pt}};
\node at (n14) [below] {$M\sb{3\ordplus \ordminus }$\mystruth{10pt}};
\node at (n15) [below] {$L\sb{\ordminus \ordplus 0 }$\mystruth{10pt}};
\node at (n16) [below] {$L\sb{\ordminus \ordplus \ordminus }$\mystruth{10pt}};
\node at (n17) [below] {$L\sb{0 \ordplus \ordminus }$\mystruth{10pt}};
\node at (n18) [below] {$M\sb{1\ordminus \ordminus }$\mystruth{10pt}};
\node at (n19) [right] {\;$M\sb{2\ordplus \ordplus }$};
}
\end{tikzpicture}
\caption{Sub-configuration containing $2\,\mathrm{II}^*$}\label{fig:2E8}
\end{figure}
\par
Thus, $X_s$ can be regarded as an $M_6$-lattice polarized $K3$ surface in the sense of 
Dolgachev~\cite{DolgachevLatticePolarizedK3}.
According to~\cite{DolgachevLatticePolarizedK3},
the isomorphism classes of 
$M_6$-lattice polarized $K3$ surfaces
are parameterized by an irreducible curve,
and our surface $X_s$ corresponds to a geometric generic point of this curve.
\subsection{The Mukai--Ohashi quartic}\label{subsec:32MO}
We review the paper~\cite{MukaiOhashi2015} by Mukai and Ohashi.
The results in this section are not directly related to the main line of argument of the paper.
\par
Let $Y_t$ be the quartic surface in $\PP^3$ defined by the quartic polynomial~\eqref{eq:SymQ},
where the parameter $t$ is assumed to be very general.
For $i\in \{1,\dots, 4\}$,
let $H_i\subset \PP^3$ denote the plane defined by $x_i=0$,
and let $p_i$ denote the point such that $\{p_i\}=H_j\cap H_k\cap H_{l}$,
where $\{i,j,k,l\}=\{1, \dots, 4\}$.
Then the singular locus $\Sing Y_t$ of $Y_t$ consists of four points $p_1, \dots, p_4$,
each of which is a rational double point of type $D_4$.
Let $(\PP^3)\sprime\to \PP^3$ be the blowing up at the points $p_1, \dots, p_4$.
We denote by 
$Y_t\sprime\subset (\PP^3)\sprime$ the strict transform of $Y_t$, 
and by $E_i\subset (\PP^3)\sprime$ 
the exceptional divisor over $p_i$.
We have homogeneous coordinates $(u_{ij}: u_{ik}:u_{il})$ of $E_i\cong \PP^2$,
where $\{i, j, k, l\}=\{1, \dots, 4\}$, 
such that the strict transform of the plane in $\PP^3$ 
defined by $a_j x_j+a_k x_k+a_l x_l=0$ intersects $E_i$ along the line
$a_ju_{ij}+a_k u_{ik}+a_l u_{il}=0$.
We consider the line 
\[
\Lambda_i\;\;\colon\;\; u_{ij}+u_{ik}+u_{il}=0
\]
on $E_i$.
Then the scheme-theoretic intersection of $Y_t\sprime$ and $E_i$ is the double line $2\Lambda_i$.
For $\nu \in \{j, k, l\}$, 
let $q_{i\nu}$ be the intersection point in $E_i\cong \PP^2$ of the line $\Lambda_i$ and the line defined by $u_{i\nu}=0$.
Then 
the singular points of $Y_t\sprime$ located on $E_i$ are precisely the three points 
$q_{i\nu}$,
forming a total of $3\times 4$ ordinary nodes of $Y_t\sprime$.
Let $(\PP^3)\spprime\to (\PP^3)\sprime$ be the blowing up at these nodes $q_{i\nu}$,
and let $Y_t\spprime\subset (\PP^3)\spprime$ be the strict transform of $Y_t\sprime$.
Then $Y_t\spprime$ is smooth.
Let $P_i\subset Y_t\spprime$ be the strict transform of $\Lambda_i$,
and let $Q_{i\nu}\subset Y_t\spprime$
be the exceptional curve over the ordinary node $q_{i\nu}\in Y_t\sprime$.
Then, for each $i$,  
the smooth rational curves $P_i$ and $Q_{i\nu}$ ($\nu\in \{j,k,l\}$) form a dual graph
isomorphic to the Dynkin diagram of type $D_4$ with $P_i$ being the central node.
\par
The scheme-theoretic intersection of $Y_t$ and $H_{\lambda}=\{x_{\lambda}=0\}$ is a double conic $2T_{\lambda}$,
where $T_{\lambda}$ is a smooth conic on $H_{\lambda}$.
Let $T_{\lambda}\sprime\subset Y_t\sprime$ and $T_{\lambda}\spprime\subset Y_t\spprime$ be the strict transforms of 
$T_{\lambda}$ in $Y_t\sprime$ and in $Y_t\spprime$, respectively.
Suppose that $i\ne \lambda$.
Then $T_{\lambda}\sprime$ intersects $E_i$ at the point $q_{i\lambda}$, and 
the curve $T_{\lambda}\spprime$ intersects $Q_{i\lambda}$,
but is disjoint from the other three component $P_{i}$ and $Q_{i\mu}$, $Q_{i\nu}$
of the $D_4$-configuration over $p_i$,
where $\{i, \lambda, \mu, \nu\}=\{1, \dots, 4\}$.
\par
Let $\tau$ and $1/\tau$ be 
 the two roots of the equation
$(u-1)^2-tu=0$
in variable $u$.
Let $\mu, \nu\in \{1, \dots,4\}$ be distinct indexes,
and let $H_{\mu\nu}\subset\PP^3$ be the plane in $\PP^3$ defined by $x_{\mu}+x_{\nu}=0$.
We put $\{i, j\}=\{1, \dots, 4\}\setminus \{\mu, \nu\}$.
Then $H_{\mu\nu}\cap Y_t$ is a union of two conics
\[
C_{\mu\nu, \rho} \quad \colon\quad x_{\mu} x_{\nu}+\rho\, x_i x_j=x_{\mu}+x_{\nu}=0, 
\]
where $\rho\in \{\tau, 1/\tau\}$.
Let $C_{\mu\nu, \rho}\sprime\subset Y_t\sprime$ and
$C_{\mu\nu, \rho}\spprime\subset Y_t\spprime$ be
 the strict transforms of $C_{\mu\nu, \rho}$ in $Y_t\sprime$ and in $Y_t\spprime$, respectively.
 Note that, since the strict transform $H_{\mu\nu}\sprime\subset (\PP^3)\sprime$ of $H_{\mu\nu}$
 intersects the exceptional surface $E_i$ along the line $u_{i\mu}+u_{i\nu}=0$,
 the curves $C_{\mu\nu, \tau}\sprime$ and $C_{\mu\nu,1/\tau} \sprime$ pass through $q_{ij}$,
because $\Lambda_i$ is defined by $u_{ij}+u_{i\mu}+u_{i\nu}=0$.
Thus, the curves $C_{\mu\nu, \tau}\spprime$ and $ C_{\mu\nu, 1/\tau} \spprime$ intersect $Q_{ij}$.
 \par
We can now establish the following result by direct computation.
\begin{lemma}\label{lem:intnumbsMO}
The intersection numbers of the $32$ smooth rational curves 
$P_i$, $Q_{ij}$, $T_{\nu}\spprime$, and $C_{\mu\nu,\rho}\spprime$ on $Y_t\spprime$
are as follows.
\begin{enumerate}[{\rm (i)}]
\item The dual graph of the curves $P_i$, $Q_{ij}$, $T\spprime_{\nu}$ is 
shown in Figure~\ref{fig:PQTcube},
where the thick edges indicate the four $D_4$-configurations over the singular points of $Y_t$.
\item The intersection numbers of the curves $C_{\mu\nu, \rho}\spprime$,
where $\mu, \nu\in \{1, \dots, 4\}$ with $\mu\ne \nu$ and $\rho\in \{\tau, 1/\tau\}$,
are as follows.
\par
If $\{\mu, \nu\}=\{\mu\sprime, \nu\sprime\}$, then 
\[
\intf{C_{\mu\nu, \rho}\spprime, C_{\mu\sprime\nu\sprime, \rho\sprime}\spprime}
=\begin{cases}
-2 & \textrm{if $\rho=\rho\sprime$}, \\
0 & \textrm{if $\rho\ne\rho\sprime$}.
\end{cases}
\]
If $\{\mu, \nu\}\cap \{\mu\sprime, \nu\sprime\}$ consists of a single element, 
then 
\[
\intf{C_{\mu\nu, \rho}\spprime, C_{\mu\sprime\nu\sprime , \rho\sprime}\spprime}
=\begin{cases}
1& \textrm{if $\rho=\rho\sprime$}, \\
0 & \textrm{if $\rho\ne\rho\sprime$}.
\end{cases}
\]
If $\{\mu, \nu\}\cap \{\mu\sprime, \nu\sprime\}=\emptyset$, 
then 
 \[
\intf{C_{\mu\nu, \rho}\spprime, C_{\mu\sprime\nu\sprime , \rho\sprime}\spprime}
=\begin{cases}
0& \textrm{if $\rho= \rho\sprime$}, \\
2 & \textrm{if $\rho\ne\rho\sprime$}.
\end{cases}
\]
\item 
The curve $C_{\mu\nu, \rho}\spprime$ is disjoint from $P_i$, $T\spprime_{\nu}$, and
we have 
\[
\intf{C_{\mu\nu, \rho}\spprime, Q_{ij}}
=\begin{cases}
1 & \textrm{if $\{\mu, \nu, i,j\}=\{1,2,3, 4\}$,} \\
0 &\textrm{otherwise.}
\end{cases}
\]
\end{enumerate}
\qed
\end{lemma}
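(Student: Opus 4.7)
The plan is to verify each assertion of the lemma by direct local computation in the two-step blow-up $(\PP^3)\spprime\to(\PP^3)\sprime\to\PP^3$, using the explicit defining equations of the $32$ curves. Since every curve involved is smooth rational on the smooth $K3$ surface $Y_t\spprime$, its self-intersection equals $-2$, and the intersection number of two distinct such curves equals the length of their scheme-theoretic intersection on $Y_t\spprime$, which can be read off from local equations in the relevant affine charts of the resolution.

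For part (i), fix an index $i$. The exceptional $E_i\cong\PP^2$ of the first blow-up meets $Y_t\sprime$ in the double line $2\Lambda_i$, and the three $A_1$-nodes $q_{i\nu}\in\Lambda_i$ are resolved by the second blow-up, producing smooth rational exceptional curves $Q_{i\nu}$ that each meet the strict transform $P_i$ of $\Lambda_i$ transversely in a single point and are pairwise disjoint; this gives the $D_4$ configuration centered at $P_i$. For $\nu\ne i$, the strict transform of $H_\nu$ meets $E_i$ along the line $\{u_{i\nu}=0\}$, which cuts $\Lambda_i$ exactly at $q_{i\nu}$; hence $T_\nu\sprime$ passes through $q_{i\nu}$ and through no other node of $\Lambda_i$, giving $T_\nu\spprime\cdot Q_{i\nu}=1$ together with disjointness from $P_i$ and from $Q_{i\mu}$ for $\mu\ne\nu$. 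For distinct $\nu,\nu\sprime$, the set $T_\nu\cap T_{\nu\sprime}$ in $\PP^3$ consists of exactly two of the $p_i$; at each such $p_i$, the strict transforms reach the distinct nodes $q_{i\nu}$ and $q_{i\nu\sprime}$ on $E_i$, so $T_\nu\spprime$ and $T_{\nu\sprime}\spprime$ are disjoint on $Y_t\spprime$.

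For part (ii), the intersection $C_{\mu\nu,\rho}\cap C_{\mu\sprime\nu\sprime,\rho\sprime}$ in $\PP^3$ is contained in the line $H_{\mu\nu}\cap H_{\mu\sprime\nu\sprime}$, and the argument splits by $k:=|\{\mu,\nu\}\cap\{\mu\sprime,\nu\sprime\}|$. When $k=2$ and $\rho\ne\rho\sprime$, the two conics meet on $\{x_\mu=x_\nu=0\}$ only at the two blown-up points $p_i,p_j$, and their strict transforms hit distinct nodes on each of $E_i,E_j$, giving $0$. When $k=1$, parametrizing the common line shows the two conics share the single blown-up point $p_l$ (where $l$ is the fourth index) together with one additional smooth point of $Y_t$ if and only if $\rho=\rho\sprime$; since the strict transforms are already disjoint above $p_l$, this yields the values $1$ and $0$ respectively. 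When $k=0$, the common line avoids all $p_i$, and substituting its parametrization into the two conic equations yields a homogeneous relation with a nontrivial solution precisely when $\rho\rho\sprime=1$; since $\tau$ and $1/\tau$ are the two roots of a quadratic with constant term $1$, we have $\tau\cdot(1/\tau)=1$, so this occurs exactly when $\rho\ne\rho\sprime$, producing two smooth intersection points and hence intersection number $2$ in that case and $0$ otherwise.

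For part (iii), the curve $C_{\mu\nu,\rho}\subset H_{\mu\nu}$ passes only through $p_i$ and $p_j$, and on each of $E_i,E_j$ it meets $\Lambda_i$ (resp.\ $\Lambda_j$) at the unique point $q_{ij}$ (resp.\ $q_{ji}$), because $H_{\mu\nu}$ cuts $E_i$ along $u_{i\mu}+u_{i\nu}=0$. The identity $C_{\mu\nu,\rho}\spprime\cdot Q_{ij}=1$ and disjointness from $P_i$ both reduce to the observation that $C_{\mu\nu,\rho}\sprime$ approaches $q_{ij}$ from outside $E_i$, while $P_i$ is contained in $E_i$: the two curves have distinct tangent directions at $q_{ij}$, and therefore hit distinct points of the exceptional $Q_{ij}$. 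For $T_\lambda\spprime$ with $\lambda\in\{\mu,\nu\}$, the $\PP^3$-intersections localize to $\{p_i,p_j\}$ and on each of $E_i,E_j$ the two curves hit different nodes of $\Lambda_i$, giving disjointness. The only delicate case is $\lambda\in\{i,j\}$, where $C_{\mu\nu,\rho}\sprime$ and $T_\lambda\sprime$ both pass through a common $A_1$-node of $Y_t\sprime$; here a first-order expansion in local blow-up coordinates is needed to verify that the two curves approach this node along distinct points of the exceptional conic $Q_{ji}$, so that their strict transforms remain disjoint on $Y_t\spprime$. This last tangent-direction computation at a common $A_1$-singularity is the main technical obstacle, since set-theoretic coincidence on $Y_t\sprime$ does not imply intersection on $Y_t\spprime$; once it is carried out, the remaining assertions all reduce to elementary analysis in $\PP^3$ and its blow-up charts.
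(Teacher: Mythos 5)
Your overall strategy --- direct local computation in the two-step resolution $(\PP^3)\spprime\to(\PP^3)\sprime\to\PP^3$ --- is exactly what the paper intends: the lemma is asserted ``by direct computation,'' with the key geometric inputs (which node of $E_i$ each strict transform passes through) recorded in the surrounding text. Most of your case analysis is correct, and you rightly isolate the tangent-direction computation at a shared $A_1$-node as the genuinely delicate point.

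However, your treatment of the case $\{\mu,\nu\}=\{\mu\sprime,\nu\sprime\}$ with $\rho\ne\rho\sprime$ contains a factual error. You claim that the strict transforms of $C_{\mu\nu,\tau}$ and $C_{\mu\nu,1/\tau}$ ``hit distinct nodes on each of $E_i$, $E_j$.'' They do not: both conics lie in $H_{\mu\nu}$ and on $Y_t$, so both strict transforms meet $E_i$ at the single point $\{u_{i\mu}+u_{i\nu}=0\}\cap\Lambda_i=q_{ij}$ --- the paper states this explicitly, and it is also forced by part (iii) of the lemma, which asserts that \emph{both} curves meet $Q_{ij}$. As written, your step would fail. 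The correct argument is of the same kind as the one you defer at the end of part (iii): in the affine chart at $p_i$ with $x_i=1$, the curve $C_{\mu\nu,\rho}$ is parametrized by $(x_j,x_\mu,x_\nu)=(\sigma^2/\rho,\sigma,-\sigma)$, so in blow-up coordinates adapted to the node $q_{ij}$ its strict transform approaches $q_{ij}$ along a direction of the form $(1,0,\rho)$, which lies on the exceptional conic $Q_{ij}$ (the tangent cone of the $A_1$-point) and depends on $\rho$. Since $\tau\ne 1/\tau$, the two curves meet $Q_{ij}$ at distinct points and therefore separate after the second blow-up, giving $\intf{C_{\mu\nu,\tau}\spprime,\,C_{\mu\nu,1/\tau}\spprime}=0$ for the right reason. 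You should carry out this first-order expansion once and apply it uniformly to all the places where two of the $32$ curves pass through the same node $q_{ij}$: the pair $C_{\mu\nu,\tau}\spprime$, $C_{\mu\nu,1/\tau}\spprime$; the pair $C_{\mu\nu,\rho}\spprime$, $P_i$; and the pair $C_{\mu\nu,\rho}\spprime$, $T_\lambda\spprime$ with $\lambda\in\{i,j\}$. With that single computation supplied, the rest of your argument goes through.
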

\begin{figure}
\begin{tikzpicture}[x=2.7cm, y=2.5cm]
%
\coordinate (mmm) at (-0.625, -0.5625);
\coordinate (mmz) at (-0.625, 0.375);
\coordinate (mmp) at (-0.625, 1.3125);
\coordinate (mzm) at (0.3125, -0.5625);
\coordinate (mzp) at (0.3125, 1.3125);
\coordinate (mpm) at (1.25, -0.5625);
\coordinate (mpz) at (1.25, 0.375);
\coordinate (mpp) at (1.25, 1.3125);
\coordinate (zmm) at (-1., -1.);
\coordinate (zmp) at (-1., 1.);
\coordinate (zpm) at (1., -1.);
\coordinate (zpp) at (1., 1.);
\coordinate (pmm) at (-1.4285714285714286, -1.5);
\coordinate (pmz) at (-1.4285714285714286, -0.42857142857142855);
\coordinate (pmp) at (-1.4285714285714286, 0.6428571428571429);
\coordinate (pzm) at (-0.35714285714285715, -1.5);
\coordinate (pzp) at (-0.35714285714285715, 0.6428571428571429);
\coordinate (ppm) at (0.7142857142857143, -1.5);
\coordinate (ppz) at (0.7142857142857143, -0.42857142857142855);
\coordinate (ppp) at (0.7142857142857143, 0.6428571428571429);
%
\draw [very thick] (mmz) -- (mmm);
\draw (mmp) -- (mmz);
\draw [very thick] (mzm) -- (mmm);
\draw (mzp) -- (mmp);
\draw (mpm) -- (mzm);
\draw (mpz) -- (mpm);
\draw [very thick] (mpp) -- (mzp);
\draw [very thick] (mpp) -- (mpz);
\draw [very thick] (zmm) -- (mmm);
\draw (zmp) -- (mmp);
\draw (zpm) -- (mpm);
\draw [very thick] (zpp) -- (mpp);
\draw (pmm) -- (zmm);
\draw (pmz) -- (pmm);
\draw [very thick] (pmp) -- (zmp);
\draw [very thick] (pmp) -- (pmz);
\draw (pzm) -- (pmm);
\draw [very thick] (pzp) -- (pmp);
\draw [very thick] (ppm) -- (zpm);
\draw [very thick] (ppm) -- (pzm);
\draw [very thick] (ppz) -- (ppm);
\draw (ppp) -- (zpp);
\draw (ppp) -- (pzp);
\draw (ppp) -- (ppz);
\fill (mmm) circle (4pt);
\fill (mmz) circle (4pt);
\node at (mmp) [fill=white, draw, circle] {};
\fill (mzm) circle (4pt);
\fill (mzp) circle (4pt);
\node at (mpm) [fill=white, draw, circle] {};
\fill (mpz) circle (4pt);
\fill (mpp) circle (4pt);
\fill (zmm) circle (4pt);
\fill (zmp) circle (4pt);
\fill (zpm) circle (4pt);
\fill (zpp) circle (4pt);
\node at (pmm) [fill=white, draw, circle] {};
\fill (pmz) circle (4pt);
\fill (pmp) circle (4pt);
\fill (pzm) circle (4pt);
\fill (pzp) circle (4pt);
\fill (ppm) circle (4pt);
\fill (ppz) circle (4pt);
\node at (ppp) [fill=white, draw, circle] {};
%
\node [left] at (mmm) {$P_{3}$\;};
\node [left] at (mmz) {$Q_{34}$\;};
\node [left] at (mmp) {$T\spprime_{4}$\;};
\node [above] at (mzm) {$Q_{31}$\mystrutd{7pt}};
\node [above] at (mzp) {$Q_{24}$\mystrutd{7pt}};
\node [right] at (mpm) {\;$T\spprime_{1}$};
\node [right] at (mpz) {\;$Q_{21}$};
\node [right] at (mpp) {\;$P_{2}$};
\node [left] at (zmm) {$Q_{32}$\;};
\node [left] at (zmp) {$Q_{14}$\;\;};
\node [right] at (zpm) {\;$Q_{41}$};
\node [left] at (zpp) {$Q_{23}$\;\;};
\node [left] at (pmm) {$T\spprime_{2}$\;};
\node [left] at (pmz) {$Q_{12}$\;};
\node [left] at (pmp) {$P_{1}$\;};
\node [above] at (pzm) {$Q_{42}$\mystrutd{7pt}};
\node [above] at (pzp) {$Q_{13}$\mystrutd{7pt}};
\node [right] at (ppm) {\;$P_{4}$};
\node [right] at (ppz) {\;$Q_{43}$};
\node [right] at (ppp) {\;$T\spprime_{3}$};
\end{tikzpicture}
\caption{Dual graph of the curves $P_i$, $Q_{ij}$, $T\spprime_{\nu}$}\label{fig:PQTcube}
\end{figure}
As noted in Remark~\ref{rem:informed},
the following result was known to the authors of~\cite{MukaiOhashi2015}.
\begin{proposition}\label{prop:isom}
There exists a parameter $t(s)\in \CC$ 
such that the $K3$ surfaces $X_s$ and $Y_{t(s)}\spprime$ are isomorphic.
\end{proposition}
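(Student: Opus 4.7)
The plan is to identify both $X_s$ and $Y_t''$ as generic $M_6$-lattice polarized $K3$ surfaces, and then to apply Dolgachev's one-dimensional classification of such surfaces together with the Torelli theorem. To this end, I first verify that $Y_t''$ carries an $M_6$-polarization for very general $t$. By inspection of Figure~\ref{fig:PQTcube}, the configuration of the $P_i$, $Q_{ij}$, $T_\nu''$, together with some of the $C_{\mu\nu,\rho}''$ from Lemma~\ref{lem:intnumbsMO}, contains a sub-configuration of the same shape as the one shown in Figure~\ref{fig:2E8}: two $\widetilde{E}_8$ diagrams joined by an extra section. Thus $Y_t''$ admits an elliptic fibration with two $II^*$ fibers and a section, which, exactly as in the proof of Lemma~\ref{lem:intnumbsPS}(3), yields a unimodular direct summand $U \oplus E_8(-1) \oplus E_8(-1)$ of $NS(Y_t'')$. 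Adjoining one more of the conics $C_{\mu\nu,\rho}''$ and computing the discriminant of the resulting rank-$19$ sublattice from the Gram matrix recorded in Lemma~\ref{lem:intnumbsMO} gives $-12$. For very general $t$, the Picard number of $Y_t''$ is at most $19$, so this sublattice is all of $NS(Y_t'')$, and hence $NS(Y_t'') \cong M_6$.

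Now the moduli argument finishes the proof. By Dolgachev's theorem cited at the end of Section~\ref{subsec:32PS}, the coarse moduli space $\mathcal{M}_{M_6}$ of $M_6$-polarized $K3$ surfaces is an irreducible algebraic curve. The assignments $s \mapsto [X_s]$ and $t \mapsto [Y_t'']$ define non-constant morphisms from the parameter line into $\mathcal{M}_{M_6}$ (non-constant because, for example, the periods computed by Peters and Stienstra vary with $s$, and a parallel variation holds for $t$), and are therefore dominant. Consequently, for very general $s$ there exists $t = t(s)$ with $[X_s] = [Y_{t(s)}'']$ in $\mathcal{M}_{M_6}$; by the Torelli theorem for lattice-polarized $K3$ surfaces, this period equality lifts to an isomorphism of the underlying $K3$ surfaces $X_s \cong Y_{t(s)}''$.

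The main technical issue is verifying in Step~1 that $NS(Y_t'')$ is exactly $M_6$ and not a proper overlattice. The cleanest route is computational: assemble the $32 \times 32$ Gram matrix prescribed by Lemma~\ref{lem:intnumbsMO}, confirm that its rank equals $19$, and read off the discriminant. A more geometric alternative, which would also yield an explicit formula for $t(s)$, is to bypass the moduli argument entirely by writing down a birational map directly --- for instance, by matching the elliptic fibration on $X_s$ induced by the coordinate $\xi_3$ with the fibration on $Y_t''$ defined by the pencil of planes through a line $P_i$, and identifying the parameter from their common $j$-invariants or the positions of their reducible fibers.
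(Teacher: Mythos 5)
Your overall strategy is essentially the paper's: pin down $\NS(Y_t\spprime)$ as $M_6$ using the configuration of the $32$ curves, and then match the two one-parameter families of $M_6$-polarized $K3$ surfaces via the Torelli theorem. The paper organizes the lattice step a little differently --- it writes down an explicit intersection-preserving bijection between the $32$ curves of Lemma~\ref{lem:intnumbsPS} and those of Lemma~\ref{lem:intnumbsMO} (matching the two cubes and using Table~\ref{table:bij12curvess}), so that $\NS(Y_t\spprime)$ visibly contains a finite-index copy of $\NS(X_s)$ --- and it phrases the endgame through Peters' comparison of transcendental lattices rather than through Dolgachev's irreducible moduli curve; but these are differences of packaging, not of substance, and your dominance/irreducibility argument is a legitimate way to produce $t(s)$.

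The one genuine hole is the step you yourself flag: ruling out that $\NS(Y_t\spprime)$ is a \emph{proper overlattice} of the rank-$19$, discriminant-$(-12)$ sublattice spanned by the curves. Your proposed fix --- assemble the $32\times 32$ Gram matrix, check that its rank is $19$, and ``read off the discriminant'' --- only recomputes the discriminant of the sublattice generated by the curves; it cannot detect whether $\NS(Y_t\spprime)$ strictly contains that sublattice with finite index (in which case its discriminant would be $-12/k^{2}$ for some $k>1$, the embedding $M_6\hookrightarrow\NS(Y_t\spprime)$ would fail to be primitive, and the period point would not lie in the $M_6$-polarized moduli curve at all, breaking your dominance argument). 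What is needed is that $M_6$ admits no nontrivial even overlattice, equivalently that the discriminant form $(\ZZ/12\ZZ,\, -1/12 \bmod 2\ZZ)$ has no nonzero isotropic subgroup; this is a one-line check (the elements of order $2$ and $3$ take the values $1$ and $2/3$ modulo $2\ZZ$), and it is exactly the fact the paper imports from the proof of Proposition~7.1.1 of Peters. With that sentence added, your proof closes.
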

\begin{proof}
The $32$ smooth rational curves 
in Lemma~\ref{lem:intnumbsPS} and those 
in Lemma~\ref{lem:intnumbsMO}
have the same configuration.
Indeed, 
a bijection between these two sets of $32$ curves preserving their intersection numbers
can be established
 by comparing the cubes in Figures~\ref{fig:Lcube} and~\ref{fig:PQTcube}
for the curves $L_{\gamma_1 \gamma_2 \gamma_3}$, and 
using Table~\ref{table:bij12curvess}
for $M_{k\alpha\beta}$.
\par
Since the isomorphism class of the $K3$ surface $Y_t\spprime$ varies as $t$ changes,
and $t$ is assumed to be very general,
we conclude that $\NS(Y_t\spprime)$ is of rank $19$.
By Lemma~\ref{lem:intnumbsPS}~(2) and the bijection above, 
$\NS(Y_t\spprime)$ contains a sublattice isomorphic to $\NS(X_s)$ with finite index.
Since $\NS(X_s)$
admits no non-trivial even overlattice,
as was noted in the proof of Proposition 7.1.1 of~\cite{Peters1986},
we see that $\NS(X_s)\cong \NS(Y_t)$.
By Corollary~7.1.3 of~\cite{Peters1986}, the transcendental lattice of $X_s$ is isomorphic to that of $Y_t\spprime$.
Applying the Torelli theorem for $K3$ surfaces,
we conclude that there exists a suitable choice of $t(s)$ for which 
 $X_s\cong Y_{t(s)}\spprime$.
\begin{table}
\begin{eqnarray*}
&&\begin{array}{c|cccccccc}
k\alpha\beta & 1\ordminus \ordminus & 1\ordminus \ordplus & 1\ordplus \ordminus & 1\ordplus \ordplus & 2\ordminus \ordminus & 2\ordminus \ordplus & 2\ordplus \ordminus & 2\ordplus \ordplus \\ 
\hline 
\mu\nu, \rho & 23, \tau & 14, 1/\tau & 23, 1/\tau & 14, \tau & 13, \tau & 24, 1/\tau & 13, 1/\tau & 24, \tau 
\end{array}
\\
&&\begin{array}{c|cccc}
k\alpha\beta & 3\ordminus \ordminus & 3\ordminus \ordplus & 3\ordplus \ordminus & 3\ordplus \ordplus \\ 
\hline 
\mu\nu, \rho & 34, \tau & 12, 1/\tau & 34, 1/\tau & 12, \tau 
\end{array}
\end{eqnarray*}
\vskip 10pt
\caption{Bijection between $M_{k\alpha\beta}$ and $C\spprime_{\mu\nu, \rho}$}
\label{table:bij12curvess}
\end{table}
\end{proof} 
\section{N\'eron--Severi lattice and automorphism group}\label{sec:NSlattice}
From now on, we omit the parameter $s$ in $X_s$,
and simply denote the Ap\'ery--Fermi $K3$ surface by $X$.
We also write $S_X$ for the N\'eron--Severi lattice $\NS(X)$ of $X$.
We make the orthogonal group 
$\OG(\SX)$ act on $\SX$ from the right.
\par
In this section, we execute Borcherds' method.
In Section~\ref{subsec:chambers}, we fix terminology and notation about \emph{chambers}.
In Section~\ref{subsec:SX}, we describe  $\SX$ explicitly,  and
in Section~\ref{subsec:anample}, we present an ample class $a_{32}$.
In Section~\ref{subsec:EmbeddingAutInO}, we embed $\Aut(X)$ into $\OG(\SX)$,
so that  out  geometric problem is transformed into a lattice-theoretic problem.
In Section~\ref{subsec:AutXLMrats},
we describe a finite subgroup  $\Aut(X, \LMrats)\subset \Aut(X)$ of order $48$.
With these preparations, 
in  Section~\ref{subsec:Borcherds}, we perform Borcherds' method,
and obtain a set of generators of $\Aut(X)$ in Proposition~\ref{prop:fromfact3}.
In Section~\ref{subsec:ProofOfCor}, we prove Corollary~\ref{cor:ratstransitive}.

\subsection{Chambers and their faces}\label{subsec:chambers}
We fix terminology and notation about lattices and hyperbolic spaces.
Let $L$ be an even lattice of signature $(1, l-1)$ with $l\ge 2$.
A \emph{positive cone} of $L$ is one of the two connected components of
the space 
\[
\set{x\in L\tensor\RR}{\intf{x,x}>0}.
\]
We fix a positive cone $\PPPL$, and 
define the autochronous subgroup of $\OG(L)$ as 
\[
\OG(L, \PPPL):=\set{g\in \OG(L)}{\PPPL^g=\PPPL}.
\]
We also define
\[
\RRRL:=\set{r\in L}{\intf{r, r}=-2}.
\]
For $v\in L\tensor\RR$ with $\intf{v,v}<0$,
let $(v)\sperp$ denote the real hyperplane in $\PPPL$ defined by $\intf{x, v}=0$.
The \emph{Weyl group} $W(L)$ is the subgroup of $\OG(L, \PPPL)$
generated by reflections $x\mapsto x+\intf{x, r}r$ into the mirrors $(r)\sperp$ defined by vectors $r\in \RRRL$.
A \emph{standard fundamental domain of the action of the Weyl group $W(L)$ on $\PPPL$}
is the closure in $\PPPL$ of a connected component of the space 
\[
\PPPL\;\;\setminus\;\; \bigcup_{r\in \RRRL} (r)\sperp.
\]
Now, let $M$ be a primitive sublattice of $L$ with signature $(1, m-1)$ with $m\ge 2$,
and let $\PPPM$ be the positive cone $(M\tensor\RR)\cap \PPPL$ of $M$.
\begin{definition}\label{def:LMchamber}
An \emph{$L/M$-chamber} is a closed subset $D$ of $\PPPM$ 
such that 
\begin{enumerate}[(i)]
\item $D$ has the form $\PPPM\cap D_L$,
where $D_L$ is a standard fundamental domain of the action of $W(L)$ on $\PPPL$,
and 
\item
$D$ contains a nonempty open subset of $\PPPM$.
\end{enumerate}
\end{definition}
Each $L/M$-chamber is defined in $\PPPM$ by locally finite linear inequalities 
\begin{equation}\label{eq:ineqs}
\intf{x, v_i}\ge 0, \quad\textrm{where}\quad v_i\in M\tensor \QQ.
\end{equation}
\begin{remark}\label{rem:LL}
According to this terminology,
the lengthy phrase
``standard fundamental domain of the action of $W(L)$ on $\PPPL$"
can be shortened to ``$L/L$-chamber".
Note that $W(L)$ acts on the set of $L/L$-chambers simply transitively.
\end{remark}
\begin{remark}\label{rem:notisom}
In general, $L/M$-chambers are \emph{not} congruent to each other.
\end{remark}
\begin{remark}\label{rem:tess}
Each $M/M$-chamber is a union of $L/M$-chambers,
meaning that each $M/M$-chamber is \emph{tessellated} by $L/M$-chambers.
(We use the term ``tessellation" even when the constituent tiles are not congruent to each other.)
\par
More generally,
if $M\sprime$ is a primitive sublattice of $M$,
then every $M/M\sprime$-chamber is tessellated by $L/M\sprime$-chambers.
\end{remark}
For $v\in L\cap \PPPL$, we put
\[
[v]\sperp:=\set{r\in L}{\intf{v, r}=0}.
\]
Then a point $v\in L\cap \PPPL$ is an interior point of an $L/L$-chamber if and only if $[v]\sperp\cap \RRRL=\emptyset$.
Suppose that $v$ is an interior point of an $L/L$-chamber $N$, and 
let $v\sprime$ be another vector of $ L\cap \PPPL$.
Then $v\sprime$ belongs to the same $L/L$-chamber $N$ 
as $v$ if and only if the set 
\[
\Sep(v, v\sprime):=\set{r\in \RRRL}{\;\intf{v, r}>0, \;\;\intf{v\sprime, r}<0\;}
\]
of \emph{separating $(-2)$-vectors} is empty.
The set $\Sep(v, v\sprime)$ can be computed using an algorithm given in Section 3.3 of~\cite{Shimada2014}.
\par
Let $D$ be an $L/M$-chamber.
A closed subset $f$ of $D$ is called a \emph{face of codimension $\mu$ of $D$}
if there exists a linear subspace $\PPPf$ of $\PPPM$ of codimension $\mu$ such that
\begin{enumerate}[(i)]
\item $f=\PPPf\cap D$,
\item $\PPPf$ is disjoint from the interior of $D$, and
\item $f$ contains a nonempty open subset of $\PPPf$.
\end{enumerate}
The linear subspace $\PPPf$ is called the \emph{supporting linear subspace} of the face $f$.
A face of codimension $1$ is called a \emph{wall}.
%
\par
Let $w$ be a wall of $D$.
We say that a vector $v$ of the dual lattice $M\dual$ is a \emph{primitive defining vector}
of the wall $w=\PPP_w\cap D$ of $D$ if 
\begin{enumerate}[(i)]
\item $\PPP_w=(v)\sperp$, 
\item $v$ is primitive in $M\dual$,
and 
\item $\intf{v, x}>0$ for an (and hence every) interior point $x$ of $D$.
\end{enumerate}
Each wall of $D$ has a unique primitive defining vector.
\par
For a face $f$ of $D$,
let $\DDD(f)$ be the set of $L/M$-chambers that contain $f$.
If $w$ is a wall of $D$, 
there exists a unique $L/M$-chamber $D\sprime\ne D$ such that $\DDD(w)=\{D, D\sprime\}$.
We call $D\sprime$ the $L/M$-chamber \emph{adjacent to $D$ across the wall $w$}.
\subsection{The lattice \texorpdfstring{$\SX$}{SX}}\label{subsec:SX}
We study the N\'eron--Severi lattice $\SX$,
which is an even lattice of signature $(1, 18)$.
Let $\PX\subset \SX\tensor\RR$ be 
the positive cone of $\SX$ containing an ample class of $X$.
We define the \emph{nef-and-big cone} of $X$ by 
\[
\NX:=\set{x\in \PX}{\textrm{$\intf{x, C}\ge 0$ for all curves $C$ on $X$}}.
\]
It is well known that $\NX$ is an $\SX/\SX$-chamber.
We then define 
\[
\RX:=\RRR_{\SX}=\set{r\in \SX}{\intf{r,r}=-2}, 
\]
and denote by 
$\RatsX\subset \RX$
the set of classes of smooth rational curves on $X$.
Then $\NX$ is determined by 
\[
\NX=\set{x\in \PX}{\textrm{$\intf{x, C}\ge 0$ for any $C\in \RatsX$}}.
\]
\begin{remark}
To simplify notation,
we do not distinguish a smooth rational curve on $X$ and its class in $\SX$.
For example, we often write $C\in \SX$ for $C\in\RatsX$.
\end{remark}
We introduce a basis of the N\'eron--Severi lattice $\SX$.
First, 
we fix a basis for the lattice $M_6$ defined in~\eqref{eq:M6}.
Let $u_1, u_2$ be the basis of the hyperbolic plane $U$ with the Gram matrix 
{\small
\[
 \left(\begin{array}{cc} 
0 & 1 \\ 
1 & 0 
\end{array}\right).
\]}%
For $\nu=1, 2$,
let $e_1\spar{\nu}, \dots, e_8\spar{\nu}$ be the $(-2)$-vectors 
in the two copies of $E_8 (-1)$ 
that form the dual graph illustrated in Figure~\ref{fig:E8}.
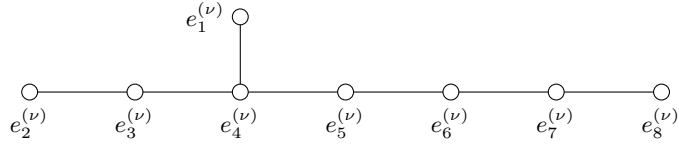
\begin{figure}
\begin{tikzpicture}[x=1.4cm, y=1cm]
{\small
%
\coordinate (n1) at (2 ,2);
\coordinate (n2) at (0 ,1);
\coordinate (n3) at (1 ,1);
\coordinate (n4) at (2 ,1);
\coordinate (n5) at (3 ,1);
\coordinate (n6) at (4 ,1);
\coordinate (n7) at (5 ,1);
\coordinate (n8) at (6 ,1);
\draw (n1) -- (n4);
\draw (n2) -- (n3);
\draw (n3) -- (n4);
\draw (n4) -- (n5);
\draw (n5) -- (n6);
\draw (n6) -- (n7);
\draw (n7) -- (n8);
\filldraw [fill=white] (n1) circle [radius=3pt];
\filldraw [fill=white] (n2) circle [radius=3pt];
\filldraw [fill=white] (n3) circle [radius=3pt];
\filldraw [fill=white] (n4) circle [radius=3pt];
\filldraw [fill=white] (n5) circle [radius=3pt];
\filldraw [fill=white] (n6) circle [radius=3pt];
\filldraw [fill=white] (n7) circle [radius=3pt];
\filldraw [fill=white] (n8) circle [radius=3pt];
\node at (n1) [left] {$e\spar{\nu}_1$\;};
\node at (n2) [below] {$e\spar{\nu}_2$\mystruth{12pt}};
\node at (n3) [below] {$e\spar{\nu}_3$\mystruth{12pt}};
\node at (n4) [below] {$e\spar{\nu}_4$\mystruth{12pt}};
\node at (n5) [below] {$e\spar{\nu}_5$\mystruth{12pt}};
\node at (n6) [below] {$e\spar{\nu}_6$\mystruth{12pt}};
\node at (n7) [below] {$e\spar{\nu}_7$\mystruth{12pt}};
\node at (n8) [below] {$e\spar{\nu}_8$\mystruth{12pt}};
}
\end{tikzpicture}
\caption{Dual graph of $e_1\spar{\nu}, \dots, e_8\spar{\nu}$}\label{fig:E8}
\end{figure}
Let $\angs{-12}=\ZZ\, v_{12}$ be the rank-one lattice
generated by a vector $v_{12}$ satisfying $\intf{v_{12}, v_{12}}=-12$.
Then the $19$ vectors 
\begin{equation}\label{eq:basisSX}
u_1, \;\; u_2, \;\; e_1\spar{1}, \;\; \dots, \;\; e_8\spar{1}, \;\; e_1\spar{2}, \;\; \dots, \;\; e_8\spar{2}, \;\; v_{12}
\end{equation}
form a basis of $M_6$.
We write vectors of $M_6$ as row vectors of length $19$
with respect to this basis.
Next, we choose an isometry $M_6\cong \SX$ as given in Table~\ref{table:isomM6SX},
and express vectors of $\SX$ using the same row vector representation.
\begin{table}
{\scriptsize
\[
\begin{array}{lcl}
%
%
L_{---} &\colon & [ 0, 0, 1, 0, 0, 0, 0, 0, 0, 0, 0, 0, 0, 0, 0, 0, 0, 0, 0 ]\\ 
L_{--0} &\colon & [ 0, 0, 0, 0, 0, 1, 0, 0, 0, 0, 0, 0, 0, 0, 0, 0, 0, 0, 0 ]\\ 
L_{--+} &\colon & [ 0, 0, 0, 0, 1, 0, 0, 0, 0, 0, 0, 0, 0, 0, 0, 0, 0, 0, 0 ]\\ 
L_{-0-} &\colon & [ 4, 3, -8, -5, -10, -15, -12, -9, -6, -3, -6, -4, -8, -12, -10, -8, -6, -3, -1 ]\\ 
L_{-0+} &\colon & [ 0, 0, 0, 1, 0, 0, 0, 0, 0, 0, 0, 0, 0, 0, 0, 0, 0, 0, 0 ]\\ 
L_{-+-} &\colon & [ 0, 0, 0, 0, 0, 0, 0, 0, 0, 0, 0, 0, 0, 0, 0, 0, 1, 0, 0 ]\\ 
L_{-+0} &\colon & [ 0, 0, 0, 0, 0, 0, 0, 0, 0, 0, 0, 0, 0, 0, 0, 1, 0, 0, 0 ]\\ 
L_{-++} &\colon & [ 4, 4, -8, -6, -11, -16, -13, -10, -7, -4, -9, -6, -12, -18, -15, -12, -8, -4, -1 ]\\ 
L_{0--} &\colon & [ 4, 4, -8, -5, -10, -15, -12, -9, -6, -3, -10, -7, -14, -20, -16, -12, -8, -4, -1 ]\\ 
L_{0-+} &\colon & [ 4, 4, -10, -7, -14, -20, -16, -12, -8, -4, -8, -5, -10, -15, -12, -9, -6, -3, -1 ]\\ 
L_{0+-} &\colon & [ 0, 0, 0, 0, 0, 0, 0, 0, 0, 0, 0, 0, 0, 0, 0, 0, 0, 1, 0 ]\\ 
L_{0++} &\colon & [ 0, 0, 0, 0, 0, 0, 0, 0, 0, 1, 0, 0, 0, 0, 0, 0, 0, 0, 0 ]\\ 
L_{+--} &\colon & [ 0, 0, 0, 0, 0, 0, 0, 0, 0, 0, 0, 0, 1, 0, 0, 0, 0, 0, 0 ]\\ 
L_{+-0} &\colon & [ 0, 0, 0, 0, 0, 0, 0, 0, 0, 0, 0, 0, 0, 1, 0, 0, 0, 0, 0 ]\\ 
L_{+-+} &\colon & [ 0, 0, 0, 0, 0, 0, 0, 0, 0, 0, 1, 0, 0, 0, 0, 0, 0, 0, 0 ]\\ 
L_{+0-} &\colon & [ 0, 0, 0, 0, 0, 0, 0, 0, 0, 0, 0, 1, 0, 0, 0, 0, 0, 0, 0 ]\\ 
L_{+0+} &\colon & [ 4, 3, -6, -4, -8, -12, -10, -8, -6, -3, -8, -5, -10, -15, -12, -9, -6, -3, -1 ]\\ 
L_{++-} &\colon & [ 4, 4, -9, -6, -12, -18, -15, -12, -8, -4, -8, -6, -11, -16, -13, -10, -7, -4, -1 ]\\ 
L_{++0} &\colon & [ 0, 0, 0, 0, 0, 0, 0, 1, 0, 0, 0, 0, 0, 0, 0, 0, 0, 0, 0 ]\\ 
L_{+++} &\colon & [ 0, 0, 0, 0, 0, 0, 0, 0, 1, 0, 0, 0, 0, 0, 0, 0, 0, 0, 0 ]\\ 
M_{1--} &\colon & [ 1, 0, 0, 0, 0, 0, 0, 0, 0, 0, -3, -2, -4, -6, -5, -4, -3, -2, 0 ]\\ 
M_{1-+} &\colon & [ 7, 7, -15, -10, -20, -30, -25, -19, -13, -7, -12, -8, -16, -24, -20, -15, -10, -5, -2 ]\\ 
M_{1+-} &\colon & [ 7, 7, -12, -8, -16, -24, -20, -15, -10, -5, -15, -10, -20, -30, -25, -19, -13, -7, -2 ]\\ 
M_{1++} &\colon & [ 1, 0, -3, -2, -4, -6, -5, -4, -3, -2, 0, 0, 0, 0, 0, 0, 0, 0, 0 ]\\ 
M_{2--} &\colon & [ 3, 3, -5, -4, -7, -10, -8, -6, -4, -2, -5, -4, -7, -10, -8, -6, -4, -2, -1 ]\\ 
M_{2-+} &\colon & [ 5, 5, -12, -8, -16, -24, -20, -15, -10, -5, -12, -8, -16, -24, -20, -15, -10, -5, -1 ]\\ 
M_{2+-} &\colon & [ 9, 7, -17, -12, -23, -34, -28, -21, -14, -7, -17, -12, -23, -34, -28, -21, -14, -7, -2 ]\\ 
M_{2++} &\colon & [ -1, 1, 0, 0, 0, 0, 0, 0, 0, 0, 0, 0, 0, 0, 0, 0, 0, 0, 0 ]\\ 
M_{3--} &\colon & [ 12, 11, -24, -16, -32, -48, -40, -30, -20, -10, -24, -16, -32, -48, -39, -30, -20, -10, -3 ]\\ 
M_{3-+} &\colon & [ 0, 0, 0, 0, 0, 0, 1, 0, 0, 0, 0, 0, 0, 0, 0, 0, 0, 0, 0 ]\\ 
M_{3+-} &\colon & [ 0, 0, 0, 0, 0, 0, 0, 0, 0, 0, 0, 0, 0, 0, 1, 0, 0, 0, 0 ]\\ 
M_{3++} &\colon & [ 12, 11, -24, -16, -32, -48, -39, -30, -20, -10, -24, -16, -32, -48, -40, -30, -20, -10, -3 ]
\end{array}
\]
}
\vskip .5cm
\caption{Isometry between $M_6$ and $ \SX$}\label{table:isomM6SX}
\end{table}
\begin{remark}
Under this isomorphism $M_6\cong \SX$,
the vector $u_1\in M_6$ corresponds to the class of a fiber
of the elliptic fibration $\phi\colon X\to \PP^1$ defined by the configuration in Figure~\ref{fig:2E8},
the vector $u_2\in M_6$ corresponds to the class $z+u_1$, where 
$z$ is  
the zero section $M_{2\ordplus\ordplus}$ of $\phi$,
and the vectors $e_i\spar{\nu}$ correspond to the reduced parts $C$ of the irreducible components 
of the two reducible fibers of $\phi$ satisfying $\intf{z, C}=0$.
The sign of $v_{12}$ is chosen so that $\intf{v_{12}, C}\ge 0$ holds for 
all $32$ smooth rational curves $C$ in Lemma~\ref{lem:intnumbsPS}.
\end{remark}
%
%
%
\subsection{An ample class \texorpdfstring{$a_{32}$}{a32}}\label{subsec:anample}
Let $\LMrats$ be the set of $32$ smooth rational curves in Lemma~\ref{lem:intnumbsPS}. 
%
%
Let $h_8\in \SX$ be the class of a hyperplane section of the projective model
$\barX_s\subset \PP^6$ of $X$ defined by~\eqref{eq:wuuuvvv}. 
Since $\barX_s$ is a $(2,2,2)$-complete intersection in the hyperplane of $\PP^6$
defined by the first equation of~\eqref{eq:wuuuvvv}, 
it follows that $h_8$ is a nef vector of degree $8$.
Examining the intersection numbers with the $32$ smooth rational curves in $\LMrats$, 
we find 
%
\begin{eqnarray*}
h_{8}&=&[24, 22, -48, -32, -64, -95, -78, -59, -40,\\
&&-20, -48, -32, -64, -95, -78, -59, -40, -20, -6 ].
\end{eqnarray*}
%
%
Recall that $\Sing\barX_s$ consists of $12$ ordinary nodes
$p_{\gamma_1 \gamma_2 \gamma_3}$,
where one of $\gamma_1, \gamma_2, \gamma_3$ is $0$ and the other two are in 
$\{\ordplus, \ordminus\}$.
Thus, we obtain 
\[
\set{r\in \RatsX}{\intf{r, h_8}=0}=\set{L_{\gamma_1 \gamma_2 \gamma_3}}{\textrm{one of $\gamma_1, \gamma_2, \gamma_3$ is $0$}}
\;\;\subset\;\; \LMrats.
\]
It follows that  $\NX$ is the $\SX/\SX$-chamber containing $h_8$ and contained in the region of $\PX$ 
defined by 
\begin{equation}\label{eq:regiondef}
\textrm{$\intf{x, C}\ge 0$ 
for all $C=L_{\gamma_1 \gamma_2 \gamma_3}$
with one of $\gamma_1, \gamma_2, \gamma_3$ being $0$.}
\end{equation}
Now we define
\begin{eqnarray*}
a_{32}&:=& [ 70, 63, -140, -94, -187, -279, -230, -174, -117, \\
&& -59, -140, -94, -187, -279, -230, -174, -117, -59, -17 ].
\end{eqnarray*}
(See Remark~\ref{rem:randomsearcha32} for a method by which we found this vector.)
We verify that
\[
\intf{a_{32}, a_{32}}=32.
\]
%
%
%
%
%
The intersection numbers of $a_{32}$ with elements of $\LMrats$ are 
\begin{equation}\label{eq:Ca32}
\intf{C, a_{32}}=
\begin{cases}
1&\textrm{ if $C=L_{\gamma_1\gamma_2\gamma_3}$ or 
$C\in\{M_{2\ordminus\ordplus}, M_{2\ordplus\ordminus}, M_{3\ordminus\ordminus}, M_{3\ordplus\ordplus} \}$,}\\
4&\textrm{ if $C=M_{1\alpha\beta}$ }, \\
7&\textrm{ if $C\in\{M_{2\ordminus\ordminus}, M_{2\ordplus\ordplus}, M_{3\ordminus\ordplus}, M_{3\ordplus\ordminus} \}$}. \\
\end{cases}
\end{equation}
Hence  $a_{32}$ lies in the region defined by~\eqref{eq:regiondef}.
We confirm by direct computation that $x=a_{32}$ satisfies the following:
\begin{equation}\label{eq:ampleconds}
[x]\sperp\cap \RRR_X=\emptyset, \quad
\Sep(h_8, x)=\emptyset.
\end{equation}
Therefore $a_{32}$ is ample.
\par
Thanks to 
the ample class $a_{32}$,
we can now utilize various tools and methods explained in~\cite{ Shimada2024}.
For example, 
we can determine whether a given $(-2)$-vector $r\in \RX$ belongs to $\RatsX$ or not
by the criterion in Section 3.4 of~\cite{ Shimada2024}.
The numbers of smooth rational curves $C$ on $X$ of low degree $\intf{C, a_{32}}$ 
are given in Table~\ref{table:numrats}.
\begin{remark}
The smooth rational curves $C$ with $\intf{C, a_{32}}<7$ belong to  $\LMrats$.
Only four smooth rational curves among the $8$ curves $C$ with $\intf{C, a_{32}}=7$ are  in $\LMrats$.
The dual graph of the smooth rational curves $C$ with $\intf{C, a_{32}}=1$ (that is,
the lines of the projective model of $X$ by $a_{32}$) is obtained from~\eqref{eq:Ca32} and Lemma~\ref{lem:intnumbsPS}.
\end{remark}
%
%
%
%
 %
 %
 \begin{table}
\begin{eqnarray*}
&&\begin{array}{c|ccccccccccccccccc}
\intf{C, a_{32}}& 1& 2& 3& 4& 5& 6& 7& 8& 9& 10& 11& 12 &13 & 14 & 15 & 16 & 17 \\ 
\hline 
\textrm{number} & 24& 0& 0& 4& 0& 0& 8& 0& 0& 16& 32& 0 & 32& 16& 0&80& 192
\end{array}\\
&&
\begin{array}{c|cccccccccccccc}
\intf{C, a_{32}}& 18 &19& 20& 21& 22& 23& 24 &25 & 26 & 27 & 28 & 29 &&\\ 
\hline 
\textrm{number} &0 & 136& 96& 0& 248& 384& 0 & 416& 320 & 304 &560 &816 &&
\end{array}.
\end{eqnarray*}
\caption{Numbers of smooth rational curves of low degrees}\label{table:numrats}
\end{table}
\begin{remark}\label{rem:randomsearcha32}
We explain a method to find many ample classes  
 by random search.
First we find an ample class $a$ by the following method.
We choose  a random vector $v\in \SX$ 
such that 
$\intf{v, C}> 0$ 
for all $C=L_{\gamma_1 \gamma_2 \gamma_3}$
with one of $\gamma_1, \gamma_2, \gamma_3$ being $0$.
Then we choose a positive integer $n$ and put $a:=nh_8+v$.
If $n$ is sufficiently large, then $a\in \PX$ and $x=a$ satisfies~\eqref{eq:regiondef} and~\eqref{eq:ampleconds},
and hence $a$ is ample.
Once an ample class $a$ is found, we repeat the following process. 
 We choose  a random vector $v\in \SX\cap \PX$ of degree $d:=\intf{v, v}$
such that $[v]\sperp\cap \RRR_X=\emptyset$.
 Then we calculate the set $\Sep(v, a)$ of $(-2)$-vectors in $\SX$ separating  $v$ and $a$.
 Applying to $v$ the reflections with respect to the elements $r$ of $\Sep(v, a)$
 in an appropriate order, we obtain  
 a vector $a\sprime$ of degree  $\intf{a\sprime, a\sprime}=d$ such that $[a\sprime]\sperp\cap \RRR_X=\emptyset$
 and $\Sep(a, a\sprime)=\emptyset$.
 Thus we obtain a new ample class $a\sprime$.
\end{remark}
\begin{remark}
The class $a_{32}$  is the image of the orthogonal projection 
of Weyl vector $\weyl_0\in \Lts$, and hence it 
plays an important role in specifying  the $\Lts/\SX$-chamber $D_0=\PX\cap \Concham(\weyl_0)$ in Borcherds' method. 
See Section~\ref{subsec:Borcherds}.
\end{remark}
\subsection{Embedding \texorpdfstring{$\Aut(X)$}{Aut(X)}  into \texorpdfstring{$\OG(\SX, \PX)$}{OSXPX}}
\label{subsec:EmbeddingAutInO}
Let
\[
q_{\SX}\;\;\colon\;\; \SX\dual/\SX \to \QQ/2\ZZ
\]
denote the discriminant form of the even lattice $\SX$ (see~\cite{theNikulin}), 
where $\SX\dual$ is the dual lattice of $\SX$.
The discriminant group $\SX\dual/\SX$ is a cyclic group of order $12$ generated by $v_{12}/12 \bmod \SX$, and 
satisfies
$q_{\SX}(v_{12}/12)=-1/12 \bmod 2\ZZ$.
Let
\[
\OG(q_{\SX})\cong (\ZZ/12\ZZ)\sptimes=\{\pm 1, \pm 5\}
\]
denote the automorphism group of the finite quadratic form $q_{\SX}$.
We have a natural homomorphism
\[
\eta\colon \OG(\SX)\to \OG(q_{\SX}).
\]
By Theorem 5.4 and Example 5.5 of~\cite{ Shimada2024}, we obtain the following result:
\begin{proposition}\label{prop:Torelli}
The natural homomorphism $\Aut(X) \to \OG(\SX, \PX)$ is injective, and its image
consists precisely of isometries $g\in \OG(\SX, \PX)$ satisfying $\NX^g=\NX$ and $ \eta(g)\in \{\pm 1 \}$.
\qed
\end{proposition}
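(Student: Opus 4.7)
The plan is to deduce this from the strong Torelli theorem for complex $K3$ surfaces, together with a standard discriminant-form gluing argument. I would carry out the argument in three steps.

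\emph{Step 1 (gluing setup).} Since $H^2(X, \ZZ)$ is unimodular and contains $\SX$ as a primitive sublattice with orthogonal complement the transcendental lattice $T_X$, Nikulin's theory of discriminant forms gives a natural isomorphism $q_{T_X} \cong -q_{\SX}$, and a pair $(g_S, g_T) \in \OG(\SX) \times \OG(T_X)$ glues to an isometry of $H^2(X, \ZZ)$ precisely when $\eta(g_S)$ corresponds to $\eta_T(g_T)$ under this identification. By the strong Torelli theorem, an isometry $g \in \OG(\SX, \PX)$ is the restriction of an automorphism of $X$ if and only if $\NX^g = \NX$ (so that ample classes map to ample classes) and some such gluing $g_T$ exists which preserves the Hodge decomposition on $T_X \tensor \CC$.

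\emph{Step 2 (very-general periods).} Since $s$ is assumed very general, the period point of $X$ is Hodge-generic in the period domain attached to $T_X$. Hence the stabilizer of $H^{2,0}(X) \subset T_X \tensor \CC$ inside $\OG(T_X)$ is exactly $\{\pm \id\}$. Computing $\eta_T(\pm \id)$ and transporting through the identification $q_{T_X} \cong -q_{\SX}$ yields precisely the two elements $\pm 1$ of $\OG(q_{\SX}) \cong (\ZZ/12\ZZ)\sptimes = \{\pm 1, \pm 5\}$. Combining with Step~1, this produces the claimed image description.

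\emph{Step 3 (injectivity).} Any $\sigma \in \Aut(X)$ acting trivially on $\SX$ has $\eta(\sigma|_{\SX}) = 1$, so the gluing condition forces its restriction to $T_X$ to be $+\id$ rather than $-\id$. Therefore $\sigma$ acts trivially on the full lattice $H^2(X, \ZZ)$, and the standard fact that $\Aut(X) \hookrightarrow \OG(H^2(X, \ZZ))$ for $K3$ surfaces yields $\sigma = \id$.

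The main delicate point is the bookkeeping in Step~2: verifying that $\OG(q_{\SX})$ has order $4$ and identifying which of its elements actually arise as $\eta(g_T)$ for a Hodge-preserving $g_T$. This computation is precisely Example~5.5 of~\cite{Shimada2024}, and the general Torelli-plus-gluing machinery is Theorem~5.4 there, so the proof reduces to an appeal to these two results.
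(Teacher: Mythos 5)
Your proposal is correct and follows essentially the same route as the paper, which proves this proposition simply by invoking Theorem~5.4 and Example~5.5 of~\cite{Shimada2024} — exactly the two results your argument unwinds (strong Torelli plus Nikulin's discriminant-form gluing for the primitive embedding $\SX \subset H^2(X,\ZZ)$, together with the very-general-period fact that the Hodge isometries of $T_X$ are $\{\pm\id\}$ and the computation $\OG(q_{\SX}) = \{\pm 1, \pm 5\}$). Your Step~3 injectivity argument, using that $-\id$ acts nontrivially on the order-$12$ discriminant group, is a valid filling-in of a detail the paper leaves to the cited reference.
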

From this point onward, we will regard $\Aut(X)$ as a subgroup of $\OG(\SX, \PX)$.
An isometry $g\in \OG(\SX, \PX)$ satisfies the condition $\NX^g=\NX$
if and only if the set $\Sep(a_{32}, a_{32}^g)$ of $(-2)$-vectors 
separating
$a_{32}$ and $a_{32}^g$
is empty. Thus, for $g\in \OG(\SX, \PX)$, we have 
\[
g\in \Aut(X) \;\; \Longleftrightarrow\;\; (\;\;\Sep(a_{32}, a_{32}^g)=\emptyset\;\;\textrm{and}\;\;\eta(g)\in \{\pm 1\}\;\;).
\]
\subsection{The finite subgroup \texorpdfstring{$\Aut(X, \LMrats)$}{AutXL32}}\label{subsec:AutXLMrats}
Let $\OG(\SX, \LMrats)$ denote the group of permutations of the set $\LMrats$ of $32$ smooth rational curves in Lemma~\ref{lem:intnumbsPS}
that preserve intersection numbers.
Since the classes of curves in $\LMrats$ generate $\SX$,
we can naturally embed $\OG(\SX, \LMrats)$ into $\OG(\SX)$.
%
%
Since the sum $s$ of elements of $\LMrats$ satisfies
$\intf{s,s}>0$ and $\intf{s, a_{32}}>0$,
 it follows that $\OG(\SX, \LMrats)$ is contained in $\OG(\SX, \PX)$.
We put
\[
\Aut(X, \LMrats):=\OG(\SX, \LMrats)\cap\Aut(X),
\]
where the intersection is taken in $\OG(\SX, \PX)$.
In this section, 
we present various facts about this finite automorphism group $\Aut(X, \LMrats)$.
\newcounter{smallfact}
\setcounter{smallfact}{1}
\newcommand{\newsmallfact}{(\hbox to .25cm{\hfill{\bf \alph{smallfact}}\hfill}) \addtocounter{smallfact}{1}}
\par
\medskip
\newsmallfact The size of the group $\OG(\SX, \LMrats)$ is $96$.
Every element $g$ of $\OG(\SX, \LMrats)$ satisfies $\Sep(a_{32}, a_{32}^g)=\emptyset$,
which implies 
\[
\Aut(X, \LMrats)=\set{g\in \OG(\SX, \LMrats)}{\eta(g)\in \{\pm 1\}}.
\]
Let $\mu \in \OG(\SX, \LMrats)$ be the involution given by 
\[
L_{\gamma_1\gamma_2\gamma_3}^{\mu}=L_{\gamma_1\gamma_2\gamma_3},
\quad
M_{k\alpha\beta}^{\mu}=M_{k ({-\alpha})\beta}.
\]
Then we have $\eta(\mu)=5\in \OG(q_{\SX})$, and 
\[
\OG(\SX, \LMrats)=\angs{\mu}\times \Aut(X, \LMrats).
\]
In particular, the size of the group $\Aut(X, \LMrats)$ is $48$.
This group $\Aut(X, \LMrats)$ acts on the $L$-cube (Figure~\ref{fig:Lcube}) faithfully.
We put the $L$-cube in $\RR^3$ by
\[
L_{\gamma_1 \gamma_2 \gamma_3} \;\; \mapsto \;\; \gamma_1 \vect{e}_1 + \gamma_2 \vect{e}_2 +\gamma_3 \vect{e}_3,
\]
where $\vect{e}_1, \vect{e}_2, \vect{e}_3$ are the standard ortho-normal basis of $\RR^3$.
This gives a representation 
\begin{equation}\label{eq:rhoL}
\rho_{L}\colon \Aut(X, \LMrats)\inj \OG(3).
\end{equation}
Then the morphism $\eta\colon \Aut(X, \LMrats)\to\{\pm 1\}\subset\OG(q_{\SX})$ 
is given by 
\begin{equation}\label{eq:detrhoL}
\eta(g)=1 \;\; \Leftrightarrow\;\; \rho_{L}(g)\in \SO(3).
\end{equation}
\par
\medskip
\newsmallfact
The action of $\Aut(X, \LMrats)$ decomposes
$ \LMrats$ 
 into three orbits
\[
\set{L_{\gamma_1\gamma_2\gamma_3}}{\textrm{none of $\gamma_1,\gamma_2,\gamma_3$ is zero}},\quad
\set{L_{\gamma_1\gamma_2\gamma_3}}{\textrm{one of $\gamma_1,\gamma_2,\gamma_3$ is zero}},\quad 
\{M_{k\alpha\beta}\}.
\]
These orbits have sizes $8, 12, 12$, respectively.
\par
\medskip
\newsmallfact
By the natural embedding $\LMrats\inj \SX$,
we have
\[
\LMrats=\set{r\in \RatsX}{\intf{r, h_8}\le 2}.
\]
Hence the group
\[
\Aut(X, h_8):=\set{g\in \Aut(X)}{h_8^g=h_8}
\]
of projective automorphisms of the $(2,2,2)$-complete intersection $\barX_s\subset \PP^5$ 
given by~\eqref{eq:wuuuvvv}
is contained in $\Aut(X, \LMrats)$. 
In fact, by computing the order of $\Aut(X, h_8)$, 
we can show that $\Aut(X, h_8)=\Aut(X, \LMrats)$.
\par
\medskip
\newsmallfact
There exists an involution $\theenr \in \Aut(X, \LMrats)$ defined by 
\[
L_{\gamma_1\gamma_2\gamma_3}^{\theenr}=L_{(-\gamma_1) (-\gamma_2) (-\gamma_3)},
\quad
M_{k\alpha\beta}^{\theenr}=M_{k(-\alpha)\beta}.
\]
The center of $\Aut(X, \LMrats)$ is equal to $\angs{\theenr}$.
Let $\Sigma\subset \Aut(X, \LMrats)$ be the subgroup
consisting of all $g\in \Aut(X, \LMrats)$ such that
\[
\{P_1^{g}, P_2^{g}, P_3^{g}, P_4^{g}\}=\{P_1, P_2, P_3, P_4\},
\]
where $P_1=L_{+-+}$, $P_2=L_{-++}$, $P_3=L_{---}$, $P_4=L_{++-}$
are the vertices of a regular tetrahedron 
in the cube in Figure~\ref{fig:PQTcube}.
Then we have
\[
\Aut(X, \LMrats)=\angs{\theenr}\times \Sigma,
\]
and $\Sigma$ is isomorphic to the symmetric group $\SSSS_4$.
The involution $\theenr$ is induced by the Enriques involution 
\begin{equation}\label{eq:theEnriques}
x_1\leftrightarrow 1/x_1, \quad 
x_2\leftrightarrow 1/x_2, \quad 
x_3\leftrightarrow 1/x_3, \quad 
x_4\leftrightarrow 1/x_4 \quad 
\end{equation}
of the quartic surface $Y_t$.
This Enriques involution and the associated Enriques surface were studied by Mukai and Ohashi~\cite{MukaiOhashi2015}. 
The action of $\Sigma\cong \SSSS_4$ on $Y_t$
is induced by the permutations 
of the coordinates $(x_1: x_2: x_3: x_4)$ of $\PP^3$.
\par
\medskip
\newsmallfact
We have an isomorphism 
\[
\Aut(X, \LMrats)\,=\, \angs{\theenr}\times \Sigma
\;\; \cong\;\; (\ZZ/2\ZZ)^3\rtimes \SSSS_3.
\]
This isomorphism arises from 
the action of $\Aut(X, \LMrats)$
on the affine Fermi surface $X_s\spcirc$ in $\AA^3$
via the three involutions $\xi_i\leftrightarrow 1/\xi_i$ and 
the permutations 
of the coordinates $(\xi_1,\xi_2, \xi_3)$ of $\AA^3$.
\subsection{Borcherds' method}\label{subsec:Borcherds}
Let $\Lts$ be an even unimodular lattice of rank $26$ and signature $(1, 25)$.
Note that such a lattice is unique up to isomorphism.
We embed $\SX$ into $\Lts$ primitively 
using the technique of discriminant forms~\cite{theNikulin} as follows.
Recall that the discriminant group $\SX\dual/\SX$ is a cyclic group of 
order $12$ generated by $\gamma_S$, 
where $\gamma_S:=v_{12}/12 \bmod\SX$, 
and the discriminant form $q_{\SX}$ is given by
$q_{\SX}(\gamma_S)=-1/12 \bmod 2\ZZ$.
Let $R$ be the \emph{negative-definite} root lattice of type $D_5+A_2$.
We fix a basis $f_1, \dots, f_7$ of $R$ 
as is shown in the Dynkin diagram in Figure~\ref{fig:D5A2}.
Then $R\dual/R$ is a cyclic group of order $12$ generated by $\gamma_R:=\tilde{\gamma}_R \bmod R$, where 
\[
\tilde{\gamma}_R:= \frac{\;1\;}{4}(3 f_1+f_2+2f_3+2f_5)+\frac{\;1\;}{3}(f_6+2 f_7)\;\;\in\;\; R\dual, 
\] 
and we have $q_{R}(\gamma_R)=1/12 \bmod 2\ZZ$, where $q_R\colon R\dual/R \to \QQ/2\ZZ$ 
is the discriminant form of $R$.
Hence 
$\gamma_S \mapsto -\gamma_R$
 gives an anti-isomorphism $q_{\SX}\cong -q_{R}$.
The graph of this anti-isomorphism in $(\SX\dual/\SX)\times (R\dual/R)$ 
yields an even unimodular overlattice $\Lts$ of the orthogonal direct sum $\SX\oplus R$.
Indeed, 
$\Lts$ is generated in $ \SX\dual\oplus R\dual$ over $\SX\oplus R$ by 
the vector $v_{12}/12+\tilde{\gamma}_R$.
From this point forward, 
we regard $\SX$ and $R$ as primitive sublattices of $\Lts$ 
via this embedding $(\SX\oplus R)\inj \Lts$.
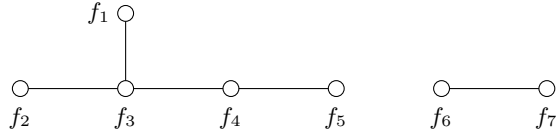
\begin{figure}
\begin{tikzpicture}[x=1.4cm, y=1cm]
{\small
%
%
\coordinate (n1) at (1 ,2);
\coordinate (n2) at (0 ,1);
\coordinate (n3) at (1 ,1);
\coordinate (n4) at (2 ,1);
\coordinate (n5) at (3 ,1);
\coordinate (n6) at (4 ,1);
\coordinate (n7) at (5 ,1);
\draw (n1) -- (n3);
\draw (n2) -- (n3);
\draw (n3) -- (n4);
\draw (n4) -- (n5);
\draw (n6) -- (n7);
\filldraw [fill=white] (n1) circle [radius=3pt];
\filldraw [fill=white] (n2) circle [radius=3pt];
\filldraw [fill=white] (n3) circle [radius=3pt];
\filldraw [fill=white] (n4) circle [radius=3pt];
\filldraw [fill=white] (n5) circle [radius=3pt];
\filldraw [fill=white] (n6) circle [radius=3pt];
\filldraw [fill=white] (n7) circle [radius=3pt];
\node at (n1) [left] {$f_1$\;};
\node at (n2) [below] {$f_2$\mystruth{10pt}};
\node at (n3) [below] {$f_3$\mystruth{10pt}};
\node at (n4) [below] {$f_4$\mystruth{10pt}};
\node at (n5) [below] {$f_5$\mystruth{10pt}};
\node at (n6) [below] {$f_6$\mystruth{10pt}};
\node at (n7) [below] {$f_7$\mystruth{10pt}};
}
\end{tikzpicture}
\caption{Basis of $R$}\label{fig:D5A2}
\end{figure}
\par
Let $\Pts\subset \Lts\tensor\RR$ denote the positive cone of $\Lts$ containing the positive cone $\PX$ of $\SX$.
We refer to an $\Lts/\Lts$-chamber as a \emph{Conway chamber},
as its structure was determined by Conway~\cite{Conway1983}.
The tessellation of $\Pts$ by Conway chambers
induces a tessellation of $\PX$ by $\Lts/\SX$-chambers.
Each $\SX/\SX$-chamber, 
including the nef-and-big cone $\NX$, 
is also tessellated by $\Lts/\SX$-chambers.
For every $g\in \Aut(X)$, its action 
$\eta(g)\in \OG(q_{\SX})$ on the discriminant form $q_{\SX}$ is in $ \{\pm 1\}$, 
and hence the action of $g$ on $\SX$ extends to an action on $\Lts$.
Consequently, 
the action of $\Aut(X)$ on $\NX$ preserves the tessellation of $\NX$ by $\Lts/\SX$-chambers.
We put 
\begin{equation}\label{eq:VX}
\VX:=\textrm{the set of $\Lts/\SX$-chambers contained in $\NX$}.
\end{equation}
Our goal is to analyze the action of $\Aut(X)$ on $\NX$ 
via the the action of $\Aut(X)$ on $\VX$.
\begin{definition}\label{def:innerouter}
Let $D$ be an element of $\VX$,
and $f$ a face of $D$.
We say that $f$ is \emph{inner} if the set $\DDD(f)$ of all $\Lts/\SX$-chambers 
containing $f$ is a subset of $\VX$.
Otherwise, we say that $f$ is \emph{outer}.
\end{definition}
Suppose that $w=D\cap (v)\sperp$ is a wall of $D\in \VX$,
where $v\in \SX\dual$ is 
the primitive defining vector~(see Section~\ref{subsec:chambers}).
Then $w$ is inner if and only if 
the $\Lts/\SX$-chamber adjacent to $D$ across the wall $w$
belongs to $\VX$.
It is also obvious that 
$w$ is outer if and only if $v$
is equal to $\alpha C$ for some $\alpha\in \QQ_{>0}$ and $C\in \RatsX$.
\par
\medskip
We put
\[
\Rts:=\set{r\in \Lts}{\intf{r,r}=-2}.
\]
Recall that $a_{32} \in \SX$ is an ample class with $\intf{a_{32}, a_{32}}=32$,
which we regard as a vector of $\Lts$ by the embedding $\SX\inj \Lts$.
\begin{proposition}\label{claim:a32interior}
The ample class $a_{32}$ is an interior point of an $\Lts/\SX$-chamber.
\end{proposition}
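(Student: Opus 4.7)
The plan is to reduce the statement to a concrete lattice-theoretic condition and verify it by a bounded finite enumeration. A point of $\PX$ is interior to its $\Lts/\SX$-chamber if and only if no mirror $(r)^\perp$ with $r\in \Rts$ passes through it and cuts $\PX$ in a hyperplane; the mirror $(r)^\perp$ fails to cut $\PX$ exactly when $(r)^\perp \supset \SX\otimes\RR$, i.e.\ when $r\in R=\SX^\perp$. So the proposition is equivalent to the statement that every $r\in\Rts$ with $\langle a_{32},r\rangle=0$ already lies in $R$.

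I would decompose each $r\in\Lts$ as $r=r_S+r_R$ with $r_S\in\SX^\dual$ and $r_R\in R^\dual$ via the glue identifying $q_{\SX}$ with $-q_R$. Since $a_{32}$ has positive square in the hyperbolic lattice $\SX$ and $R$ is negative definite, the inequalities $\langle r_S,r_S\rangle\le 0$ and $\langle r_R,r_R\rangle\le 0$ together with $\langle r_S,r_S\rangle+\langle r_R,r_R\rangle=-2$ confine both summand norms to $[-2,0]$.

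A case split on the discriminant class $k\in\ZZ/12\ZZ$ of $r_S$ then handles the argument. For $k=0$ the vector $r_S$ lies in $\SX$ and has even norm, so $r_S=0$ (whence $r\in R$, as required) or $\langle r_S,r_S\rangle=-2$; the latter is ruled out by the ampleness input $[a_{32}]^\perp\cap\RX=\emptyset$ recorded in~\eqref{eq:ampleconds}. For each of the eleven nontrivial cosets, $12\,r_S$ sits in the rank-$18$ negative-definite lattice $a_{32}^\perp\cap \SX$ with norm in $[-288,0]$, while $r_R$ is constrained to a prescribed coset of $R$ in $R^\dual$ with norm $-2-\langle r_S,r_S\rangle$. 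These finite short-vector sets can be enumerated in the two definite lattices, paired under the glue and norm constraints, and checked to produce no admissible $r\in \Rts$.

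The main obstacle is the eleven nontrivial cosets, where there is no a priori obstruction forcing emptiness; the proof really does rely on finite enumeration. However, the bounds above keep each search small, the lattice $R$ has rank only $7$, and the discriminant group $\SX^\dual/\SX$ is cyclic of order $12$, so only a dozen cases need to be inspected. This makes the verification a routine machine computation, performed with the data accompanying the paper.
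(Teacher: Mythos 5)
Your proposal is correct and follows essentially the same route as the paper: both reduce the claim to showing that every $r\in\Rts$ with $\intf{r,a_{32}}=0$ lies in $R=\SX\sperp$ (so that $(r)\sperp$ contains all of $\PX$ rather than cutting it), and both settle this by a finite computation. The paper simply invokes a direct computation of the set $\set{r\in\Rts}{\intf{r,a_{32}}=0}$, while you spell out one concrete way to organize that enumeration via the glue decomposition $r=r_S+r_R$ and the discriminant cosets.
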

\begin{proof}
By direct computation,
we verify that the set $\set{r\in \Rts}{\intf{r, a_{32}}=0}$
is equal to the set
\[
\set{r\in \Rts}{\intf{r, v}=0\;\;\textrm{for all}\;\; v\in \SX}=
\set{r\in \Rts}{r\in R}\cong\set{r\in R}{\intf{r,r}=-2}
\]
of roots of $R$.
This implies that, 
if $r\in \Rts$ satisfies $\intf{r, a_{32}}=0$,
then we have $\PX\subset (r)\sperp$ in $\Pts$.
\end{proof}
\begin{definition}\label{def:weyl}
A  vector 
$\weyl$ of $\Lts$  is called a \emph{Weyl vector} if 
\begin{enumerate}[(i)]
\item $\weyl$ is non-zero, primitive in $\Lts$, and  of square-norm $0$, 
\item 
$\weyl$ is contained in the closure of $\Pts$ in $\Lts\tensor\RR$,
and 
\item 
the negative-definite even unimodular lattice $[\ZZ\weyl]\sperp/\ZZ\weyl$ of rank $24$ 
contains no vectors of square-norm $-2$.
\end{enumerate}
For a Weyl vector $\weyl$, 
we call a $(-2)$-vector $r\in \Rts$ of $\Lts$   a \emph{Leech root of $\weyl$}  if $\intf{r, \weyl}=1$ holds.
\end{definition}
Conway~\cite{Conway1983} proved that the mapping $\weyl\mapsto \Concham(\weyl)$,
where
\[
\Concham(\weyl) :=\set{x\in \Pts}{\intf{x, r}\ge 0 \;\;\textrm{for all Leech roots $r$ of $\weyl$}\;\; },
\]
is a bijection from the set of Weyl vectors to the set of Conway chambers.
Moreover, he showed that
$\Concham(\weyl)\cap (r)\sperp$ is a wall of the Conway chamber $\Concham(\weyl)$
for each Leech roots $r$ of $\weyl$;
that is, $\Concham(\weyl)\cap (r)\sperp$ contains a nonempty open subset of $(r)\sperp$
for every Leech root $r$ of $\weyl$.
\par
We put
\[
a_R:=[ -5, -5, -9, -7, -4, -1, -1 ] \in R,
\]
which is a vector of $R$ satisfying $\intf{a_R, f_j}=1$ for $j=1, \dots, 7$.
Since $\intf{a_R, a_R}=-32$,
the vector 
\[
\weyl_0:=a_{32}+a_R
\]
of $\Lts$ is of square-norm $0$.
We verify that $\weyl_0$ is a primitive vector in $\Lts$, and that  
the negative-definite even unimodular lattice $[\ZZ\weyl_0]\sperp/\ZZ\weyl_0$
has no $(-2)$-vectors.
Thus, we confirm that $\weyl_0$ is a Weyl vector.
\begin{proposition}\label{claim:a32interior2}
The closed subset 
\[
D_0:=\PX\cap \Concham(\weyl_0)
\]
of $\PX$ is the $\Lts/\SX$-chamber containing $a_{32}$ in its interior.
\end{proposition}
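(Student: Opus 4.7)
The plan is to verify that $a_{32}\in D_0$ and that $a_{32}$ is an interior point of $D_0$ viewed as a subset of $\PX$; together with Proposition~\ref{claim:a32interior}, this will identify $D_0$ as \emph{the} $\Lts/\SX$-chamber containing $a_{32}$ in its interior.

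First I would check $\intf{a_{32},r}\ge 0$ for every Leech root $r$ of $\weyl_0$. Since $\SX\oplus R$ has finite index in $\Lts$, the orthogonal decomposition $\Lts\tensor\QQ=(\SX\tensor\QQ)\oplus(R\tensor\QQ)$ lets me write each $r\in\Lts$ as $r=r_S+r_R$ with $r_S\in\SX\dual$ and $r_R\in R\dual$. The Leech-root conditions $\intf{r,\weyl_0}=1$ and $\intf{r,r}=-2$ become
\[
\intf{r_S,a_{32}}+\intf{r_R,a_R}=1, \qquad \intf{r_S,r_S}+\intf{r_R,r_R}=-2.
\]
If $r_S=0$, i.e.\ $r\in R$, then $\intf{r_R,a_R}=1$ combined with $\intf{a_R,f_j}=1$ for the simple roots $f_1,\dots,f_7$ of $R=D_5+A_2$ forces $r\in\{f_1,\dots,f_7\}$, since all other positive roots of $R$ have height $\ge 2$; for these $\intf{a_{32},r}=0$. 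If $r_S\ne 0$, the desired inequality is equivalent to $\intf{r_R,a_R}\le 1$. Negative-definiteness of $R$ bounds $-\intf{r_R,r_R}\le 2$, confining $r_R$ to a finite subset of $R\dual$, and I would verify the inequality for each surviving $r$ by enumerating the Leech roots of $\weyl_0$ with the algorithms of~\cite{Shimada2015} and~\cite{Shimada2024}.

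Once $a_{32}\in\Concham(\weyl_0)$ is established it follows that $a_{32}\in D_0$. For the interior-point claim, any wall of $D_0$ inside $\PX$ is of the form $\PX\cap(r)\sperp$ for a Leech root $r$ of $\weyl_0$ with $r_S\ne 0$. For such $r$, Proposition~\ref{claim:a32interior} forces $\intf{a_{32},r}\ne 0$, and combined with the previous step this upgrades to $\intf{a_{32},r}>0$. Hence $a_{32}$ has a neighborhood in $\PX$ contained in $D_0$, so $D_0$ satisfies the conditions of Definition~\ref{def:LMchamber} and is an $\Lts/\SX$-chamber containing $a_{32}$ in its interior. Since the $\Lts/\SX$-chamber with this property is unique, $D_0$ is that chamber.

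The main obstacle is the explicit verification of $\intf{r_R,a_R}\le 1$ for Leech roots $r$ of $\weyl_0$ with $r\notin R$: this is not visible from the general theory and requires the effective enumeration of Leech roots adjacent to the Weyl vector $\weyl_0$ using Conway's description of $\Concham(\weyl_0)$ and the computational machinery underlying Borcherds' method.
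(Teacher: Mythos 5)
Your overall architecture matches the paper's: invoke Proposition~\ref{claim:a32interior} for uniqueness and interiority, reduce everything to the single claim $a_{32}\in\Concham(\weyl_0)$, observe that Leech roots $r\in R$ satisfy $\intf{a_{32},r}=0$ and do not cut $\PX$ (your height argument forcing $r\in\{f_1,\dots,f_7\}$ is correct and consistent with the paper's remark that $|W(R)|=11{,}520$ Conway chambers induce $D_0$), and upgrade $\ge 0$ to $>0$ on the remaining walls. The final step and the case $r\in R$ are fine.

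The gap is in your reduction of the membership check to a finite computation. First, negative-definiteness of $R$ gives $\intf{r_R,r_R}\le 0$ and hence $\intf{r_S,r_S}\ge -2$; it does \emph{not} give $-\intf{r_R,r_R}\le 2$. For that you would need $\intf{r_S,r_S}\le 0$, which is not automatic since $\SX$ has signature $(1,18)$ and the projection $r_S$ of a $(-2)$-vector can a priori have positive square norm. Second, and more seriously, even after confining $\intf{r_R,r_R}$ (hence $r_R$) to a finite set, the collection of Leech roots of $\weyl_0$ is infinite — it is in bijection with the Leech lattice — and for each admissible negative value of $\intf{r_S,r_S}$ there are infinitely many candidates $r_S$ in the hyperbolic lattice $\SX\dual$. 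So ``verify the inequality for each surviving $r$'' is not a terminating procedure as you have set it up; the finiteness has to come from bounding the set of \emph{potential violators}, not the set of all Leech roots. This is exactly what the paper's device accomplishes: writing $\Lts=U_{\weyl}\oplus\Lambda$, it exhibits the vector $\ampleL=2\weyl_0+\weyl_0\sprime$ of \emph{positive} square norm in the interior of $\Concham(\weyl_0)$, and then the set $\Sep_{\Lts}(\ampleL,a_{32})=\set{r\in\Rts}{\intf{\ampleL,r}>0,\ \intf{a_{32},r}<0}$ of separating $(-2)$-vectors is finite and computable by the algorithm of~\cite{Shimada2014}; its emptiness is equivalent to the statement you want. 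You would need either this auxiliary interior point or an explicit a priori bound on the Leech-lattice component of a Leech root violating $\intf{a_{32},r}\ge 0$ (which amounts to the same computation) to make your argument complete.
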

\begin{proof}
We have already proved
 that $a_{32}$ is an interior point of a certain $\Lts/\SX$-chamber
in $\PX$.
Thus, it suffices to show that $a_{32}$ lies in $ \Concham(\weyl_0)$.
Since $\weyl_0\in \Lts$ is a primitive vector with square-norm $0$
and we have $\Lts=\Lts\dual$, 
there exists a vector $\weyl_0\sprime\in \Lts$ such that
$\intf{\weyl_0\sprime, \weyl_0\sprime}=0$ and $\intf{\weyl_0, \weyl_0\sprime}=1$.
Then $\weyl_0$ and $\weyl_0\sprime$ span a hyperbolic plane $U_{\weyl}$ 
in $\Lts$,
and its orthogonal complement $\Lambda:=U_{\weyl}\sperp$ is isomorphic to the negative-definite Leech lattice $[\ZZ\weyl_0]\sperp/\ZZ\weyl_0$.
Thus we can write $\Lts=U_{\weyl}\oplus \Lambda$.
The Leech roots with respect to $\weyl_0$
are given by
\[
r_{\lambda}:=\left(\frac{-2-\intf{\lambda, \lambda}}{2}\right)\weyl_0 +\weyl_0\sprime+\lambda, \quad\textrm{where}\quad \lambda \in \Lambda.
\]
We put
\[
\ampleL:=2 \weyl_0 +\weyl_0\sprime.
\]
Since $\intf{\ampleL,\ampleL}>0$ and $\intf{\ampleL, r_{\lambda}}>0$ for any $\lambda\in \Lambda$,
it follows that $\ampleL$ is an interior point of $ \Concham(\weyl_0)$.
Then we confirm that the set 
\[
\Sep_{\Lts}(\ampleL, a_{32}):=\set{r\in \Rts}{\;\intf{\ampleL, r}>0, \;\; \intf{a_{32}, r}<0\; }
\]
of $(-2)$-vectors in $\Lts$ separating $\ampleL$ and $a_{32}$ is empty
by the algorithm given in~\cite{Shimada2014}.
Therefore $a_{32}$ belongs to the Conway chamber $ \Concham(\weyl_0)$.
\end{proof}
\begin{remark}
The order of the Weyl group $W(R)$ of the root lattice $R$ of type $D_5+A_2$ is $11,520$.
Consequently, there exist exactly $11,520$ Conway chambers $\Concham\sprime$ 
such that $D_0=\PX\cap \Concham\sprime$.
\end{remark}
Starting from $D_0$,
we execute the algorithm described in Section 5 of~\cite{ Shimada2024},
and obtain 
the orbit decomposition of $\VX$ under the action of $\Aut(X)$,
where $\VX$ is the set of $\Lts/\SX$-chambers contained in $\NX$ (see~\eqref{eq:VX}).
As a result, we obtain the following facts.
\newcounter{bigfact}
\setcounter{bigfact}{1}
\newcommand{\newbigfact}{(\hbox to .4cm{\hfill{\bf \arabic{bigfact}}\hfill})\; \addtocounter{bigfact}{1}}
\par
\medskip
\newbigfact The $\Lts/\SX$-chamber $D_0$ has $80$ walls.
Let $w_1, \dots, w_{80}$ be the walls of $D_0$, 
and let $v_i\in \SX\dual$ be the primitive defining vector of $w_i$.
(See Section~\ref{subsec:chambers} for the definition of the primitive defining vector.)
The values
\[
n(w_i):=\intf{v_i,v_i}\quad\textrm{and}\quad 
a(w_i):=\intf{a_{32},v_i}
\]
for each wall $w_i$ 
are given in Table~\ref{table:D0wallsdata}.
\par
\medskip
\newbigfact The group
\[
\OG(\SX, D_0):=\set{g\in \OG(\SX, \PX)}{D_0^g=D_0}
\]
is of order $32$, and is equal to
\[
\OG(\SX, a_{32}):=\set{g\in \OG(\SX, \PX)}{a_{32}^g=a_{32}}.
\]
Its subgroup
\[
\Aut(X, D_0):=\Aut(X)\cap \OG(\SX, D_0)=\set{g\in \OG(\SX, D_0)}{\eta(g)\in \{\pm 1\}}
\]
is isomorphic to the dihedral group of order $16$.
We see that $\Aut(X, D_0)$ is equal to the group 
\[
\Aut(X, a_{32}):=\set{g\in \Aut(X)}{a_{32}^g=a_{32}}
\]
of the projective model of $X$ defined by $a_{32}$.
Table~\ref{table:D0wallsdata} shows 
the orbit decomposition of the set of walls of $D_0$ by the action of 
$\Aut(X, D_0)$.
In Table~\ref{table:D0wallsvects},
the primitive defining vector of a representative wall of each orbit $o_i$ is given.
The orbits $o_5$ and $o_6$ merge into a single orbit under the action of $\OG(\SX, D_0)$,
as do the orbits $o_9$ and $o_{10}$.
Meanwhile, each of the other six orbits remains to be an orbit under $\OG(\SX, D_0)$.
\begin{table}
\[
\begin{array}{c|ccccc}
\textrm{orbit} & \textrm{size} & n(w_i) & a(w_i) & \textrm{in $\NX$} & \intf{a_{32}, a_{32}^g} \\
\hline
%
o_{1} & 8 & -2 & 1 & \textrm{out} & 33\\ 
o_{2} & 16 & -2 & 1 & \textrm{out} & 33\\ 
o_{3} & 4 & -4/3 & 2 & \textrm{inn} & 38\\ 
o_{4} & 8 & -1 & 5 & \textrm{inn} & 82\\ 
o_{5} & 8 & -3/4 & 6 & \textrm{inn} & 128\\ 
o_{6} & 8 & -3/4 & 6 & \textrm{inn} & 128\\ 
o_{7} & 8 & -3/4 & 6 & \textrm{inn} & 128\\ 
o_{8} & 4 & -1/3 & 6 & \textrm{inn} & 248\\ 
o_{9} & 8 & -1/12 & 7 & \textrm{inn} & 1208\\ 
o_{10} & 8 & -1/12 & 7 & \textrm{inn} & 1208
\end{array}
\]
\caption{Walls of $D_0$}\label{table:D0wallsdata}
\end{table}
\begin{table}
{\scriptsize
\[
\begin{array}{lcl}
%
o_{1} &\colon & [ 0, 0, 0, 0, 0, 0, 0, 0, 0, 0, 0, 0, 0, 0, 0, 0, 0, 1, 0 ]\\ 
o_{2} &\colon & [ 0, 0, 0, 0, 0, 0, 0, 0, 0, 0, 0, 0, 0, 0, 0, 0, 1, 0, 0 ]\\ 
o_{3} &\colon & [ -6, -6, 12, 8, 16, 24, 20, 15, 10, 5, 12, 8, 16, 24, 20, 15, 10, 5, 5/3 ]\\ 
o_{4} &\colon & [ 6, 5, -12, -8, -16, -24, -20, -15, -10, -5, -10, -7, -14, -20, -16, -12, -8, -4, -3/2 ]\\ 
o_{5} &\colon & [ 5, 5, -12, -8, -16, -24, -20, -15, -10, -5, -9, -6, -12, -18, -15, -12, -8, -4, -5/4 ]\\ 
o_{6} &\colon & [ 7, 6, -14, -10, -19, -28, -23, -18, -12, -6, -12, -8, -16, -24, -20, -15, -10, -5, -7/4 ]\\ 
o_{7} &\colon & [ 14, 12, -29, -19, -38, -57, -47, -36, -24, -12, -27, -18, -36, -54, -44, -33, -22, -11, -13/4 ]\\ 
o_{8} &\colon & [ 13, 12, -27, -18, -36, -54, -44, -33, -22, -11, -26, -18, -35, -52, -43, -33, -22, -11, -19/6 ]\\ 
o_{9} &\colon & [ 12, 11, -25, -17, -33, -49, -40, -30, -20, -10, -24, -16, -32, -48, -40, -30, -20, -10, -35/12 ]\\ 
o_{10} &\colon & [ 14, 12, -27, -18, -36, -54, -44, -33, -22, -11, -27, -18, -36, -54, -44, -33, -22, -11, -41/12 ]\\ 
%
%
\end{array}
\]
}
\caption{Primitive defining vectors of walls of $D_0$}\label{table:D0wallsvects}
\end{table}
\par
\medskip
\newbigfact
Let $w$ be a wall of $D_0$.
If $w\in o_1\cup o_2$,
then $w$ is an outer wall.
Suppose instead that $w\in o_3\cup \dots\cup o_{10}$.
Then 
 the $\Lts/\SX$-chamber 
adjacent to $D_0$
across the wall $w$
is congruent to $D_0$ by the action of $\Aut(X)$.
In other words, the set
\begin{equation}\label{eq:Adj}
\Adj(w):=\bigset{g\in \Aut(X)}{\parbox{6cm}{$D_0^g$ is the $\Lts/\SX$-chamber adjacent to $D_0$ across the wall $w$}}
\end{equation}
is nonempty.
Thus, by~Proposition 4.1 of~\cite{BrandhorstShimada2022}
(see also Proposition 5.1 of~\cite{ Shimada2024}),
we obtain the following result.
\begin{proposition}\label{prop:fromfact3}
{\rm (1)} The group $\Aut(X)$ acts transitively on the set $\VX$ of $\Lts/\SX$-chambers contained in $\NX$.
\par
{\rm (2)} 
From each orbit $o_{\nu}$ for $\nu=3, \dots, 10$,
we choose a wall $w\spar{\nu}\in o_{\nu}$,
and an element $g(w\spar{\nu})$ of $\Adj(w\spar{\nu})$.
Then 
 $\Aut(X)$ is generated by the finite subgroup $\Aut(X, D_0)$ together with 
eight extra automorphisms 
$g(w\spar{\nu})$ for $\nu=3, \dots, 10$.
\qed
\end{proposition}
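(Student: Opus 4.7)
The proof rests on the tessellation of $\NX$ by the chambers in $\VX$ together with the input from fact~(3) above that every inner wall orbit $o_{\nu}$ with $\nu\in\{3,\dots,10\}$ has nonempty adjacency set. My plan for (1) is a standard gallery argument. Since $\NX$ is convex and the $\Lts/\SX$-chamber decomposition is locally finite, any two chambers $D,D'\in\VX$ can be joined by a finite gallery $D=D\spar{0},D\spar{1},\dots,D\spar{k}=D'$ whose consecutive chambers share a wall $w_i$. Because $\DDD(w_i)=\{D\spar{i},D\spar{i+1}\}\subset\VX$, each $w_i$ is an inner wall of $D\spar{i}$ in the sense of Definition~\ref{def:innerouter}. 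Transitivity then reduces, by induction on $k$, to the following claim: \emph{adjacency across an inner wall preserves the $\Aut(X)$-orbit of $D_0$}. To prove the claim, write $D\spar{i}=D_0^{h_i}$; then the pulled-back wall $w_i^{h_i^{-1}}$ is an inner wall of $D_0$, so it lies in some orbit $o_{\nu_i}$ with $\nu_i\in\{3,\dots,10\}$. Since $\Aut(X,D_0)$ acts transitively on $o_{\nu_i}$ (fact~(2)), there is $\sigma_i\in\Aut(X,D_0)$ with $w_i^{h_i^{-1}\sigma_i}=w\spar{\nu_i}$, and $g(w\spar{\nu_i})\in\Adj(w\spar{\nu_i})$ carries $D_0$ across $w\spar{\nu_i}$. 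Pushing back by $\sigma_i^{-1}h_i$ gives $D\spar{i+1}=D_0^{g(w\spar{\nu_i})\sigma_i^{-1}h_i}$, so $D\spar{i+1}$ remains in the orbit of $D_0$.

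For (2), given $g\in\Aut(X)$ I would note that $D_0^g\in\VX$ because $\NX^g=\NX$, fix a gallery from $D_0$ to $D_0^g$, and apply the inductive construction above. This produces elements $h_0=1,h_1,\dots,h_k$ with $D\spar{i}=D_0^{h_i}$ and $h_{i+1}=g(w\spar{\nu_i})\sigma_i^{-1}h_i$ for some $\sigma_i\in\Aut(X,D_0)$ and $\nu_i\in\{3,\dots,10\}$. Since $D_0^g=D_0^{h_k}$, the element $gh_k^{-1}$ stabilizes $D_0$ and hence lies in $\Aut(X,D_0)$. Writing $g=(gh_k^{-1})h_k$ expresses $g$ as a word in the eight generators $g(w\spar{\nu})$ and elements of $\Aut(X,D_0)$.

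The main obstacle has, in effect, already been absorbed into fact~(3): exhibiting, for each inner wall orbit $o_3,\dots,o_{10}$, an explicit element of $\Aut(X)$ realizing the chamber adjacency. With this input in hand, the remainder is the standard gallery chase codified in Proposition~4.1 of~\cite{BrandhorstShimada2022} and Proposition~5.1 of~\cite{Shimada2024}; the only subtlety requiring care is tracking the right-action convention when pulling walls back from $D\spar{i}$ to $D_0$.
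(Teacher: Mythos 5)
Your proposal is correct and is essentially the paper's own route: the paper derives the proposition from fact~(3) by citing Proposition~4.1 of~\cite{BrandhorstShimada2022} (and Proposition~5.1 of~\cite{Shimada2024}), and the gallery/induction argument you write out is precisely the proof of that cited result, with the orbit bookkeeping and the right-action conventions handled correctly. The only ingredient you use beyond fact~(3) --- that two chambers of $\VX$ are joined by a gallery crossing only inner walls, via convexity of $\NX$ and local finiteness of the tessellation --- is exactly the standard input assumed there as well.
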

%
\par
\medskip
\newbigfact
The outer walls in the orbit $o_1$ are given as $D_0\cap (C_1)\sperp$,
where $C_1\in \RatsX$ are the following $8$ smooth rational curves:
\begin{equation}\label{eq:o1}
L_{0\ordplus\ordplus},\;\; L_{0\ordplus\ordminus}, \;\; L_{0\ordminus\ordplus}, \;\; L_{0\ordminus\ordminus}, \;\; 
M_{2\ordplus\ordminus},\;\; M_{2\ordminus\ordplus}, \;\; M_{3\ordplus\ordplus}, \;\; M_{3\ordminus\ordminus}.
\end{equation}
The outer walls in $o_2$ are given as $D_0\cap (C_2)\sperp$,
where $C_2$ ranges through the set
\begin{equation}\label{eq:o2}
\set{L_{\gamma_1\gamma_2\gamma_3}}{\gamma_1\ne 0}.
\end{equation}
\par
\medskip
\newbigfact
In the rightmost column of Table~\ref{table:D0wallsdata},
we present $\intf{a_{32}, a_{32}^g}$,
where $g$ is an isometry in $ \OG(\SX, \PX)$
such that $D_0^{g}$ is adjacent to $D_0$
across a wall  $w\in o_{\nu}$.
(For $\nu=3, \dots, 10$, we have $g\in \Adj(w)$.)
For a fixed wall $w$,
the element $g$ is unique up to the multiplication from the left by elements 
of $\OG(\SX, D_0)=\OG(\SX, a_{32})$.
Hence  $\intf{a_{32}, a_{32}^g}$ does not depend on the choice of $g$.
\subsection{Proof of Corollary~\ref{cor:ratstransitive}}\label{subsec:ProofOfCor}
Now we can prove Corollary~\ref{cor:ratstransitive}, even though 
Theorem~\ref{thm:rats} has not been proved yet.
Let $C$ be an arbitrary element of $\RatsX$,
and set $r:=C$.
Then $\NX\cap(r)\sperp$ contains a nonempty open subset of $(r)\sperp$,
and hence there exists an $\Lts/\SX$-chamber $D\in \VX$ such that
$D\cap (r)\sperp$ is a wall of $D$.
By Proposition~\ref{prop:fromfact3},
there exists an automorphism $g\in\Aut(X)$ such that $D^{g}=D_0$.
Then 
 $D_0\cap (r^g)\sperp$ is an outer wall of $D_0$.
From~\eqref{eq:o1} and~\eqref{eq:o2},
there exists an automorphism $g\sprime\in\Aut(X, D_0)$ such that
\[
r^{gg\sprime}=L_{0\ordplus\ordplus} \quad\textrm{or}\quad r^{gg\sprime}=L_{\ordplus 0\ordplus}.
\]
By Fact (b) in Section~\ref{subsec:AutXLMrats}, there exists an automorphism $g\spprime\in\Aut(X, \LMrats)$ such that
$r^{gg\sprime g\spprime}=L_{0\ordplus\ordplus}$.
Consequently, $\RatsX$ forms a single orbit under the action of $\Aut(X)$.
\qed
\section{Geometric description of generators}\label{sec:geometric}
In Proposition~\ref{prop:fromfact3},
we provided a finite generating set of $\Aut(X)$
in lattice-theoretic terms;
that is,
we presented a finite set of isometries of $\SX$ 
that generate the subgroup $\Aut(X)$ of $\OG(\SX, \PX)$.
In this section,
we describe these generators in terms of the geometry of $X$.
 We state  our goal  precisely in Section~\ref{subsec:goal},
 and present a strategy in Section~\ref{subsec:strategy}.
 Then, in Sections~\ref{subsec:AutXD0}--\ref {subsec:extraaut9and10},
 we describe the generators geometrically.
\subsection{Goal}\label{subsec:goal}
%
%
Recall the definition~\eqref{eq:Adj} of $\Adj(w)$.
\begin{definition}\label{def:generatorassociatedwithawall}
Let $o_{\nu}$ be an orbit  of inner walls of $D_0$ under the action of $\Aut(X, D_0)$.
We say that $g\in \Aut(X)$ is a \emph{generator associated with $o_{\nu}$}
if $g\in \Adj(w)$ for a wall $w\in o_{\nu}$.
\end{definition}
The geometric origin of the finite subgroup 
$\Aut(X, \LMrats)$ of order $48$ is well understood.
(See Section~\ref{subsec:AutXLMrats}.)
The intersection $\Aut(X, D_0)\cap \Aut(X, \LMrats)$
is of order $8$, and 
an element $g$ of $\Aut(X, \LMrats)$ belongs to $\Aut(X, D_0)$ 
if and only if its action $\rho_L(g)$ on the $L$-cube (see~\eqref{eq:rhoL} and~\eqref{eq:detrhoL})
preserves the set $\{L_{0\pm\pm}\}$ of four vertices and satisfies $\det (\rho_L(g))=1$.
\par
Our goal is to provide a geometric description of 
\begin{enumerate}[(a)]
\item 
an automorphism $g\spbr{0}$ in $\Aut(X, D_0)$ 
not belonging to $\Aut(X, \LMrats)$, 
and 
\item 
generators $g\spbr{3}, \dots, g\spbr{10}$ associated with the orbits $o_3, \dots, o_{10}$ of inner walls. 
\end{enumerate}
Then $\Aut(X)$ is generated by $\Aut(X, \LMrats)$
along with $g\spbr{0}, g\spbr{3}, \dots, g\spbr{10}$.
\subsection{Strategy}\label{subsec:strategy}
For each inner wall $w$ of $D_0$,
we calculate 
\[
a(w):=a_{32}^g, \quad\textrm{where}\quad g\in \Adj(w).
\]
Note that, since $\Aut(X, D_0)=\Aut(X, a_{32})$,
 the vector $a(w)$ does not depend on the choice of $g\in \Adj(w)$.
 Then we search for as many  automorphisms $g$ with clear geometric meaning  as possible, 
 and calculate their actions on $\SX$.
 If $a_{32}^g$ is equal to $a(w)$ for some inner wall $w\in o_{\nu}$,
 then we adopt this automorphism $g$ as $g\spbr{\nu}$.
\par
To obtain many  geometric automorphisms, 
we use the Jacobian fibrations and their sections.
Let $\phi\colon X\to \PP^1$ be a Jacobian fibration with 
the zero section $z\in \RatsX$.
Let $E_{\phi}$ be the generic fiber 
of $\phi$,
which is an elliptic curve defined over the function field of $\PP^1$
with the zero element $z$.
We regard the Mordell-Weil group $\MW(\phi)$ 
of $\phi$ as a subgroup of $\Aut(X)$ by identifying 
a rational point $s\in \MW(\phi)$ of $E_{\phi}$ 
with the translation $x\mapsto x \mathbin{\ordplus_E} s$ of $E_{\phi}$ by $s$,
where $\mathbin{\ordplus_E} $ is the addition on the elliptic curve $E_{\phi}$.
We also have an involution $\iota(\phi)\in \Aut(X)$ coming from 
the inversion $x\mapsto z\mathbin{\ordminus_E} x$ of $E_{\phi}$.
\par
Suppose that we have a configuration 
\[
\Theta=\{C_0, \dots, C_n\}
\]
of smooth rational curves $C_i\in \RatsX$
whose dual graph is a \emph{connected affine}
Dynkin diagram of type $\tilA_{\ell}$, $ \tilD_m$, or $\tilE_n$.
Then $\Theta$ yields an elliptic fibration
\[
\phi_{\Theta}\colon X\to \PP^1
\]
such that $\Theta$ is the set of irreducible components 
of a reducible fiber 
\[
\phi_{\Theta}\inv(p)=\sum a_i C_i,
\]
where the coefficients $a_i\in \ZZ_{>0}$ are determined by the $\ADE$-type of $\Theta$.
(See Table 4.1 of~\cite{Shimada2024}.)
A smooth rational curve $C$ is a section of $\phi_{\Theta}$ if and only if 
\[
\sum a_i \intf{C_i, C}=1.
\]
Hence, if we find an appropriate configuration of elements of $\RatsX$
whose dual graph contains a connected affine
Dynkin diagram, 
we obtain a Jacobian fibration of $X$ 
and some elements of its Mordell--Weil group.
The procedure for computing the Mordell--Weil group and its action on $\SX$ 
is explained in~\cite{Shimada2024}.
The action of the inversion $\iota(\phi)$ 
on $\SX$ is also easily computed. 
\par
We search for  connected affine
Dynkin diagrams in the dual graph of $\LMrats$,
and when we find one, 
we search for sections of the corresponding elliptic fibration  also in $\LMrats$.
In the following, we
present 
 dual graphs of such configurations.
In each graph, 
we depict sections by black nodes,
elements of $\Theta$ 
disjoint from the zero section $z$ by white nodes,
and 
the element of $\Theta$ 
intersecting $z$ by a gray node.
Hence the white nodes form 
a connected \emph{ordinary}
Dynkin diagram, and 
the black node connected to the gray node is the zero section $z$.
\subsection{The group \texorpdfstring{$\Aut(X, D_0)$}{AutXD0}}\label{subsec:AutXD0}
The configuration $\LMrats$ contains a sub-configuration 
depicted in Figure~\ref{fig:extraaut0ellfib}.
The white node form a Dynkin diagram of type $A_1$,
and, together with the gray node, they form an affine Dynkin diagram of type $\tilA_1$.
Therefore we obtain a Jacobian fibration 
\[
\phi\spbr{0}\colon X\to \PP^1
\]
with the zero section $z:=L\sb{0 \ordminus \ordminus }$.
This Jacobian fibration  
has four reducible fibers of type $A_7+A_7+A_1+A_1$.
The Mordell--Weil group $\MW(\phi\spbr{0})$ of $\phi\spbr{0}$ is isomorphic to $\ZZ\oplus \ZZ/4\ZZ$.
The section $s:=M\sb{3\ordplus \ordplus }$ generates the free part of $\MW(\phi\spbr{0})$.
We calculate the action  on $\SX$ of  the product
\[
g\spbr{0}:=\iota(\phi\spbr{0})\cdot s\;\; \in \;\; \Aut(X).
\]
It turns out that 
$g\spbr{0}$ is an involution,
and that it belongs  to $\Aut(X, D_0)$ but not to $ \Aut(X, D_0)\cap \Aut(X, \LMrats)$.
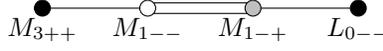
\begin{figure}
\begin{tikzpicture}[x=.7cm, y=.7cm]
%
\coordinate (e1) at (0,0);
\coordinate (th) at (2,0);
\coordinate (z) at (4,0);
\coordinate (sec) at (-2,0);
\draw [name path=rectangle] (-0, -.1) rectangle +(2,.2);
\draw (th) -- (z) ;
\draw (sec) -- (e1) ;
\filldraw [fill=white] (e1) circle [radius=3pt];
\filldraw [fill=lightgray] (th) circle [radius=3pt];
\filldraw [fill=black] (z) circle [radius=3pt];
\filldraw [fill=black] (sec) circle [radius=3pt];
\node at (e1) [below] {$M\sb{1\ordminus \ordminus }$};
\node at (th) [below] {$M\sb{1\ordminus \ordplus}$};
\node at (z) [below] {$L\sb{0\ordminus \ordminus }$};
\node at (sec) [below] {$M\sb{3\ordplus\ordplus}$};
\end{tikzpicture}
\caption{Configuration for $\phi\spbr{0}$}\label{fig:extraaut0ellfib}
\end{figure}
\begin{remark} 
The Jacobian fibration $\phi\spbr{0}$ has a beautiful property
with respect to $\LMrats$.
Since the reducible fibers of $\phi\spbr{0}$ is of type $A_7+A_7+A_1+A_1$,
there exist $20$ smooth rational curves contained in fibers of $\phi\spbr{0}$.
All these $20$ curves belong to $\LMrats$.
The other $12$ smooth rational curves in $\LMrats$ are sections of $\phi\spbr{0}$.
The section $t=L_{0\ordplus\ordminus}$ is a torsion section of order $4$,
and the following sections belong to $\LMrats$:
\[
as \mathbin{\ordplus_E} bt \qquad(a=\{-1,0,1\}, \;\; b\in \{0,1,2,3\}), \;\; \textrm{where}\;\; s:=M\sb{3\ordplus \ordplus }.
\]
\end{remark}
\subsection{A generator associated with \texorpdfstring{$o_3$}{o3}}\label{subsec:extraaut3}
Let $g\spbr{3}$ be the involution in $\Aut(X, \LMrats)$
whose action on the $L$-cube makes the exchanges 
\[
L_{\ordplus\ordplus\ordplus}\;\;\longleftrightarrow\;\; L_{\ordminus\ordminus\ordplus},
\quad 
L_{\ordplus\ordplus\ordminus}\;\;\longleftrightarrow\;\; L_{\ordminus\ordminus\ordminus},
\]
and fixes the other four vertexes $L_{\ordplus\ordminus\pm}$ and $L_{\ordminus\ordplus\pm}$.
Then $g\spbr{3}$ is a generator associated with the orbit $o_3$.
\subsection{A generator associated with \texorpdfstring{$o_4$}{o4}}\label{subsec:extraaut4}
The configuration $\LMrats$ contains a sub-configuration 
depicted in Figure~\ref{fig:extraaut4ellfib}.
The seven white nodes form a Dynkin diagram of type $E_7$,
and, together with the gray node, they form an affine Dynkin diagram of type $\tilE_7$.
Therefore we obtain a Jacobian fibration 
\[
\phi\spbr{4}\colon X\to \PP^1
\]
with the zero section $L\sb{\ordplus \ordminus \ordplus }$.
This Jacobian fibration  
has three reducible fibers of type $E_7+D_5+A_5$.
The Mordell--Weil group  of $\phi\spbr{4}$ is isomorphic to $\ZZ/2\ZZ$
and its non-trivial element is the section $ L\sb{\ordplus \ordplus \ordminus }$.
Let 
\[
g\spbr{4}\colon X\to X
\]
be the translation by the non-trivial torsion section $ L\sb{\ordplus \ordplus \ordminus }$.
Then $g\spbr{4}$ is a generator associated with the orbit $o_4$.
\begin{figure}
\begin{tikzpicture}[x=.7cm, y=.7cm]
%
\coordinate (e1) at (0, 2);
\coordinate (e2) at (-4, 0);
\coordinate (e3) at (-2, 0);
\coordinate (e4) at (0, 0);
\coordinate (e5) at (2, 0);
\coordinate (e6) at (4, 0);
\coordinate (e7) at (6, 0);
\coordinate (th) at (-6, 0);
\coordinate (z) at (-6, -2);
\coordinate (tor) at (6, -2);
\draw (e1) -- (e4) ;
\draw (th) -- (z) ;
\draw (th) -- (e2) ;
\draw (e2) -- (e3) ;
\draw (e3) -- (e4) ;
\draw (e4) -- (e5) ;
\draw (e5) -- (e6) ;
\draw (e6) -- (e7) ;
\draw (e7) -- (tor) ;
\filldraw [fill=white] (e1) circle [radius=3pt];
\filldraw [fill=white] (e2) circle [radius=3pt];
\filldraw [fill=white] (e3) circle [radius=3pt];
\filldraw [fill=white] (e4) circle [radius=3pt];
\filldraw [fill=white] (e5) circle [radius=3pt];
\filldraw [fill=white] (e6) circle [radius=3pt];
\filldraw [fill=white] (e7) circle [radius=3pt];
\filldraw [fill=lightgray] (th) circle [radius=3pt];
\filldraw [fill=black] (z) circle [radius=3pt];
\filldraw [fill=black] (tor) circle [radius=3pt];
\node at (e1) [right] {\;\;$L\sb{0 \ordminus \ordminus }$};
\node at (e2) [below] {$L\sb{\ordminus \ordminus \ordplus }$};
\node at (e3) [below] {$L\sb{\ordminus \ordminus 0 }$};
\node at (e4) [below] {$L\sb{\ordminus \ordminus \ordminus }$};
\node at (e5) [below] {$L\sb{\ordminus 0 \ordminus }$};
\node at (e6) [below] {$L\sb{\ordminus \ordplus \ordminus }$};
\node at (e7) [right] {\;\;$L\sb{0 \ordplus \ordminus }$};
\node at (th) [left] {$L\sb{0 \ordminus \ordplus }$\;\;};
\node at (z) [left] {$L\sb{\ordplus \ordminus \ordplus }$\;\;};
\node at (tor) [right] {\;\;$L\sb{\ordplus \ordplus \ordminus }$};
\end{tikzpicture}
\caption{Configuration for $\phi\spbr{4}$}\label{fig:extraaut4ellfib}
\end{figure}
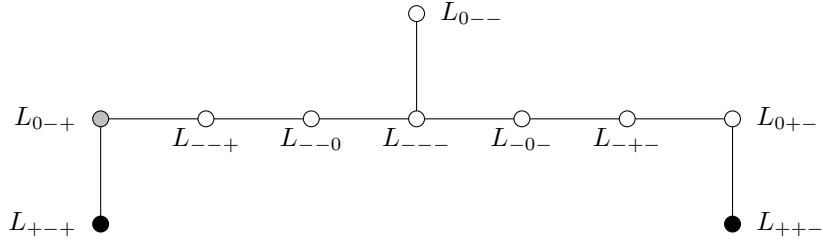
\subsection{Generators associated with \texorpdfstring{$o_5$}{o5} and \texorpdfstring{$o_6$}{o6}}\label{subsec:extraaut5and6}
The configuration $\LMrats$ contains a sub-configuration 
depicted in Figure~\ref{fig:extraaut5ellfib}.
The seven white and gray nodes form an affine Dynkin diagram of type $\tilE_6$,
and hence 
we obtain a Jacobian fibration 
\[
\phi\spbr{5}\colon X\to \PP^1
\]
with the zero section $L\sb{\ordminus 0 \ordplus }$.
This Jacobian fibration $\phi\spbr{5}$ 
has four reducible fibers of type $E_6+E_6+A_2+A_2$.
The Mordell--Weil group of $\phi\spbr{5}$ is isomorphic to $\ZZ\oplus\ZZ/3\ZZ$,
and 
the section $ L\sb{\ordplus 0 \ordminus }$ 
is of order $\infty$.
Let 
\[
g\spbr{5}\colon X\to X
\]
be the automorphism obtained by the translation by this section $L\sb{\ordplus 0 \ordminus }$.
Then $g\spbr{5}$ is a generator associated with the orbit $o_5$.
We put
\[
g\spbr{6}:=(g\spbr{5})\inv.
\]
Then $g\spbr{6}$ is a generator associated with the orbit $o_6$.
\begin{figure}
\begin{tikzpicture}[x=.7cm, y=.7cm]
%
\coordinate (e1) at (0, 2);
\coordinate (e2) at (-4, 0);
\coordinate (e3) at (-2, 0);
\coordinate (e4) at (0, 0);
\coordinate (e5) at (2, 0);
\coordinate (e6) at (4, 0);
\coordinate (th) at (2, 2);
\coordinate (z) at (4, 2);
\coordinate (sec) at (-4, 2);
\draw (e1) -- (e4) ;
\draw (e1) -- (th) ;
\draw (th) -- (z) ;
\draw (e2) -- (e3) ;
\draw (e3) -- (e4) ;
\draw (e4) -- (e5) ;
\draw (e5) -- (e6) ;
\draw (sec) -- (e2) ;
\filldraw [fill=white] (e1) circle [radius=3pt];
\filldraw [fill=white] (e2) circle [radius=3pt];
\filldraw [fill=white] (e3) circle [radius=3pt];
\filldraw [fill=white] (e4) circle [radius=3pt];
\filldraw [fill=white] (e5) circle [radius=3pt];
\filldraw [fill=white] (e6) circle [radius=3pt];
\filldraw [fill=lightgray] (th) circle [radius=3pt];
\filldraw [fill=black] (z) circle [radius=3pt];
\filldraw [fill=black] (sec) circle [radius=3pt];
\node at (e1) [above] {$L\sb{\ordminus \ordminus 0 }$\mystrutd{5pt}};
\node at (e2) [below] {$L\sb{\ordplus \ordminus \ordminus }$};
\node at (e3) [below] {$L\sb{0 \ordminus \ordminus }$};
\node at (e4) [below] {$L\sb{\ordminus \ordminus \ordminus }$};
\node at (e5) [below] {$L\sb{\ordminus 0 \ordminus }$};
\node at (e6) [below] {$L\sb{\ordminus \ordplus \ordminus }$};
\node at (th) [above] {$L\sb{\ordminus \ordminus \ordplus }$\mystrutd{5pt}};
\node at (z) [above] {$L\sb{\ordminus 0 \ordplus }$\mystrutd{5pt}};
\node at (sec) [above] {$L\sb{\ordplus 0 \ordminus }$\mystrutd{5pt}};
\end{tikzpicture}
\caption{Configuration for $\phi\spbr{5}$}\label{fig:extraaut5ellfib}
\end{figure}
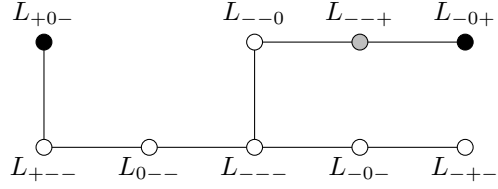
\subsection{A generator associated with \texorpdfstring{$o_7$}{o7}}\label{subsec:extraaut7}
We consider the Jacobian fibration 
\[
\phi\spbr{7}\colon X\to \PP^1
\]
associated with the configuration whose dual graph is in Figure~\ref{fig:extraaut7ellfib}.
The zero section of $\phi\spbr{7}$
is $L\sb{\ordminus \ordminus \ordplus }$.
This Jacobian fibration $\phi\spbr{7}$ 
has four reducible fibers of type $E_6+E_6+A_2+A_2$.
The Mordell--Weil group of $\phi\spbr{7}$ is isomorphic to $\ZZ\oplus\ZZ/3\ZZ$,
and the section $L\sb{\ordminus \ordminus \ordminus }$ 
is of order $\infty$.
The automorphism $g\spbr{7}\colon X\to X$
 obtained by the translation by this section $L\sb{\ordminus \ordminus \ordminus }$
 is a generator associated with the orbit $o_7$.
\begin{figure}
\begin{tikzpicture}[x=.7cm, y=.7cm]
%
\coordinate (e1) at (0, 2);
\coordinate (e2) at (-4, 0);
\coordinate (e3) at (-2, 0);
\coordinate (e4) at (0, 0);
\coordinate (e5) at (2, 0);
\coordinate (e6) at (4, 0);
\coordinate (th) at (2, 2);
\coordinate (z) at (4, 2);
\coordinate (sec) at (-4, 2);
\draw (e1) -- (e4) ;
\draw (e1) -- (th) ;
\draw (th) -- (z) ;
\draw (e2) -- (e3) ;
\draw (e3) -- (e4) ;
\draw (e4) -- (e5) ;
\draw (e5) -- (e6) ;
\draw (sec) -- (e2) ;
\filldraw [fill=white] (e1) circle [radius=3pt];
\filldraw [fill=white] (e2) circle [radius=3pt];
\filldraw [fill=white] (e3) circle [radius=3pt];
\filldraw [fill=white] (e4) circle [radius=3pt];
\filldraw [fill=white] (e5) circle [radius=3pt];
\filldraw [fill=white] (e6) circle [radius=3pt];
\filldraw [fill=lightgray] (th) circle [radius=3pt];
\filldraw [fill=black] (z) circle [radius=3pt];
\filldraw [fill=black] (sec) circle [radius=3pt];
\node at (e1) [above] {$L\sb{\ordplus \ordminus \ordplus }$\mystrutd{5pt}};
\node at (e2) [below] {$L\sb{\ordminus 0 \ordminus }$};
\node at (e3) [below] {$M\sb{2\ordminus \ordplus }$};
\node at (e4) [below] {$L\sb{\ordplus 0 \ordplus }$};
\node at (e5) [below] {$L\sb{\ordplus \ordplus \ordplus }$};
\node at (e6) [below] {$L\sb{0 \ordplus \ordplus }$};
\node at (th) [above] {$L\sb{0 \ordminus \ordplus }$\mystrutd{5pt}};
\node at (z) [above] {$L\sb{\ordminus \ordminus \ordplus }$\mystrutd{5pt}};
\node at (sec) [above] {$L\sb{\ordminus \ordminus \ordminus }$\mystrutd{5pt}};
\end{tikzpicture}
\caption{Configuration for $\phi\spbr{7}$}\label{fig:extraaut7ellfib}
\end{figure}
\subsection{A generator associated with \texorpdfstring{$o_8$}{o8}}\label{subsec:extraaut8}
We consider the Jacobian fibration 
\[
\phi\spbr{8}\colon X\to \PP^1
\]
associated with the configuration whose dual graph is in Figure~\ref{fig:extraaut8ellfib}.
The zero section of $\phi\spbr{8}$
is $L\sb{0 \ordplus \ordplus }$.
This Jacobian fibration 
has two reducible fibers of type $D_8+D_8$.
The Mordell--Weil group of $\phi\spbr{8}$ is isomorphic to $\ZZ\oplus\ZZ/2\ZZ$.
The automorphism
$g\spbr{8}\colon X\to X$
 obtained by the inversion $\iota(\phi\spbr{8})$ of the generic fiber
 is a generator associated with the orbit $o_8$.
\begin{figure}
\begin{tikzpicture}[x=.7cm, y=.7cm]
%
\coordinate (e1) at (0, 2);
\coordinate (e2) at (-2, 0);
\coordinate (e3) at (0, 0);
\coordinate (e4) at (2, 0);
\coordinate (e5) at (4, 0);
\coordinate (e6) at (6, 0);
\coordinate (e7) at (8, 0);
\coordinate (e8) at (10, 0);
\coordinate (th) at (8, 2);
\coordinate (z) at (10, 2);
\draw (e1) -- (e3) ;
\draw (e2) -- (e3) ;
\draw (e3) -- (e4) ;
\draw (e4) -- (e5) ;
\draw (e5) -- (e6) ;
\draw (e6) -- (e7) ;
\draw (e7) -- (e8) ;
\draw (e7) -- (th) ;
\draw (th) -- (z) ;
\filldraw [fill=white] (e1) circle [radius=3pt];
\filldraw [fill=white] (e2) circle [radius=3pt];
\filldraw [fill=white] (e3) circle [radius=3pt];
\filldraw [fill=white] (e4) circle [radius=3pt];
\filldraw [fill=white] (e5) circle [radius=3pt];
\filldraw [fill=white] (e6) circle [radius=3pt];
\filldraw [fill=white] (e7) circle [radius=3pt];
\filldraw [fill=white] (e8) circle [radius=3pt];
\filldraw [fill=lightgray] (th) circle [radius=3pt];
\filldraw [fill=black] (z) circle [radius=3pt];
\node at (e1) [above] {$L\sb{\ordminus \ordminus \ordminus }$\mystrutd{5pt}};
\node at (e2) [below] {$M\sb{2\ordminus \ordplus }$};
\node at (e3) [below] {$L\sb{\ordminus 0 \ordminus }$};
\node at (e4) [below] {$L\sb{\ordminus \ordplus \ordminus }$};
\node at (e5) [below] {$L\sb{0 \ordplus \ordminus }$};
\node at (e6) [below] {$L\sb{\ordplus \ordplus \ordminus }$};
\node at (e7) [below] {$L\sb{\ordplus \ordplus 0 }$};
\node at (e8) [below] {$M\sb{3\ordplus \ordplus }$};
\node at (th) [above] {$L\sb{\ordplus \ordplus \ordplus }$\mystrutd{5pt}};
\node at (z) [above] {$L\sb{0 \ordplus \ordplus }$\mystrutd{5pt}};
\end{tikzpicture}
\caption{Configuration for $\phi\spbr{8}$}\label{fig:extraaut8ellfib}
\end{figure}
\subsection{Generators associated with \texorpdfstring{$o_9$}{o9} and \texorpdfstring{$o_{10}$}{o10}}\label{subsec:extraaut9and10}
%
We consider the Jacobian fibrations 
\[
\phi\spbr{9}\colon X\to \PP^1, \quad
\phi\spbr{10}\colon X\to \PP^1
\]
associated with the two configurations whose dual graphs are in Figures~\ref{fig:extraaut10ellfib}.
Each of these Jacobian fibrations 
has two reducible fibers of type $E_8+E_8$,
and 
their Mordell--Weil groups are isomorphic to $\ZZ$.
The automorphisms $g\spbr{9}\colon X\to X$ and $g\spbr{10}\colon X\to X$ 
 obtained by the inversions $\iota(\phi\spbr{9})$
 and $\iota(\phi\spbr{10})$ of the generic fiber of $\phi\spbr{9}$
 and of $\phi\spbr{10}$
are generators associated with the orbits $o_9$ and $o_{10}$, respectively.
\begin{figure}
\begin{tikzpicture}[x=.7cm, y=.7cm]
\coordinate (e1) at (0, 2);
\coordinate (e2) at (-4, 0);
\coordinate (e3) at (-2, 0);
\coordinate (e4) at (0, 0);
\coordinate (e5) at (2, 0);
\coordinate (e6) at (4, 0);
\coordinate (e7) at (6, 0);
\coordinate (e8) at (8, 0);
\coordinate (th) at (10, 0);
\coordinate (z) at (12, 0);
\draw (e1) -- (e4) ;
\draw (e2) -- (e3) ;
\draw (e3) -- (e4) ;
\draw (e4) -- (e5) ;
\draw (e5) -- (e6) ;
\draw (e6) -- (e7) ;
\draw (e7) -- (e8) ;
\draw (e8) -- (th) ;
\draw (th) -- (z) ;
\filldraw [fill=white] (e1) circle [radius=3pt];
\filldraw [fill=white] (e2) circle [radius=3pt];
\filldraw [fill=white] (e3) circle [radius=3pt];
\filldraw [fill=white] (e4) circle [radius=3pt];
\filldraw [fill=white] (e5) circle [radius=3pt];
\filldraw [fill=white] (e6) circle [radius=3pt];
\filldraw [fill=white] (e7) circle [radius=3pt];
\filldraw [fill=white] (e8) circle [radius=3pt];
\filldraw [fill=lightgray] (th) circle [radius=3pt];
\filldraw [fill=black] (z) circle [radius=3pt];
\node at (e1) [above] {$L\sb{0 \ordminus \ordminus }$\mystrutd{5pt}};
\node at (e2) [below] {$M\sb{2\ordplus \ordminus }$};
\node at (e3) [below] {$L\sb{\ordplus 0 \ordminus }$};
\node at (e4) [below] {$L\sb{\ordplus \ordminus \ordminus }$};
\node at (e5) [below] {$L\sb{\ordplus \ordminus 0 }$};
\node at (e6) [below] {$L\sb{\ordplus \ordminus \ordplus }$};
\node at (e7) [below] {$L\sb{0 \ordminus \ordplus }$};
\node at (e8) [below] {$L\sb{\ordminus \ordminus \ordplus }$};
\node at (th) [below] {$L\sb{\ordminus \ordminus 0 }$};
\node at (z) [below] {$M\sb{3\ordplus \ordplus }$};
\end{tikzpicture}
\vskip .5cm
\begin{tikzpicture}[x=.7cm, y=.7cm]
\coordinate (e1) at (0, 2);
\coordinate (e2) at (-4, 0);
\coordinate (e3) at (-2, 0);
\coordinate (e4) at (0, 0);
\coordinate (e5) at (2, 0);
\coordinate (e6) at (4, 0);
\coordinate (e7) at (6, 0);
\coordinate (e8) at (8, 0);
\coordinate (th) at (10, 0);
\coordinate (z) at (12, 0);
\draw (e1) -- (e4) ;
\draw (e2) -- (e3) ;
\draw (e3) -- (e4) ;
\draw (e4) -- (e5) ;
\draw (e5) -- (e6) ;
\draw (e6) -- (e7) ;
\draw (e7) -- (e8) ;
\draw (e8) -- (th) ;
\draw (th) -- (z) ;
\filldraw [fill=white] (e1) circle [radius=3pt];
\filldraw [fill=white] (e2) circle [radius=3pt];
\filldraw [fill=white] (e3) circle [radius=3pt];
\filldraw [fill=white] (e4) circle [radius=3pt];
\filldraw [fill=white] (e5) circle [radius=3pt];
\filldraw [fill=white] (e6) circle [radius=3pt];
\filldraw [fill=white] (e7) circle [radius=3pt];
\filldraw [fill=white] (e8) circle [radius=3pt];
\filldraw [fill=lightgray] (th) circle [radius=3pt];
\filldraw [fill=black] (z) circle [radius=3pt];
\node at (e1) [above] {$L\sb{0 \ordminus \ordminus }$\mystrutd{5pt}};
\node at (e2) [below] {$M\sb{2\ordminus \ordplus }$};
\node at (e3) [below] {$L\sb{\ordminus 0 \ordminus }$};
\node at (e4) [below] {$L\sb{\ordminus \ordminus \ordminus }$};
\node at (e5) [below] {$L\sb{\ordminus \ordminus 0 }$};
\node at (e6) [below] {$L\sb{\ordminus \ordminus \ordplus }$};
\node at (e7) [below] {$L\sb{0 \ordminus \ordplus }$};
\node at (e8) [below] {$L\sb{\ordplus \ordminus \ordplus }$};
\node at (th) [below] {$L\sb{\ordplus \ordminus 0 }$};
\node at (z) [below] {$M\sb{3\ordminus \ordminus }$};
\end{tikzpicture}
\caption{Configurations for $\phi\spbr{9}$ and $\phi\spbr{10}$}\label{fig:extraaut10ellfib}
\end{figure}
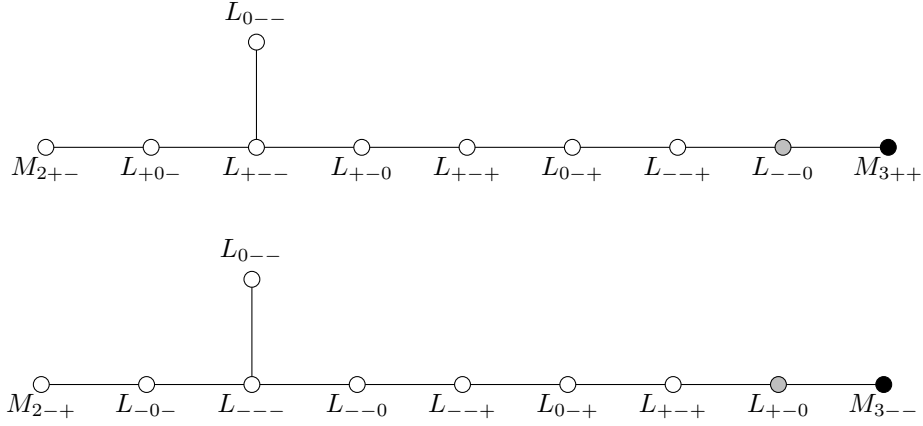
\section{Faces of \texorpdfstring{$D_0$}{D0}}\label{sec:faces}
%
%
We explain methods to enumerate the faces of $D_0$ 
and investigate them in Section~\ref{subsec:facesmu}.
Then we study the  faces of $D_0$ with codimension $2$ in Section~\ref{subsec:faces2}.
This will  lead to a presentation of $\Aut(X)$ in terms of generators and relations in Theorem~\ref{thm:relations}.
In Section~\ref{subsec:orbitCCCC},
we present
an algorithm to enumerate the faces of $\NX$ modulo $\Aut(X)$,
and prove Theorem~\ref{thm:rats}.
\subsection{Enumeration of faces of \texorpdfstring{$D_0$}{D0}}\label{subsec:facesmu}
Let $\Faces{\mu}(D)$ denote the set of faces of codimension $\mu$ of and $\Lts/\SX$-chamber $D$.
\par
The  set $\Faces{\mu}(D_0)$ of faces of $D_0$ with codimension $\mu$
can be calculated 
by induction on $\mu$ as follows.
Suppose that we have $f\in \Faces{\mu}(D_0)$.
Let $\angs{f}_{\RR} \subset \SX\tensor \RR$ denote the minimal linear subspace containing $f$,
so that the supporting linear subspace $\PPP_f$ of $f$ is equal to $\angs{f}_{\RR}\cap \PX$.
Suppose also that we have linear forms $\rho_1, \dots, \rho_k$
of $\angs{f}_{\RR}$ such that
$f$ is defined  in $\PPP_f$  by the linear inequalities $\rho_i\ge 0$ ($i=1, \dots, k$)
and that,
for each $i=1, \dots, k$,  the closed subset $f_i:=f\cap(\rho_i)\sperp$ of $f$ contains a non-empty open subset 
of the hyperplane $(\rho_i)\sperp=\set{x\in \PPP_f}{\rho_i(x)=0}$ of $\PPP_f$.
Then, using the  algorithm of linear programming
(see Algorithm 3.17 of~\cite{Shimada2015} or Section 3.4 of~\cite{DolgachevShimada2020}),
for each $i=1, \dots, k$, 
we can make a list $\rho_{i1}, \dots, \rho_{im_i}$
of linear forms 
of $\angs{f_i}_{\RR} $ such that
$f_i$ is defined in $(\rho_i)\sperp$   by the linear inequalities $\rho_{ij}\ge 0$ ($j=1, \dots, m_i$)
and that,
for each $j=1, \dots, m_i$,  the closed subset $f_{ij}:=f_i\cap(\rho_{ij})\sperp$ of $f_i$ contains a non-empty open subset 
of the hyperplane $(\rho_{ij})\sperp$ of $(\rho_i)\sperp$.
\par
Since the size of the set $\Faces{\mu}(D_0)$ grows rapidly with $\mu$ as is indicated in the table below, 
we stopped
the computation at $\mu=5$.
In the table below, the number of $\Aut(X, D_0)$-orbits in $\Faces{\mu}(D_0)$ is also given.
\[
\begin{array}{c|ccccc}
\mu & 1 & 2 & 3 & 4 & 5 \\
\hline
|\Faces{\mu}(D_0)| & 80 & 1746& 20228& 150750& 793280\mystruth{14pt}\\
\textrm{orbits} & 10 & 128& 1322& 9578 & 49880\rlap{.} \\
\end{array}
\]
\par
For each wall $w$ of $D_0$,
we choose an isometry $g_w\in \OG(\SX, \PX)$ such that
$D_0^{g_w}$ is the $\Lts/\SX$-chamber
adjacent to $D_0$ across the wall $w$
and that
\begin{equation}\label{eq:etagw}
\eta(g_w)\in \{\pm 1\}.
\end{equation}
When $w$ is an inner wall of $D_0$,
any element of $\Adj(w)$ defined by~\eqref{eq:Adj} can be taken as $g_w$.
When $w$ is an outer wall of $D_0$,
we can choose $g_w$ to be the reflection $s_r\colon x\mapsto x+\intf{x, r}r$ 
with respect to the $(-2)$-vector $r$ such that $w=D_0\cap(r)\sperp$.
(Note that $\eta(s_r)=1$.)
\par
Recall that, for a face $f$ of an $\Lts/\SX$-chamber,
we denote by $\DDD(f)$ the set of $\Lts/\SX$-chambers containing $f$.
Suppose that $f\in \Faces{\mu}(D_0)$.
All the elements of $\DDD(f)$ are enumerated by the procedure in Table~{\ref{alg:DDDf}}.
During this procedure,
$\GGG$ is a list of elements of $\OG(\SX, \PX)$
such that  $D_0^g\in \DDD(f)$ for all $g\in \GGG$,
and that, if  $g, g\sprime\in \GGG$ are distinct, then $D_0^g\ne D_0^{g\sprime}$.
Note that the condition $D_0^g\ne D_0^{g\sprime}$ is equivalent to 
 $a_{32}^g\ne a_{32}^{g\sprime}$.
Therefore the second condition for $\GGG$ is equivalent to the condition that
the list $\AAA=[a_{32}^g\mid g\in \GGG]$ is duplicate free.
\begin{table}
{\small 
\begin{algorithmic}
\State Set $\GGG:=[\id], \;\; \AAA:=[a_{32}],\;\; i:=1$.
\While{$i\le |\GGG|$}
      \State $g_i:=\textrm{the $i$th element of $\GGG$}$ (hence $D_0^{g_i}\in \DDD(f)$)
      \State $f\sprime:=f^{({g_i}\inv)}$ (hence $f\sprime\in\Faces{\mu}(D_0)$)
      \For{each wall $w$ of $D_0$}
    		\If{$w\supset f\sprime$}
			\State (Note that $D_0^{g_w {g_i}}\in \DDD(f)$ is adjacent to $D_0^{g_i}$ across the wall $w^{g_i}$ of $D_0^{g_i}$.)
			\If {$a_{32}^{g_w {g_i}}\notin \AAA$}
       				\State {Append $g_w {g_i} $ to $\GGG$, and $a_{32}^{g_w {g_i}}$ to $ \AAA$}
			\EndIf
   		\EndIf
    \EndFor
    \State  Increment  $i$ by $1$
\EndWhile
\State Set $\DDD(f)$ to be $\set{D_0^g}{g\in \GGG}$.
\State
\end{algorithmic}
}
\caption{An algorithm to find all elements of $\DDD(f)$}\label{alg:DDDf}
\end{table}
%
%
 \par
 Then we can compute the set 
\begin{equation*}\label{eq:DDDfVX}
 \DDD(f)\cap \VX=\set{D\in \DDD(f)}{D\subset \NX}.
 \end{equation*}
We can also compute the set 
\begin{equation}\label{eq:CCCf}
 \CCC(f):=\set{C\in \RatsX}{(C)\sperp\supset f}.
 \end{equation}
\begin{remark}\label{rem:gDinAutX}
For $D\in \DDD(f)$, let $g(D)$ denote the element of $\GGG$ such that $D=D_0^{g(D)}$.
Since the choice of $g_w$ satisfies~\eqref{eq:etagw},
 we have $\eta(g(D))\in \{\pm 1\}$.
 In particular,
 we have
 \[
 D\in \DDD(f) \cap \VX \;\; \Longleftrightarrow \;\; g(D) \in \Aut(X)
 \]
 by Proposition~\ref{prop:Torelli}.
\end{remark}
Computing these data for all $f\in \Faces{\mu}(D_0)$
and examining the dual graph of $\CCC(f)$ for each $f$,
 we calculate the subset 
 \[
 \Faces{\mu}(D_0, \tau):=\set{f\in \Faces{\mu}(D_0)}{ \CCC(f)\in \CCCC(\tau)}
 \]
 of $\Faces{\mu}(D_0)$ for each $\ADE$-type $\tau$.
 The group $\Aut(X, D_0)$ acts on $ \Faces{\mu}(D_0, \tau)$.
 The sizes of the set $ \Faces{\mu}(D_0, \tau)$ and 
 the numbers of $\Aut(X, D_0)$-orbits in $ \Faces{\mu}(D_0, \tau)$ are given in Table~\ref{table:FmuD0taus}.
 %
%
%
%
\begin{table} 
\[
\begin{array}{clcc}
\mu & \tau & | \Faces{\mu}(D_0, \tau)| & \textrm{orbits}\mystrutd{5.5pt}\\
\hline 
1 & A_{1} & 24 & 2 \mystruth{11pt}\\ 
2 & 2A_{1} & 276 & 23 \\ 
2 & A_{2} & 32 & 3 \\ 
3 & 3A_{1} & 1936 &126 \\ 
3 & A_{1}+A_{2} & 592 &37 \\ 
3 & A_{3} & 712 &45 
\end{array}
\qquad \qquad 
\begin{array}{clcc}
\mu & \tau & | \Faces{\mu}(D_0, \tau)| & \textrm{orbits}\mystrutd{5.5pt}\\
\hline 
4 & 4A_{1} & 8802 & 572 \mystruth{11pt} \\ 
4 & 2A_{1}+A_{2} &5056 &322 \\ 
4 & A_{1}+A_{3} & 10760 & 673 \\ 
4 & 2A_{2} & 384 & 32 \\ 
4 & A_{4} & 96 &8 \\ 
4 & D_{4} & 160 &10 
\end{array}
%
\]
\caption{Sizes of $ \Faces{\mu}(D_0, \tau)$ and the number of $\Aut(X, D_0)$-orbits}\label{table:FmuD0taus}
\end{table}
\subsection{Faces of codimension \texorpdfstring{$2$}{2}}\label{subsec:faces2}
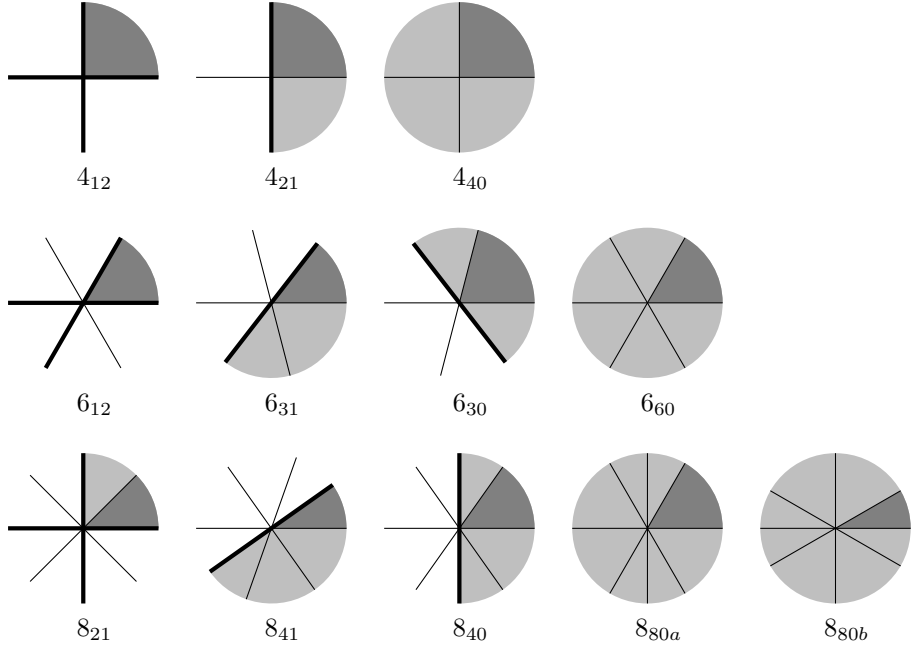
\begin{figure}
\begin{tikzpicture}[x=.5cm,y=.5cm]
\begin{scope}[xshift=0cm, yshift=0cm]
\coordinate (P0) at (2,0);
\coordinate (P1) at (1.4142,1.4142);
\coordinate (P2) at (0,2);
\coordinate (P3) at (-1.4142,1.4142);
\coordinate (P4) at (-2,0);
\coordinate (P5) at (-1.4142,-1.4142);
\coordinate (P6) at (0,-2);
\coordinate (P7) at (1.4142,-1.4142);
\fill [fill=gray, opacity=1] (P0) arc (0:45:2) --(0,0)--cycle;
\fill [fill=gray, opacity=.5] (P0) arc (0:90:2) --(0,0)--cycle;
\draw [ultra thick] (P0)--(P4);
\draw [thin] (P1)--(P5);
\draw [ultra thick] (P2)--(P6);
\draw [thin] (P3)--(P7);
\node at (.3, -2.7) {$8_{21}$};
\end{scope}
\begin{scope}[xshift=2.5cm, yshift=0cm]
\coordinate (P0) at (2,0);
\coordinate (P1) at (1.632993162, 1.154700538);
\coordinate (P2) at (0.6666666674, 1.885618083);
\coordinate (P3) at (-1.154700538, 1.632993162);
\coordinate (P4) at (-2,0);
\coordinate (P5) at (-1.632993162, -1.154700538);
\coordinate (P6) at (-0.6666666674, -1.885618083);
\coordinate (P7) at (1.154700538, -1.632993162);
\fill [fill=gray, opacity=1] (P0) arc (0:35.26438968:2) --(0,0)--cycle;
\fill [fill=gray, opacity=.5] (P5) arc (-144.7356103:35.26438968:2) --(0,0)--cycle;
\draw [thin] (P0)--(P4);
\draw [ultra thick] (P1)--(P5);
\draw [thin] (P2)--(P6);
\draw [thin] (P3)--(P7);
\node at (.3, -2.7) {$8_{41}$};
\end{scope}
\begin{scope}[xshift=5cm, yshift=0cm]
\coordinate (P0) at (2,0);
\coordinate (P1) at (1.154700539, 1.632993162);
\coordinate (P2) at (0,2);
\coordinate (P3) at (-1.154700539, 1.632993162);
\coordinate (P4) at (-2,0);
\coordinate (P5) at (-1.154700539, -1.632993162);
\coordinate (P6) at (0,-2);
\coordinate (P7) at (1.154700539, -1.632993162);
\fill [fill=gray, opacity=1] (P0) arc (0:54.73561030:2) --(0,0)--cycle;
\fill [fill=gray, opacity=.5] (P6) arc (-90:90:2) --(0,0)--cycle;
\draw [thin] (P0)--(P4);
\draw [thin] (P1)--(P5);
\draw [ultra thick] (P2)--(P6);
\draw [thin] (P3)--(P7);
\node at (.3, -2.7) {$8_{40}$};
\end{scope}
\begin{scope}[xshift=7.5cm, yshift=0cm]
\coordinate (P0) at (2,0);
\coordinate (P1) at (1.000000000, 1.732050807);
\coordinate (P2) at (0,2);
\coordinate (P3) at (-1.000000000, 1.732050807);
\coordinate (P4) at (-2,0);
\coordinate (P5) at (-1.000000000, -1.732050807);
\coordinate (P6) at (0,-2);
\coordinate (P7) at (1.000000000, -1.732050807);
\fill [fill=gray, opacity=1] (P0) arc (0:60:2) --(0,0)--cycle;
\fill [fill=gray, opacity=.5] (P0) arc (0:360:2) --cycle;
\draw [thin] (P0)--(P4);
\draw [thin] (P1)--(P5);
\draw [thin] (P2)--(P6);
\draw [thin] (P3)--(P7);
\node at (.3, -2.7) {$8_{80a}$};
\end{scope}
\begin{scope}[xshift=10cm, yshift=0cm]
\coordinate (P0) at (2,0);
\coordinate (P1) at (1.732050807, 1.000000000);
\coordinate (P2) at (0,2);
\coordinate (P3) at (-1.732050807, 1.000000000);
\coordinate (P4) at (-2,0);
\coordinate (P5) at (-1.732050807, -1.000000000);
\coordinate (P6) at (0,-2);
\coordinate (P7) at (1.732050807, -1.000000000);
\fill [fill=gray, opacity=1] (P0) arc (0:30:2) --(0,0)--cycle;
\fill [fill=gray, opacity=.5] (P0) arc (0:360:2) --cycle;
\draw [thin] (P0)--(P4);
\draw [thin] (P1)--(P5);
\draw [thin] (P2)--(P6);
\draw [thin] (P3)--(P7);
\node at (.3, -2.7) {$8_{80b}$};
\end{scope}
\begin{scope}[xshift=0.0cm, yshift=3cm]
\coordinate (P0) at (2,0);
\coordinate (P1) at (1,1.7321);
\coordinate (P2) at (-1,1.7321);
\coordinate (P3) at (-2,0);
\coordinate (P4) at (-1,-1.7321);
\coordinate (P5) at (1,-1.7321);
\fill [fill=gray, opacity=1] (P0) arc (0:60:2) --(0,0)--cycle;
\fill [fill=gray, opacity=.5] (P0) arc (0:60:2) --(0,0)--cycle;
\draw [ultra thick] (P0)--(P3);
\draw [ultra thick] (P1)--(P4);
\draw [thin] (P2)--(P5);
\node at (.3, -2.7) {$6_{12}$};
\end{scope}
\begin{scope}[xshift=2.5cm, yshift=3cm]
\coordinate (P0) at (2,0);
\coordinate (P1) at (1.224744872, 1.581138830);
\coordinate (P2) at (-0.4999999982, 1.936491674);
\coordinate (P3) at (-2,0);
\coordinate (P4) at (-1.224744872, -1.581138830);
\coordinate (P5) at (0.4999999982, -1.936491674);
\fill [fill=gray, opacity=1] (P0) arc (0:52.23875607:2) --(0,0)--cycle;
\fill [fill=gray, opacity=.5] (P4) arc (-127.7612439:52.23875607:2) --(0,0)--cycle;
\draw [thin] (P0)--(P3);
\draw [ultra thick] (P1)--(P4);
\draw [thin] (P2)--(P5);
\node at (.3, -2.7) {$6_{31}$};
\end{scope}
\begin{scope}[xshift=5.0cm, yshift=3cm]
\coordinate (P0) at (2,0);
\coordinate (P1) at (0.4999999994, 1.936491673);
\coordinate (P2) at (-1.224744872, 1.581138830);
\coordinate (P3) at (-2,0);
\coordinate (P4) at (-0.4999999994, -1.936491673);
\coordinate (P5) at (1.224744872, -1.581138830);
\fill [fill=gray, opacity=1] (P0) arc (0:75.52248782:2) --(0,0)--cycle;
\fill [fill=gray, opacity=.5] (P5) arc (-52.23875607:127.7612439:2) --(0,0)--cycle;
\draw [thin] (P0)--(P3);
\draw [thin] (P1)--(P4);
\draw [ultra thick] (P2)--(P5);
\node at (.3, -2.7) {$6_{30}$};
\end{scope}
\begin{scope}[xshift=7.5cm, yshift=3cm]
\coordinate (P0) at (2,0);
\coordinate (P1) at (1,1.7321);
\coordinate (P2) at (-1,1.7321);
\coordinate (P3) at (-2,0);
\coordinate (P4) at (-1,-1.7321);
\coordinate (P5) at (1,-1.7321);
\fill [fill=gray, opacity=1] (P0) arc (0:60:2) --(0,0)--cycle;
\fill [fill=gray, opacity=.5] (P0) arc (0:360:2) -- cycle;
\draw [thin] (P0)--(P3);
\draw [thin] (P1)--(P4);
\draw [thin] (P2)--(P5);
\node at (.3, -2.7) {$6_{60}$};
\end{scope}
\begin{scope}[xshift=0.0cm, yshift=6cm]
\coordinate (P0) at (2,0);
\coordinate (P1) at (0,2);
\coordinate (P2) at (-2,0);
\coordinate (P3) at (0,-2);
\fill [fill=gray, opacity=1] (P0) arc (0:90:2) --(0,0)--cycle;
\fill [fill=gray, opacity=.5] (P0) arc (0:90:2) --(0,0)--cycle;
\draw [ultra thick] (P0)--(P2);
\draw [ultra thick] (P1)--(P3);
\node at (.3, -2.7) {$4_{12}$};
\end{scope}
\begin{scope}[xshift=2.5cm, yshift=6cm]
\coordinate (P0) at (2,0);
\coordinate (P1) at (0,2);
\coordinate (P2) at (-2,0);
\coordinate (P3) at (0,-2);
\fill [fill=gray, opacity=1] (P0) arc (0:90:2) --(0,0)--cycle;
\fill [fill=gray, opacity=.5] (P3) arc (-90:90:2) --(0,0)--cycle;
\draw [thin] (P0)--(P2);
\draw [ultra thick] (P1)--(P3);
\node at (.3, -2.7) {$4_{21}$};
\end{scope}
\begin{scope}[xshift=5cm, yshift=6cm]
\coordinate (P0) at (2,0);
\coordinate (P1) at (0,2);
\coordinate (P2) at (-2,0);
\coordinate (P3) at (0,-2);
\fill [fill=gray, opacity=1] (P0) arc (0:90:2) --(0,0)--cycle;
\fill [fill=gray, opacity=.5] (P0) arc (0:360:2) --cycle;
\draw [thin] (P0)--(P2);
\draw [thin] (P1)--(P3);
\node at (.3, -2.7) {$4_{40}$};
\end{scope}
\end{tikzpicture}
\caption{Types of codimension-$2$ faces of $D_0$}\label{fig:Faces2}
\end{figure}
We examine the set $\Faces{2}(D_0)$.
The faces in $\Faces{2}(D_0)$ are classified into $12$ types,
which are illustrated in Figure~\ref{fig:Faces2}.
We choose a general point $p$ of $f$,
and consider a small disk $\varDelta$ centered at $p$ 
within a $2$-dimensional linear subspace in $\PX$ intersecting $f$ at $p$ orthogonally.
In Figure~\ref{fig:Faces2}, 
we depict 
the intersections of $\varDelta$ with the $\Lts/\SX$-chambers $D\in \DDD(f)$ containing $f$.
The dark gray sector is $\varDelta\cap D_0$,
and the dark and light gray sectors are $\varDelta\cap D$ with $D\subset \NX$.
Thick lines indicate $\varDelta\cap (C)\sperp$, where $C\in \RatsX$ is a smooth rational curves such that $(C)\sperp\supset f$.
\par
The type is denoted as $\sigma:=n_{lr}$,
where $n$ is the size of the set $\DDD(f)$, 
$l$ is the size of $\DDD(f)\cap \VX$,
and $r$ is the number of $C\in \CCC(f)$ such that $(C)\sperp\cap D_0$ is a wall of $D_0$.
The size of the set $\Faces{2}(D_0)_{\sigma}$ of faces of type $\sigma$ is 
listed in the second column of Table~\ref{table:angles}.
The group $\Aut(X, D_0)$ acts on $\Faces{2}(D_0)_{\sigma}$.
The numbers of $\Aut(X, D_0)$-orbits in 
$\Faces{2}(D_0)_{\sigma}$ are also presented.
\par
We index the $\Lts/\SX$-chambers $D\in \DDD(f)$
as $D_0, \dots, D_{n-1}$,
starting $D_0$ and proceeding around $f$. 
Then the dihedral angle $\theta_i$ of $D_i$ at $p$
for $i=0, 1, \dots, n/2-1$ are given in the third column of Table~\ref{table:angles}
by means of the rational number
\[
(\cos\theta_i)^2=\frac{\intf{v,v\sprime}^2}{\intf{v,v} \intf{v\sprime,v\sprime}},
\]
where $(v)\sperp\cap D_i$ and $(v\sprime)\sperp\cap D_i$ are the two walls of $D_i$ containing $f$.
The fourth column of Table~\ref{table:angles}
provides all possible pairs $kk\sprime=\{k, k\sprime\}$
of the indexes of orbits $o_{k}$ and $o_{k\sprime}$ to which the walls $(v)\sperp\cap D_0$ and $(v\sprime)\sperp\cap D_0$ 
of $D_0$ containing $f$ belong.
Here the index $10$ of $o_{10}$ 
is denoted by $t$ so that $1t$ and $2t$ mean $\{1, 10\}$ and $\{2, 10\}$, respectively.
\begin{table}
\[
\begin{array}{clllllll} 
\textrm{type\;} \sigma &| \Faces{2}(D_0)_{\sigma}| & \textrm{orbits} &\rlap{$(\cos\theta_i)^2$} && && \textrm{pairs of walls}
\mystrutd{5.5pt}\\
\hline
4_{12} &244&21 & 0 & 0 &&       & 11,12,22 \mystruth{11pt} \\
4_{21} &1096& 73& 0 & 0 &&       & 13,\dots,19, 1t, 23, \dots, 29, 2t\\
4_{40} &88& 8& 0 & \rlap{$0$}\phantom{1/16} &&        & 34, 35, 36, 37\\
6_{12} &32& 3& 1/4 & 1/4 & 1/4 & \phantom{1/16}    & 12, 22 \\
6_{31} &8& 1& 3/8 & 3/8 & 1/16 &    & 13\\
6_{30} &4& 1& 1/16 & 3/8 & 3/8 &    & 33\\
6_{60} &2& 1& 1/4 & 1/4 & 1/4 &     & 33\\
8_{21} &32& 2& 1/2 & 1/2 & 1/2 & 1/2   & 14, 24 \\
8_{41} &112& 8& 2/3 & 2/3 & 1/3 & 1/3  & 15,16, 25, 26, 27\\
8_{40} &112& 8& 1/3 & 2/3 & 2/3 & 1/3  & 45, 46,4 7\\
8_{80a} &8& 1& 1/4 & 3/4 & 3/4 & 1/4   & 38\\
8_{80b} &8& 1& 3/4 & 1/4 & 1/4 & 3/4   & 34
\end{array}
\]
\vskip 2mm
\caption{Data of codimension-$2$ faces of $D_0$}\label{table:angles}
\end{table}
%
\par
Let $w\in \Faces{1}(D_0)$ be a wall of $D_0$ that belongs to the orbit $o_i$.
The numbers and types of codimension-$2$ faces $f\in \Faces{2}(D_0)$ such that $f\subset w$
are given in Table~\ref{table:wandf}.
%
%
\begin{table}
\[
\begin{array}{ccl}
\textrm{orbit} & \textrm{total number} &\textrm{types and numbers} \mystrutd{4pt} \\ 
\hline 
o_{1} & 77 &(4_{12})^{21}(4_{21})^{45}(6_{12})^{2}(6_{31})^{1}(8_{21})^{2}(8_{41})^{6} \mystruth{11pt} \\ 
o_{2} & 74 &(4_{12})^{20}(4_{21})^{46}(6_{12})^{3}(8_{21})^{1}(8_{41})^{4} \\ 
o_{3} & 53 &(4_{21})^{22}(4_{40})^{22}(6_{31})^{2}(6_{30})^{2}(6_{60})^{1}(8_{80a})^{2}(8_{80b})^{2} \\ 
o_{4} & 42 &(4_{21})^{20}(4_{40})^{3}(8_{21})^{4}(8_{40})^{14}(8_{80b})^{1} \\ 
o_{5} & 32 &(4_{21})^{19}(4_{40})^{3}(8_{41})^{5}(8_{40})^{5} \\ 
o_{6} & 32 &(4_{21})^{19}(4_{40})^{3}(8_{41})^{5}(8_{40})^{5} \\ 
o_{7} & 30 &(4_{21})^{20}(4_{40})^{2}(8_{41})^{4}(8_{40})^{4} \\ 
o_{8} & 22 &(4_{21})^{20}(8_{80a})^{2} \\ 
o_{9} & 19 &(4_{21})^{19} \\ 
o_{10} & 19 &(4_{21})^{19} \\
\end{array}
\]
\vskip 5pt
\caption{Faces of codimension $2$ that bound a wall}\label{table:wandf}
\end{table}
\subsection{Orbit decomposition of \texorpdfstring{$\CCCC(\tau)$}{Ctau} by \texorpdfstring{$\Aut(X)$}{Aut(X)}}\label{subsec:orbitCCCC}
We present a method to calculate the orbit decomposition of the action of $\Aut(X)$
on the set $\CCCC(\tau)$ 
of $\ADE$-configurations of smooth rational curves of type $\tau$.
This method requires the sets $\Faces{\mu}(D_0)$ and $\Faces{\mu+1}(D_0)$ of all faces of codimension $\mu$ and $\mu+1$,
where $\mu$ is the Milnor number of $\tau$.
From the set $\Faces{1}(D_0)$, \dots, $\Faces{5}(D_0)$,
we obtain the orbit decomposition 
of $\CCCC(\tau)$ for all $\ADE$-types $\tau$ with $\mu\le 4$, proving Theorem~\ref{thm:rats}.
\par
Let $\CCC=\{C_1, \dots, C_{\mu}\}$ be an element of $\CCCC(\tau)$.
We define
\[
\PPPC=(C_1)\sperp\cap \dots \cap (C_{\mu})\sperp,
\]
which is a linear subspace of codimension $\mu$ in $\PX$.
\begin{proposition}\label{prop:PPPCgivesface}
The intersection $\PPPC\cap\NX$ is a face of codimension $\mu$ of the $\SX/\SX$-chamber $\NX$.
\end{proposition}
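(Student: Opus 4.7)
The plan is to verify, for the closed subset $f := \PPPC \cap \NX$, the three defining conditions of a codimension-$\mu$ face of the $\SX/\SX$-chamber $\NX$, taking the supporting linear subspace to be $\PPPC$ itself. First, $\PPPC$ is a linear subspace of $\PX$ of codimension exactly $\mu$: since the dual graph of $\CCC = \{C_1, \dots, C_\mu\}$ is the Dynkin diagram of the ADE-type $\tau$, the Gram matrix $(\langle C_i, C_j \rangle)$ is the negative of the Cartan matrix of $\tau$, hence negative definite and in particular nondegenerate, so $C_1, \dots, C_\mu$ are linearly independent in $\SX \otimes \RR$. Condition (ii) of the definition---that $\PPPC$ is disjoint from the interior of $\NX$---is then immediate: the interior of $\NX$ is the ample cone, and any ample class $a$ satisfies $\langle a, C_i \rangle > 0$ for each $i$, so $a \notin \PPPC$.

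The substantive step is condition (iii), that $f$ contains a nonempty open subset of $\PPPC$. To produce a candidate interior point, I would invoke the classical fact that an ADE-configuration of smooth rational curves on a projective K3 surface can be simultaneously contracted: there exists a birational morphism $\pi \colon X \to \barX$ onto a projective surface $\barX$ with rational double points of type $\tau$, whose exceptional locus is exactly $C_1 \cup \dots \cup C_\mu$. Fixing a Cartier ample class $\bar h$ on $\barX$ and setting $h := \pi^* \bar h$, one checks that $\langle h, C_i \rangle = 0$ for every $i$ (so $h \in \PPPC$), that $\langle h, C' \rangle = \langle \bar h, \pi_* C' \rangle \ge 1$ for every $C' \in \RatsX \setminus \CCC$ (since $\pi_* C'$ is a nonzero effective class and the pairing takes integer values on $\SX$), and that $\langle h, h \rangle = \langle \bar h, \bar h \rangle > 0$.

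The hard part is to upgrade this single point $h$ to a full open neighborhood of $h$ in $\PPPC$ lying inside $\NX$, since infinitely many curves $C' \in \RatsX$ must be controlled simultaneously under perturbation. For this I would combine two ingredients. First, for any $N > 0$ the set $\{\, C \in \RatsX : \langle h, C \rangle \le N \,\}$ is finite, by the boundedness of curves of bounded degree on the projective surface $\barX$. Second, for any vector $w$ in the orthogonal complement of $h$ in $\SX \otimes \RR$ (which is negative definite), the Cauchy--Schwarz-type bound $|\langle w, C \rangle|^2 \le (-\langle w, w \rangle)\bigl(\langle h, C \rangle^2/\langle h, h \rangle + 2\bigr)$ holds for every $C \in \RX$. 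Combining these, a sufficiently small open ball $U$ around $h$ in $\PPPC$ satisfies $\langle v, C \rangle > 0$ for every $v \in U$ and every $C \in \RatsX \setminus \CCC$---handling the finitely many $C$ with small $\langle h, C \rangle$ by pointwise continuity, and the remaining ones by the quantitative bound applied to $w = v - h$. Since $\langle v, C_i \rangle = 0$ automatically for $v \in \PPPC$, we obtain $U \subset \NX$, verifying (iii) and completing the proof.
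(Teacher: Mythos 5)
Your proposal is correct in substance but reaches the key point of $\PPPC$ by a genuinely different route. Where you invoke the classical simultaneous contraction $\pi\colon X\to\barX$ of the $\ADE$-configuration and take $h=\pi^*\bar h$ for $\bar h$ ample, the paper constructs the required class by pure linear algebra: fixing an ample $a$, it sets $p:=a-\sum t_iC_i$ where the $t_i$ are obtained by applying $M\inv$ (with $M$ the Gram matrix of $\CCC$) to the vector of intersection numbers $\intf{a,C_i}$, and uses the fact that all entries of the inverse Cartan matrix are $\le 0$ to conclude $t_i\le 0$, hence $\intf{p,C\sprime}=\intf{a,C\sprime}-\sum t_i\intf{C_i,C\sprime}>0$ for every $C\sprime\in\RatsX\setminus\CCC$. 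The paper's argument is self-contained and purely lattice-theoretic; yours imports a nontrivial geometric theorem whose standard proof itself produces a nef-and-big class orthogonal to exactly the contracted curves (typically by the very computation the paper performs), so there is a mild foundational circularity in using it here, though it is certainly an acceptable citation of a classical fact. On the final step, you are actually more careful than the paper, which simply asserts that a small neighborhood of $p$ in $\PPPC$ lies in $\NX$ (this follows from the local finiteness of the walls $(C\sprime)\sperp$, cf.~\eqref{eq:ineqs}); your finiteness-plus-Cauchy--Schwarz argument is a valid quantitative substitute, with the one small imprecision that $w=v-h$ need not lie in $h\sperp$, so you should first decompose $v=\beta h+w$ with $w\perp h$ and $\beta$ near $1$ before applying your bound.
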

\begin{proof}
Since $\intf{a, C_i}>0$ for any ample class $a$, 
it follows that $\PPPC$ is disjoint from the interior of $\NX$.
It suffices to show that there exists a point $p$ on $\PPPC$ such that
$\intf{p,C\sprime}>0$ holds for any smooth rational curve $C\sprime$ on $X$ that is not a member of $\CCC$.
We define $m_{ij}:=\intf{C_i, C_j}$, and consider the $\mu\times \mu$ matrix $M:=(m_{ij})$,
which is the Gram matrix of the negative-definite root lattice of type $\tau$
with respect to the standard basis.
It is well known that every entry of the inverse matrix $M\inv$ is $\le 0$.
Fixing an ample class $a$, 
we define $t_1, \dots, t_{\mu}\in \QQ$ by 
\[
\left[\begin{array}{c} t_1 \\ \vdots \\ t_{\mu} \end{array}\right]=
M\inv \left[\begin{array}{c} \intf{a, C_1} \\ \vdots \\\intf{a, C_{\mu}} \end{array}\right].
\]
Since $\intf{a, C_i}>0$ for $i=1, \dots, \mu$,
we have $t_{i}\le 0$ for $i=1, \dots, \mu$.
We put
\[
p:=a-(t_1 C_1+ \dots +t_{\mu} C_{\mu}).
\]
Then we have $\intf{p,C_i}=0$ for $i=1, \dots, \mu$,
and 
\[
\intf{p, p}=\intf{p, a}=\intf{a, a}-(t_1 \intf{C_1, a}+ \dots +t_{\mu} \intf{C_{\mu}, a})>0.
\]
Thus we have $p\in \PPPC$.
For any $C\sprime \in \RatsX$
such that $C\sprime\notin\CCC$, 
we have $\intf{a, C\sprime}>0$ and $\intf{C_i, C\sprime}\ge 0$ for $i=1, \dots, \mu$.
Hence $\intf{p,C\sprime}>0$ holds.
Therefore a small neighborhood of $p$ in $\PPPC$ is contained in $\PPPC\cap \NX$.
\end{proof}
Let $[\CCC]\sperp$ be the orthogonal complement of 
the sublattice $[\CCC]$ of $\SX$ generated by the elements of $\CCC$.
Then $[\CCC]\sperp$ is a primitive sublattice of $\SX$ 
 with signature $(1, 18-\mu)$,
and 
\[
\PPPC:=\PX\cap ([\CCC]\sperp\tensor\RR)
\]
is a positive cone of $[\CCC]\sperp$.
The tessellation of $\PX$ by the $\SX/\SX$-chambers induces 
a tessellation of $\PPPC$ by the $\SX/[\CCC]\sperp$-chambers,
and $\PPPC\cap\NX$ is one of these $\SX/[\CCC]\sperp$-chambers.
On the other hand, since $\SX$ is embedded primitively into $\Lts$ in Section~\ref{subsec:Borcherds},
we can regard $[\CCC]\sperp$ as a primitive sublattice of $\Lts$,
and every $\SX/[\CCC]\sperp$-chamber is tessellated by $\Lts/[\CCC]\sperp$-chambers.
Note that every $\Lts/[\CCC]\sperp$-chamber is of the form $\PPPC\cap D$,
where $D$ is an $\Lts/\SX$-chamber and $\PPPC\cap D$ is a face of $D$ 
with supporting linear subspace $\PPPC$.
The algorithm below is
Borcherds' method applied to 
the tessellation of the $\SX/[\CCC]\sperp$-chamber 
$\PPPC\cap\NX$ by $\Lts/[\CCC]\sperp$-chambers $\PPPC\cap D$.
\par
We consider the map
\begin{equation}\label{eq:inimap}
\Faces{\mu}(D_0, \tau)\;\to\; \CCCC(\tau)
\end{equation}
given by $f\mapsto \CCC(f)$,
where $\CCC(f)$ is defined by~\eqref{eq:CCCf}.
Let $\CCC$ be an arbitrary element of $\CCCC(\tau)$.
By Proposition~\ref{prop:PPPCgivesface},
there exists an element $D$ of $ \VX$ 
such that $f_D:=\PPPC\cap D$ is a face of $D$
with supporting linear subspace $\PPPC$. 
Since $\Aut(X)$ acts on $\VX$ transitively,
there exists an automorphism $g\in \Aut(X)$ such that
$D^g=D_0$.
Then we have $f_D^g\in \Faces{\mu}(D_0, \tau)$,
and the mapping~\eqref{eq:inimap} maps $f_D^g$ to $\CCC^g$.
Therefore
 the mapping~\eqref{eq:inimap} induces a surjective map
\begin{equation}\label{eq:surjmap}
\Faces{\mu}(D_0, \tau)\;\surj\; \CCCC(\tau)/\Aut(X).
\end{equation}
\par
Fix an element $\CCC$ of $\CCCC(\tau)$.
We define
\begin{eqnarray*}
\tilVC&:=&\set{D\in \VX}{\textrm{$\PPPC\cap D$ is a face of $D$ with supporting linear subspace $\PPPC$}}\\
&=&\set{D\in \VX}{\textrm{$\PPPC\cap D$ contains a nonempty open subset of $\PPPC$}},\\
\VC&:=&\set{\PPPC\cap D}{D\in \tilVC}.
\end{eqnarray*}
Then the stabilizer subgroup 
\[
\Aut(X, \CCC):=\set{g\in \Aut(X)}{\textrm{$g$ preserves $\CCC$}}
\]
of $\CCC$
acts on $\tilVC$.
\begin{remark}
The mapping $D\mapsto \PPPC\cap D$ from $\tilVC$ to $\VC$
may not be a bijection.
For example, when $\tau=2A_1$, 
if $\PPPC\cap D_0$ is a codimension-$2$ face of $D_0$ with type $8_{21}$ (see Figure~\ref{fig:Faces2}),
then there exists an $\Lts/\SX$-chamber $D\sprime$ such that  $D\sprime\ne D_0$  and that 
$\PPPC\cap D\sprime=\PPPC\cap D_0$.
\end{remark}
For $D\in \tilVC$, there exists an automorphism $g(D)\in \Aut(X)$ 
such that $D=D_0^{g(D)}$.
Then 
\[
(\PPPC\cap D)^{(g(D)\inv)}=\PPP_{\CCC^{(g(D)\inv)}}\cap D_0
\]
is a face of $D_0$ of codimension $\mu$, and 
this face is a member of $\Faces{\mu}(D_0, \tau)$.
Recall that $\Aut(X, D_0)$ acts on $\Faces{\mu}(D_0, \tau)$.
The choice of $g(D)\in \Aut(X)$ such that $D=D_0^{g(D)}$ is unique up to the left multiplication of elements of $\Aut(X, D_0)$.
More generally, if $D\sprime \in \tilVC$
is equal to $D^{\gamma}$ for an element $\gamma \in \Aut(X, \CCC)$,
then there exists an element $h\in \Aut(X, D_0)$ such that
$h g(D) \gamma =g(D\sprime)$.
Since 
\[
(\PPPC\cap D\sprime)^{(g(D\sprime)\inv)}=(\PPPC\cap D\sprime)^{(\gamma\inv g(D)\inv h\inv)}=(\PPPC\cap D)^{(g(D)\inv h\inv)},
\] 
the mapping $D\mapsto (\PPPC\cap D)^{(g(D)\inv)}$ induces a mapping
\[
\Phi_{\CCC}\;\;\colon\;\; \tilVC \to \Faces{\mu}(D_0, \tau)/\Aut(X, D_0)
\]
that factors through the natural projection
\[
\tilVC\surj \tilVC/\Aut(X, \CCC).
\]
\begin{proposition}
For $\CCC\in \CCCC(\tau)$ and $\CCC\sprime\in \CCCC(\tau)$, the following are equivalent:
\begin{itemize}
\item[{\rm (i)}] $\CCC$ and $\CCC\sprime$ are in the same $\Aut(X)$-orbit. 
\item[{\rm (ii)}] The images of $\Phi_{\CCC}$ and of $\Phi_{\CCC\sprime}$ are the same. 
\item[{\rm (iii)}] The images of $\Phi_{\CCC}$ and of $\Phi_{\CCC\sprime}$ have nonempty intersection. 
\end{itemize}
\end{proposition}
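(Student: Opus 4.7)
The plan is to prove the cycle (i) $\Rightarrow$ (ii) $\Rightarrow$ (iii) $\Rightarrow$ (i). The implication (ii) $\Rightarrow$ (iii) is immediate once we note that $\tilVC$ (and hence its image under $\Phi_\CCC$) is nonempty: by Proposition~\ref{prop:PPPCgivesface} the subset $\PPPC \cap \NX$ is a face of $\NX$ of codimension $\mu$, and since $\NX$ is tessellated by $\Lts/\SX$-chambers, at least one $D \in \VX$ has $\PPPC \cap D$ full-dimensional in $\PPPC$, forcing $D \in \tilVC$. For (i) $\Rightarrow$ (ii), I would suppose $\CCC\sprime = \CCC^\gamma$ for some $\gamma \in \Aut(X)$. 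Right multiplication by $\gamma$ gives a bijection $\tilVC \to \tilV_{\CCC\sprime}$, and if $D = D_0^{g(D)}$ then $D^\gamma = D_0^{g(D)\gamma}$, so $g(D)\gamma$ is an admissible choice for $g(D^\gamma)$. Using that $\gamma$ is an isometry and $\PPP_{\CCC\sprime} \cap D^\gamma = (\PPPC \cap D)^\gamma$, a direct computation yields $\Phi_{\CCC\sprime}(D^\gamma) = \Phi_\CCC(D)$; applying the same argument with $\gamma^{-1}$ gives equality of the two images.

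The substantive step is (iii) $\Rightarrow$ (i). Assume $\Phi_\CCC(D) = \Phi_{\CCC\sprime}(D\sprime)$ in $\Faces{\mu}(D_0,\tau)/\Aut(X,D_0)$ for some $D \in \tilVC$ and $D\sprime \in \tilV_{\CCC\sprime}$. Lifting, there exists $h \in \Aut(X, D_0)$ such that the faces $f := (\PPPC \cap D)^{g(D)^{-1}h}$ and $f\sprime := (\PPP_{\CCC\sprime} \cap D\sprime)^{g(D\sprime)^{-1}}$ of $D_0$ coincide. Since $g(D)^{-1}h \in \Aut(X)$, the set $\CCC^{g(D)^{-1}h}$ consists of $\mu$ smooth rational curves whose orthogonal hyperplanes all contain $f$, hence it is contained in $\CCC(f)$. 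Because $f \in \Faces{\mu}(D_0, \tau)$ we have $\CCC(f) \in \CCCC(\tau)$, so $|\CCC(f)| = \mu$, forcing $\CCC^{g(D)^{-1}h} = \CCC(f)$. The analogous argument gives $\CCC\sprime^{g(D\sprime)^{-1}} = \CCC(f\sprime) = \CCC(f)$, and setting $\tilde\gamma := g(D)^{-1} h \, g(D\sprime) \in \Aut(X)$ we conclude $\CCC^{\tilde\gamma} = \CCC\sprime$, establishing (i).

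The main obstacle is the cardinality identification $\CCC^{g(D)^{-1}h} = \CCC(f)$: while both sets are $\tau$-configurations and one is obviously contained in the other, pinning down equality requires knowing that no additional smooth rational curve on $X$ is orthogonal to $\PPP_f$, which is precisely the content of $\CCC(f) \in \CCCC(\tau)$ and hence of the defining condition $f \in \Faces{\mu}(D_0, \tau)$. Once this point is secured, the rest is a bookkeeping exercise with the cocycle $D \mapsto g(D)$, which is well-defined modulo left multiplication by $\Aut(X, D_0)$.
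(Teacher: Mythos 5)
Your proposal is correct and follows essentially the same route as the paper: (i)$\Rightarrow$(ii) via the bijection $D\mapsto D^{\gamma}$ on $\tilVC$ together with the cocycle identity $g(D^{\gamma})=hg(D)\gamma$, and (iii)$\Rightarrow$(i) by transporting the supporting linear subspace $\PPPC$ to $\PPP_{\CCC(f)}$ by an element of $\Aut(X)$. You are in fact slightly more explicit than the paper on two minor points — the nonemptiness of the images needed for (ii)$\Rightarrow$(iii), and the cardinality argument pinning down $\CCC^{g(D)\inv h}=\CCC(f)$ — but these are elaborations of the same argument, not a different one.
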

\begin{proof}
Suppose that $\CCC\sprime=\CCC^{\alpha}$ for some $\alpha\in \Aut(X)$.
We have ${\PPP_{\CCC\sprime}}\spar{\alpha\inv}=\PPP_{\CCC}$.
For $D\in \tilV_{\CCC}$, we have $D^{\alpha} \in\tilV_{\CCC\sprime}$
and $\Phi_{\CCC}(D)= \Phi_{\CCC\sprime}(D^{\alpha})$,
because $g(D^{\alpha})=hg(D)\alpha$ for some $h\in \Aut(X, D_0)$.
Thus, the image of $\Phi_{\CCC}$ is contained in the image of $\Phi_{\CCC\sprime}$.
Therefore (i) implies (ii).
\par
The implication (ii) $\Longrightarrow$ (iii) is obvious.
The implication (iii) $\Longrightarrow$ (i) follows from the fact that,
if $f$ is an element of the $\Aut(X, D_0)$-orbit $\Phi_{\CCC}(D)$,
then $\CCC$ and $\CCC(f)$ are in the same $\Aut(X)$-orbit,
because the supporting linear subspace $\PPP_{\CCC}$ of the face $\PPP_{\CCC}\cap D$
of $D$ is mapped to the supporting linear subspace $\PPP_{\CCC(f)}$ of the face $f$
of $D_0$ by an element of $\Aut(X)$.
\end{proof}
As is seen from the surjectivity of the map \eqref{eq:surjmap}, 
every $\Aut(X)$-orbit in $\CCCC(\tau)$ contains
a configuration $\CCC(f)$ for some $f\in \Faces{\mu}(D_0, \tau)$.
Hence, calculating the images of $\Phi_{\CCC(f)}$ for all $f\in \Faces{\mu}(D_0, \tau)$,
we obtain the orbit decomposition of $\CCCC(\tau)$ by $\Aut(X)$.
\par
The images of $\Phi_{\CCC(f)}$ for faces $f\in \Faces{\mu}(D_0, \tau)$ are computed as follows.
The idea is to calculate the orbit decomposition of $\tilVC$ 
under the action of $\Aut(X, \CCC)$ 
by Borcherds' method.
For simplicity, we put
\[
[\FFF]:=\Faces{\mu}(D_0, \tau)/\Aut(X, D_0), 
\]
and for $f\in \Faces{\mu}(D_0, \tau)$, let $[f]\in [\FFF]$ denote the $\Aut(X, D_0)$-orbit containing $f$.
We construct a graph whose set of nodes is $[\FFF]$
and whose set of edges is defined as follows.
Let $f$ be an element of $\Faces{\mu}(D_0, \tau)$.
We have $D_0\in \tilV_{\CCC(f)}$,
and $\Phi_{\CCC(f)}$ maps $D_0$ to $[f]$,
as $\PPP_{\CCC(f)}\cap D_0=f$.
Using the set $\Faces{\mu+1}(D_0)$, we compute the set
\[
\Faces{\mu+1}\angs{f}:=\set{\varphi\in \Faces{\mu+1}(D_0)}{f\supset \varphi},
\]
which is the set of all walls of the $\Lts/[\CCC(f)]\sperp$-chamber 
$f=\PPP_{\CCC(f)}\cap D_0$.
For each $\varphi\in \Faces{\mu+1}\angs{f}$, we calculate the set $\DDD(\varphi)$ 
and subsequently compute the subset
\[
\DDD(f, \varphi):=\tilV_{\CCC(f)}\cap \DDD(\varphi).
\]
For an $\Lts/\SX$-chamber $D$,
we have $D\in \DDD(f, \varphi)$ if and only if 
$\PPP_{\CCC(f)}\cap D$ is an $\Lts/[\CCC(f)]\sperp$-chamber 
that is contained in $\PPP_{\CCC(f)}\cap \NX$
and that is either equal to $f$
or adjacent to $f$ across the wall $\varphi$.
For each $D\in \DDD(f, \varphi)$,
we choose an automorphism $g(D)\in \Aut(X)$
such that $D=D_0^{g(D)}$,
and consider the face 
\[
f\sprime:=(\PPP_{\CCC(f)})^{(g(D)\inv)}\cap D_0.
\]
Then $f\sprime$ is an element of $\Faces{\mu}(D_0, \tau)$ 
and $[f\sprime]\in [\FFF]$ does not depend on the choice of $g(D)$.
If $[f\sprime]\ne [f]$, we connect the nodes $[f]$ and $[f\sprime]$ by an edge.
Performing this procedure 
for all $[f]\in [\FFF]$ , $\varphi\in \Faces{\mu+1}\angs{f}$ and $D\in \DDD(f, \varphi)$,
we obtain a graph structure on $[\FFF]$.
\par
Since any pair of elements of $V_{\CCC(f)}$ 
(that is, any pair of $\Lts/[\CCC(f)]\sperp$-chambers contained in $\PPP_{\CCC(f)}\cap N_X$) 
is connected by 
the adjacency relation of $\Lts/[\CCC(f)]\sperp$-chambers,
it follows that the image of $\Phi_{\CCC(f)}$ is 
precisely the connected component of the graph $[\FFF]$
containing the node $[f]$.
\par
Therefore the number of $\Aut(X)$-orbits in $\CCCC(\tau)$ is equal to 
the number of connected components of the graph $[\FFF]$.
Using this method, we obtain a proof of Theorem~\ref{thm:rats}.
%
%
\begin{example}
We consider 
the case where $\tau=2A_1$.
The number of nodes of 
the graph $[\FFF]=\Faces{2}(D_0, 2A_1)/\Aut(X, D_0)$ is $23$,
and this graph has two connected components $[\FFF]_{21}$ and $[\FFF]_{2}$
of size $21$ and $2$, respectively.
Every face in the connected component $[\FFF]_{21}$ is of type $4_{12}$,
whereas every face in the connected component $[\FFF]_{2}$ is of type $8_{21}$.
Hence $\Aut(X)$ partitions 
the set $\CCCC(2A_1)$
of pairs of disjoint smooth rational curves 
 into two orbits
$\CCCC(2A_1)_{21}$ and $\CCCC(2A_1)_{2}$, which 
correspond to 
the connected components $[\FFF]_{21}$ and $[\FFF]_{2}$, respectively.
\par
The linear subspace 
\[
(L_{\ordminus \ordminus \ordminus})\sperp \cap (L_{\ordminus 0 \ordplus})\sperp 
\]
of $\PX$ is a supporting linear subspace of a face of $D_0$ with type $4_{12}$.
Hence the pair $\{L_{\ordminus \ordminus \ordminus}, \, L_{\ordminus 0 \ordplus}\}$
is a member of the orbit $\CCCC(2A_1)_{21}$.
\par
Let $L\sprime$ be the image of the smooth rational curve $L_{\ordplus\ordminus\ordminus}$ by 
 the automorphism $g\spbr{4}\in \Aut(X)$ of order $2$ given in Section~\ref{subsec:extraaut4}.
Then the linear subspace 
\[
(L_{\ordplus\ordminus\ordminus})\sperp \cap (L\sprime)\sperp 
\]
of $\PX$ 
is a supporting linear subspace of a face of $D_0$ with type $8_{21}$.
Hence the pair $\{ L_{\ordplus\ordminus\ordminus}, \, L\sprime \}$
is a member of the orbit $\CCCC(2A_1)_{2}$.
(Note that, for every face $f$ of type $8_{21}$, there exists a wall in the orbit $o_4$ passing through $f$. 
See Table~\ref{table:angles}.)
\end{example}
\section{Relations}\label{sec:relations}
It is well known that
a  set of defining relations of a group acting on a space of constant curvature can be derived 
 from the shape of a fundamental domain.
See, for example, the survey~\cite{VinbergShvartsman1993}.
In our current setting involving $\Aut(X)$ and $D_0$, however, 
we cannot apply this theory directly because of the following reasons.
First, the cone $D_0$ is not a fundamental domain;
it has a non-trivial automorphism group $\Aut(X, D_0)$.
Second, not all codimension-$2$ faces contribute to relations
as $D_0$ has outer faces.
Hence we provide a detailed explanation how to obtain a set of defining relations for $\Aut(X)$ 
from $D_0$. 
The main result of this section is Theorem~\ref{thm:relations}.
\begin{remark}
In~\cite{DolgachevShimada2020},
we have treated the case where $\Aut(X, D_0)$ is trivial.
\end{remark}
For simplicity,
we put
\[
\Genszero:=\Aut(X, D_0).
\]
Recall from~\eqref{eq:Adj}
that we have defined 
$\Adj(w)$ for each inner wall $w$ of $D_0$.
For $h\in \Genszero$ and $g\in \Adj(w)$, we have 
$hg\in \Adj(w)$, and this action 
of $\Genszero$ on $\Adj(w)$ by the multiplication from the left is free and simply transitive.
Note that $\Genszero$ and these $\Adj(w)$ are pairwise disjoint.
We put
\[
\GensA:=\bigsqcup_{w\,:\,\textrm{inner}} \Adj(w)
\quand 
\Gens:=\Genszero\sqcup \GensA.
\]
Since $D_0$ has exactly $56$ inner walls,
we have $|\Gens|=|\Genszero|+56\times |\Genszero|=912$.
\begin{lemma}\label{lem:closed}
The subset $\GensA$ of $\Aut(X)$ is closed under the operation $g\mapsto g\inv$.
Hence so is $\Gens=\Genszero\sqcup \GensA$.
\end{lemma}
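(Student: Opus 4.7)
The plan is to unwind the definition of $\Adj(w)$ and transport the adjacency relation by the inverse isometry. Fix $g \in \GensA$, so $g \in \Adj(w)$ for some inner wall $w$ of $D_0$. By definition, $D_0^g$ is the unique $\Lts/\SX$-chamber adjacent to $D_0$ across $w$, which means $\DDD(w) = \{D_0, D_0^g\}$ and $w = D_0 \cap D_0^g$.

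Next, I would apply the isometry $g\inv$ to this adjacency. Since $g\inv \in \OG(\SX,\PX)$ preserves the tessellation of $\PX$ by $\Lts/\SX$-chambers, the image of the pair $\{D_0, D_0^g\}$ under $g\inv$, namely $\{D_0^{g\inv}, D_0\}$, is still a pair of $\Lts/\SX$-chambers sharing a wall, and that wall is $w^{g\inv} = D_0 \cap D_0^{g\inv}$. In particular, $w^{g\inv}$ is a wall of $D_0$, and $D_0^{g\inv}$ is the unique $\Lts/\SX$-chamber adjacent to $D_0$ across it.

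It remains to check that $w^{g\inv}$ is an \emph{inner} wall of $D_0$. This is where the hypothesis $g \in \Aut(X) \subset \OG(\SX,\PX)$ (and hence $g\inv \in \Aut(X)$) is used: by Proposition~\ref{prop:fromfact3}(1), the group $\Aut(X)$ acts on $\VX$, so $D_0^{g\inv} \in \VX$, meaning $D_0^{g\inv} \subset \NX$. Therefore the chamber adjacent to $D_0$ across $w^{g\inv}$ lies in $\VX$, which is precisely the definition that $w^{g\inv}$ is inner. Hence $g\inv \in \Adj(w^{g\inv}) \subset \GensA$, proving $\GensA$ is closed under inversion. The closure of $\Gens = \Genszero \sqcup \GensA$ under inversion then follows immediately, since $\Genszero = \Aut(X, D_0)$ is a group and hence trivially closed under inversion, and both pieces of the disjoint union are closed under inversion separately.

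There is no real obstacle here; the argument is essentially a definitional unfolding. The only subtlety worth flagging is that one must verify $w^{g\inv}$ is genuinely a wall of $D_0$ (i.e., contains a nonempty open subset of $(w^{g\inv})$'s supporting hyperplane), but this is automatic: $w$ already has this property in $D_0^g$, and $g\inv$ is a linear isometry carrying $D_0^g$ onto $D_0$.
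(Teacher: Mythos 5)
Your argument is correct and is essentially the same as the paper's: take $g\in\Adj(w)$, apply $g\inv$ to transport the adjacency of $D_0$ and $D_0^g$ across $w$ to an adjacency of $D_0^{g\inv}$ and $D_0$ across $w^{g\inv}$, and observe that $D_0^{g\inv}\subset\NX$ (because $\Aut(X)$ preserves $\NX$ and its tessellation) forces $w^{g\inv}$ to be inner. The only cosmetic difference is that you cite Proposition~\ref{prop:fromfact3}(1) for the invariance of $\VX$, whereas that fact is already established when the action of $\Aut(X)$ on the tessellation of $\NX$ is set up in Section~\ref{subsec:Borcherds}.
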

\begin{proof}
Suppose that $g\in \Adj(w)$, 
where $w$ is an inner wall of $D_0$.
Then $D_0$ and $D_0^g$ are adjacent across $w$.
Hence $D_0^{(g\inv)}$ and $D_0$ are adjacent across $w^{(g\inv)}$.
Since $D_0^{(g\inv)}\subset \NX$, 
the wall $w^{(g\inv)}$ of $D_0$ is inner, and
we have $g\inv\in \Adj(w^{(g\inv)})$.
\end{proof}
We consider $\Gens$ as an alphabet, 
and denote by $\Gensst$ the set of finite sequences of elements of $\Gens$.
An element of $\Gensst$ is called a \emph{word}.
Note that 
the empty sequence $\empseq:=[\;]$ is also a word.
The conjunction of two words $\word{u}$ and $\word{v}$ is denoted by $\word{u}\cdot \word{v}$
or by $\word{u}\word{v}$.
We have seen in Proposition~\ref{prop:fromfact3} that the multiplication map
\[
\mult\;\;\colon\;\; \Gensst\to \Aut(X)
\]
given by $[\gamma_1, \dots, \gamma_n]\mapsto \gamma_1\cdots\gamma_n$ is surjective.
%
\begin{definition}\label{def:RRR}
A pair $\{\word{w}, \word{w}\sprime\}$ of words is called a \emph{relation}
if $\mult(\word{w})=\mult( \word{w}\sprime)$ holds.
Let $\RRR$ be a set of relations.
The \emph{$\RRR$-equivalence relation} is 
the minimal equivalence relation on $\Gensst$
that satisfies the following:
if two words $\word{u}$ and $\word{v}$ 
have decompositions
$\word{u}=\word{a}\cdot \word{w}\cdot \word{b}$ 
and 
$\word{v}=\word{a}\cdot \word{w}\sprime\cdot \word{b}$ 
with $\{\word{w}, \word{w}\sprime\}\in \RRR$,
then $\word{u}$ and $\word{v}$ are $\RRR$-equivalent.
\end{definition}
Note that, for any set of relations $\RRR$, 
if two words $\word{u}$ and $\word{v}$ are $\RRR$-equivalent,
then we have $\mult(\word{u})=\mult(\word{v})$.
\begin{definition}\label{def:RRR2}
We say that a set of relations $\RRR$ is a \emph{set of defining relations}
if every word in the fiber 
\[
\KKK:=\mult\inv(1)
\]
of the map $\mult$ over $1\in \Aut(X)$ 
is $\RRR$-equivalent to the empty word $\empseq$.
\end{definition}
Our goal is to exhibit a finite  set of defining relations.
\par
Let $\Reltriv$ be the set of relations consisting of the following pairs of words:
\begin{eqnarray*}
&&
\{[1], \empseq\},\\
&&
\hbox to 2cm {$\{[\gamma, \gamma\inv], [1]\}$}\quad (\gamma\in \Gens),\\
&&
\hbox to 2cm {$\{[h, h\sprime], [hh\sprime]\}$}\quad (h, h\sprime \in \Genszero),\\
&&
\hbox to 2cm {$\{[h, g], [hg]\}$}\quad(h \in \Genszero, \,g\in \GensA).
\end{eqnarray*}
Here, in the pair $\{[h, h\sprime], [hh\sprime]\} $, 
the word $[h, h\sprime]$ is of length $2$, whereas $[hh\sprime]$ is the word consisting of a single letter $hh\sprime\in \Genszero$.
The same remark is applied to the pair $\{[h, g], [hg]\}$.
\par
A word $\word{u}$ is said to be \emph{of $\gh$-form} if it is of the form
\[
[g_N, \dots, g_1, h] \qquad(g_N, \dots, g_1\in \GensA, \;\; h\in \Genszero).
\]
We allow $N$ to be $0$, so that $[h]$ is (and hence $[1]$ is) of $\gh$-form for any $h\in \Genszero$.
It is easy to see that every word is $\Reltriv$-equivalent to a word of $\gh$-form.
For example,
for $g_1, g_2\in \GensA$ and $h\in \Genszero$,
the word $[g_1, h, g_2]$ is $\Reltriv$-equivalent to the word $[g_1, hg_2, 1]$,
which is of $\gh$-form.
\par
Let $N$ be a non-negative integer.
A \emph{chamber path} of length $N$ is a sequence 
\[
\blambda:=(D\spar{N}, \dots, D\spar{0})
\]
of $\Lts/\SX$-chambers $D\spar{k}$ such that 
\begin{enumerate}[(i)]
\item each $D\spar{k}$ is contained in $\NX$,  and
\item  $D\spar{k}$ and $D\spar{k-1}$ are distinct and adjacent
for $k=1, \dots, N$.
\end{enumerate}
A chamber path is read from right to left, so that 
the chamber path $\blambda$ above is \emph{from $D\spar{0}$ to $D\spar{N}$}.
Let 
\[
\blambda\sprime:=(D\sp{\prime (N\sprime)}, \dots, D\sp{\prime(0)})
\]
be a chamber path of length $N\sprime$ such that $D\sp{\prime(0)}=D\spar{N}$.
Then the conjunction 
\[
\blambda\sprime\cdot \blambda:=(D\sp{\prime (N\sprime)}, \dots, D\sp{\prime(0)}, D\spar{N-1}, \dots, D\spar{0})
\]
is defined and is a chamber path of length $N\sprime+N$.
A \emph{chamber loop} is a chamber path $(D\spar{N}, \dots, D\spar{0})$
such that $D\spar{N}=D\spar{0}$.
In this case, we say that $D\spar{0}$ is the \emph{base point} of the chamber loop.
\par
Let $\word{u}=[g_N, \dots, g_1, h]$ be a word 
 of $\gh$-form.
Then we have a chamber path 
\[
\lambda(\word{u})=(D\spar{N}, \dots, D\spar{0})
\]
from $D\spar{0}=D_0$ to $D\spar{N}=D_0^{\mult(\word{u})}$ 
defined by
\[
D\spar{0}:=D_0^h, \;\; D\spar{1}:=D_0^{g_1h}, 
\quad\dots \quad 
D\spar{k}:=D_0^{g_k\cdots g_1h}, 
\quad\dots \quad 
D\spar{N}:=D_0^{g_N\cdots g_1h}.
\]
We call $\lambda(\word{u})$ the \emph{chamber path associated with $\word{u}$}.
If $\word{u}\in \KKK=\mult\inv(1)$, then 
$\lambda(\word{u})$ is a chamber loop with the base point $D_0$.
\par
Conversely, let $\blambda=(D\spar{N}, \dots, D\spar{0})$ be a chamber path of length $N$
starting from $D\spar{0}=D_0$.
We define
\[
\WWW(\blambda):=\set{\word{u}\in \Gensst}{\textrm{$\word{u}$ is of $\gh$-form such that $\lambda(\word{u})=\blambda$}}.
\]
A word $\word{u}=[g_N, \dots, g_1, h]$ of $\gh$-form is in $\WWW(\blambda)$ if and only if 
\begin{equation}\label{eq:Dspark}
D\spar{k}=D_0^{g_k \cdots g_1 h}
\end{equation}
holds for $k=0, \dots, N$.
Here we set $g_0=h$.
The elements of $\WWW(\blambda)$ 
can be enumerated by the following method.
First choose $g_0=h$ arbitrarily from $\Genszero$.
Suppose that $g_m, \dots, g_1, h$ have been obtained such that~\eqref{eq:Dspark} holds
for $k=0, \dots, m$.
Let $w\spar{m}$ be the wall between $D\spar{m}$ and $D\spar{m+1}$.
Then 
\[
w_m:=(w\spar{m})^{(g_m \cdots g_1 h)\inv}
\]
 is an inner wall of $D_0$.
We choose $g_{m+1}$ from $\Adj(w_{m})$ arbitrarily,
and append it to the beginning of the sequence $g_m, \dots, g_1, h$.
By iterating this process 
until we reach $m+1=N$,
we obtain a word in $\WWW(\blambda)$.
Repeating this process for all possible choices of $h\in \Genszero$ and $g_{m+1}\in \Adj(w_{m})$,
we obtain all words in $\WWW(\blambda)$.
Therefore
the size of the set $\WWW(\blambda)$ is equal to $|\Genszero|^{N+1}$.
\par
Now suppose that $\blambda$ is a chamber loop with the base point $D_0$.
Then, for any $\word{u}\in \WWW(\blambda)$, we have $\mult(\word{u})\in \Genszero$,
and the map $\mult$ induces a surjection from $\WWW(\blambda)$ onto $ \Genszero$.
We define 
\[
\WK(\blambda):=\WWW(\blambda)\cap \KKK.
\]
The size of the set $\WK(\blambda)$ is equal to $|\Genszero|^{N}$.
In particular, if $N=0$, then $\WK(\blambda)$ is equal to $\{[1]\}$.
\begin{remark}\label{rem:getaword}
Suppose that $g\in \Aut(X)$ is given.
Then a word $\word{u}\in \Gensst$ 
satisfying $\mult(\word{u})=g$ can be obtained by means of the following method.
We choose a chamber path $\blambda=(D\spar{N}, \dots, D\spar{0})$
from $D\spar{0}=D_0$ to $D\spar{N}=D_0^g$,
and compute an element 
\[
\word{u}\sprime=[g_N, \dots, g_1, 1]
\]
 of $ \WWW(\blambda)$ using the method above.
Since $D_0^g=D\spar{N}=D_0^{\mult(\word{u}\sprime)}$, 
there exists an element $h\in \Genszero$ such that $g= h\cdot \mult(\word{u}\sprime)$.
Then the word $\word{u}:=[h, g_N, \dots, g_1]$ satisfies $\mult(\word{u})=g$.
\end{remark}
Let $D\spar{0}$ be an $\Lts/\SX$-chamber contained in $\NX$, and 
let $f$ be an inner face of $D\spar{0}$ of codimension $2$.
Recall that $\DDD(f)$ is the set of $\Lts/\SX$-chambers $D$ such that  $D\supset f$.
We have $D\spar{0}\in \DDD(f)$.
Since $f$ is inner, we have $\DDD(f)\subset\VX$.
Then we have two chamber loops $\blambda(f)^{+}$ and $\blambda(f)^{-}$ with the base point $D\spar{0}$
such that 
\begin{enumerate}[(i)]
\item every chamber in the loop belongs to $\DDD(f)$,  and 
\item each element of $\DDD(f)\setminus \{D\spar{0}\}$ appears in the loop exactly once.
\end{enumerate}
See Figure~\ref{fig:simpleloops}.
These two loops differ only in the direction to which the loop goes around $f$.
We call these loops the \emph{simple chamber loops around $f$ with the base point $D\spar{0}$}.
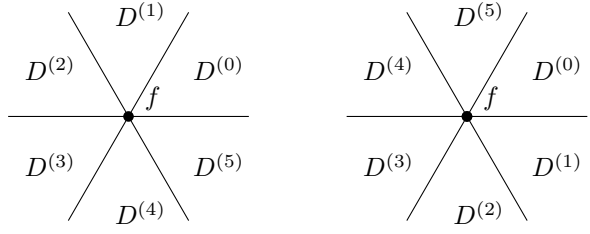
\begin{figure} 
\begin{tikzpicture}[x=.8cm,y=.8cm]
\begin{scope}[xshift=0cm, yshift=0cm]
\coordinate (P0) at (2,0);
\coordinate (P1) at (1,1.7321);
\coordinate (P2) at (-1,1.7321);
\coordinate (P3) at (-2,0);
\coordinate (P4) at (-1,-1.7321);
\coordinate (P5) at (1,-1.7321);
\draw [thin] (P0)--(P3);
\draw [thin] (P1)--(P4);
\draw [thin] (P2)--(P5);
\fill (0,0) circle (2pt);
\node at (.4, .3) {$f$};
\node at (1.5, .8) {$D\spar{0}$};
\node at (.2, 1.7) {$D\spar{1}$};
\node at (-1.3, .8) {$D\spar{2}$};
\node at (-1.3, -.8) {$D\spar{3}$};
\node at (.2, -1.6) {$D\spar{4}$};
\node at (1.5, -.8) {$D\spar{5}$};
\end{scope}
\begin{scope}[xshift=4.5cm, yshift=0cm]
\coordinate (P0) at (2,0);
\coordinate (P1) at (1,1.7321);
\coordinate (P2) at (-1,1.7321);
\coordinate (P3) at (-2,0);
\coordinate (P4) at (-1,-1.7321);
\coordinate (P5) at (1,-1.7321);
\draw [thin] (P0)--(P3);
\draw [thin] (P1)--(P4);
\draw [thin] (P2)--(P5);
\fill (0,0) circle (2pt);
\fill (0,0) circle (2pt);
\node at (.4, .3) {$f$};
\node at (1.5, .8) {$D\spar{0}$};
\node at (.2, 1.7) {$D\spar{5}$};
\node at (-1.3, .8) {$D\spar{4}$};
\node at (-1.3, -.8) {$D\spar{3}$};
\node at (.2, -1.6) {$D\spar{2}$};
\node at (1.5, -.8) {$D\spar{1}$};
\end{scope}
\end{tikzpicture}
\caption{$\blambda^+(f)$ and $\blambda^-(f)$}\label{fig:simpleloops}
\end{figure}
\par
Suppose that $f_0$ is an inner face of $D_0$ of codimension $2$.
In other words, $f_0$ is of type $4_{40}$, $6_{60}$, $8_{80a}$, or $8_{80b}$ (see~Figure~\ref{fig:Faces2}).
Let $\blambda(f_0)^{+}$ and $\blambda(f_0)^{-}$ be the 
simple chamber loops around $f_0$ with the base point $D_0$.
Then, for any word $\word{u}$ belonging to $\WWW(\blambda(f_0)^{+})$ or $ \WWW(\blambda(f_0)^{-})$, 
we have $\mult(\word{u})\in \Genszero$.
We define a set of relations $\Relface$ as 
\[
\Relface:=\bigcup_{f_0} \;\set{\; \{\word{u}, [\mult(\word{u})]\}\;}{\word{u}\in \WWW(\blambda(f_0)^{+})\cup \WWW(\blambda(f_0)^{-})},
\]
where $f_0$ ranges over the set of inner faces of $D_0$ of codimension $2$.
\begin{theorem}\label{thm:relations}
The set $\Reltriv\cup \Relface$ 
is a set of defining relations of $\Aut(X)$ 
with respect to the generating set $\Gens=\Genszero\sqcup \GensA$.
\end{theorem}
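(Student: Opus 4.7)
The plan is to prove the theorem by a geometric contraction argument: every word representing $1 \in \Aut(X)$ corresponds to a chamber loop at $D_0$, which can be reduced to the constant loop through elementary moves, each translating to a relation in $\Relface$ (modulo $\Reltriv$). First, using only $\Reltriv$, I would rewrite any given word $\word{u} \in \KKK$ in $\gh$-form $\word{u} = [g_N, \ldots, g_1, h]$. Since $h \in \Genszero = \Aut(X, D_0)$ fixes $D_0$, the associated chamber path $\lambda(\word{u})$ is a chamber loop at $D_0$ of length $N$, so it suffices to show by induction on $N$ that every such $\gh$-form word in $\KKK$ is $(\Reltriv \cup \Relface)$-equivalent to $\empseq$. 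The base case $N=0$ forces $h = \mult(\word{u}) = 1$, hence $\word{u} = [1] \sim \empseq$ by $\Reltriv$.

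The inductive step rests on the geometric fact that $\NX$ is a convex subset of $\PX$, hence its interior is simply connected. I would define the $2$-dimensional cellular complex $K$ whose $0$-cells are elements of $\VX$, whose $1$-cells are inner walls between pairs of chambers in $\VX$, and whose $2$-cells are the inner codimension-$2$ faces of chambers in $\VX$, each glued along the cyclic arrangement of $\DDD(f)$ around $f$. A standard argument identifies $K$ (up to homotopy) with the interior of $\NX$, so $K$ is simply connected. Consequently, any chamber loop in $K$ decomposes, up to free-group moves on lifts, as a product of lassos, where each lasso consists of a chamber path from $D_0$ to some $D' \in \VX$, followed by a simple loop $\blambda^{\pm}(f')$ around an inner codimension-$2$ face $f'$ of $D'$, followed by the same chamber path reversed.

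For each such lasso I would translate the geometric move into an algebraic rewrite. Writing $D' = D_0^g$ for the element $g = \mult(\word{a}) \in \Aut(X)$ associated with a $\gh$-form word $\word{a}$ representing the outgoing chamber path, the loop $\blambda^{\pm}(f')$ at $D'$ transports via $g$ to a simple loop $\blambda^{\pm}(f)$ at $D_0$ around the face $f := (f')\spar{g\inv}$. The word-theoretic effect of the lasso is therefore to insert, between $\word{a}$ and the reverse of $\word{a}$, a $\Relface$-relation for the face $f$; the reversed chamber path cancels the forward one via repeated applications of the inversion rule $\{[\gamma, \gamma\inv], [1]\}$ in $\Reltriv$, together with the amalgamation rules for consecutive $\Genszero$-letters. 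Accumulating all lasso moves expresses $\word{u}$ as a product of $\Reltriv$-conjugates of $\Relface$-relations, reducing it $(\Reltriv \cup \Relface)$-equivalently to the empty word.

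The main obstacle is the careful bookkeeping between topological contraction in $K$ and algebraic rewriting on $\gh$-form words. Specifically, one must verify: (i) that the complex $K$ is indeed homotopy-equivalent to the interior of $\NX$, with the link of each inner codimension-$2$ face $f$ being the cyclic graph on $\DDD(f)$ of one of the types in Figure~\ref{fig:Faces2}, and with outer codimension-$2$ faces of chambers in $\VX$ lying on $\bdr \NX$ and thus causing no obstruction; (ii) that base-point transport of a simple loop to $D_0$ preserves the correct $\Genszero$-residual, so that both sides of the induced rewrite remain of $\gh$-form with matching trailing letter in $\Genszero$; and (iii) that the accumulated $\Genszero$-element at the basepoint, after all lasso contractions, equals $\mult(\word{u}) = 1$, so the final $[h]$ with $h=1$ cancels via $\Reltriv$.
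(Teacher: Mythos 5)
Your proposal is correct and takes essentially the same route as the paper: reduce to $\gh$-form, pass to the associated chamber loop in $\NX$, use simple connectivity of $\NX$ to contract that loop through inner codimension-$2$ faces, and translate free cancellations into $\Reltriv$ and face loops into $\Relface$. The only difference is organizational: the paper contracts the loop by a sequence of local type-$\typeI$ and type-$\typeII$ moves, each lifted to a word equivalence by a separate proposition, whereas you decompose the loop globally into conjugates of face relators (lassos); these are equivalent formulations of the same combinatorial-topological fact, and your bookkeeping points (i)--(iii) are precisely the issues the paper's two propositions resolve.
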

To prove this, we introduce additional definitions and propositions.
Let 
\begin{equation}\label{eq:blambda}
\blambda=(D\spar{N}, \dots, D\spar{0})\quad\textrm{with}\quad D\spar{N}=D\spar{0}=D_0
\end{equation}
be a chamber loop with the base point $D_0$.
We say that $\blambda$ is \emph{reduced to a chamber loop $\blambda\sprime$ 
by a type-$\typeI$-move} and write $\blambda\Rightarrow_{\typeI} \blambda\sprime$ if there exists 
a subsequence $(D\spar{k+1}, D\spar{k}, D\spar{k-1})$ in $\blambda$ such that $D\spar{k+1}=D\spar{k-1}$
and that $\blambda\sprime$ is obtained from $\blambda$ by removing the two chambers $D\spar{k+1}$ and $D\spar{k}$.
See Figure~\ref{fig:typeI}.
\begin{figure}
{\small
\begin{tikzpicture}[x=.7cm,y=.45cm]
\begin{scope}[xshift=0cm, yshift=0cm]
\draw [thick, <-] (-6,0)--(-5,0);
\draw [thick, <-] (-3,0)--(-2,0);
\draw [thick, <-] (2,0)--(3,0);
\draw [thick, <-] (5,0)--(6,0);
\draw [thick, <-] (-.3,1)--(-.3,2);
\draw [thick, <-] (.3,2)--(.3,1);
\node at (7, 0) {$\cdots$};
\node at (4, 0) {$D\spar{k-2}$};
\node at (-4, 0) {$D\spar{k+2}$};
\node at (-7, 0) {$\cdots$};
\node at (.2, 3) {$D\spar{k}$};
\node at (0, 0) {$D\spar{k+1}=D\spar{k-1}$};
\node at (-10, 0) {$\blambda$};
\end{scope}
\begin{scope}[xshift=0cm, yshift=-1.3cm]
\draw [thick, <-] (-5,0)--(-4,0);
\draw [thick, <-] (-2,0)--(-1,0);
\draw [thick, <-] (1,0)--(2,0);
\draw [thick, <-] (4,0)--(5,0);
\node at (-6, 0) {$\cdots$};
\node at (-3, 0) {$D\spar{k+2}$};
\node at (3, 0) {$D\spar{k-2}$};
\node at (6, 0) {$\cdots$};
\node at (0, 0) {$D\spar{k-1}$};
\node at (-10,0) {$\blambda\sprime$};
\end{scope}
\end{tikzpicture}
}
\caption{$\blambda\Rightarrow_{\typeI} \blambda\sprime$}\label{fig:typeI}
\end{figure}
We say that two chamber loops $\blambda, \blambda\sprime$ with the base point $D_0$
are \emph{connected by a type-$\typeI$-move}
if either $\blambda \Rightarrow_{\typeI}\blambda\sprime$ 
or $\blambda\sprime \Rightarrow_{\typeI}\blambda$.
\begin{proposition}\label{prop:typeImove}
Suppose that chamber loops $\blambda$ and $\blambda\sprime$ with the base point $D_0$
are connected by a type-$\typeI$-move.
Then, for each word $\word{u}\in \WK(\blambda)$,
there exists a word $\word{u}\sprime\in \WK(\blambda\sprime)$
that is $\Reltriv$-equivalent to $\word{u}$.
\end{proposition}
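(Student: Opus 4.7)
Without loss of generality, suppose $\blambda \Rightarrow_{\typeI} \blambda'$, so that $\blambda' = (D\spar{N}, \ldots, D\spar{k+2}, D\spar{k-1}, \ldots, D\spar{0})$ is obtained from $\blambda = (D\spar{N}, \ldots, D\spar{0})$ by deleting the two chambers $D\spar{k+1}$ and $D\spar{k}$, where $D\spar{k+1} = D\spar{k-1}$. Given $\word{u} = [g_N, \ldots, g_1, h] \in \WK(\blambda)$, I would explicitly construct a companion word $\word{u}' \in \WK(\blambda')$ and then verify $\word{u} \sim_{\Reltriv} \word{u}'$.

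\textbf{Construction and verification.} Set $\phi_{k-1} := g_{k-1}\cdots g_1 h$, so that $D_0^{\phi_{k-1}} = D\spar{k-1}$. The assumption $D\spar{k+1} = D\spar{k-1}$ forces $g_{k+1}g_k \in \Stab(D\spar{k-1}) = \phi_{k-1}\Genszero\phi_{k-1}^{-1}$, so there is a unique $h_0 \in \Genszero$ with $g_{k+1}g_k\phi_{k-1} = \phi_{k-1} h_0$. I would set
\[
\word{u}' := [g_N, \ldots, g_{k+2},\, g_{k-1}, \ldots, g_1,\, h h_0].
\]
This is of $\gh$-form since $hh_0 \in \Genszero$. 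A direct check confirms $\word{u}' \in \WK(\blambda')$: for $j \le k-1$ the factor $hh_0$ still stabilizes $D_0$, giving $D\sp{\prime(j)} = D\spar{j}$; for $j \ge k$ one uses $g_{k+2}(D\spar{k-1}) = g_{k+2}(D\spar{k+1}) = D\spar{k+2}$ and similarly for later chambers; and
\[
\mult(\word{u}') = g_N\cdots g_{k+2}\,\phi_{k-1}h_0 = g_N\cdots g_{k+2}\,g_{k+1}g_k\,\phi_{k-1} = \mult(\word{u}) = 1.
\]

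\textbf{The $\Reltriv$-equivalence.} Since the prefixes $[g_N, \ldots, g_{k+2}]$ of $\word{u}$ and $\word{u}'$ coincide, it suffices to show the suffix equivalence
\[
[g_{k+1}, g_k, g_{k-1}, \ldots, g_1, h] \sim_{\Reltriv} [g_{k-1}, \ldots, g_1, hh_0].
\]
I would proceed by induction on $k$. The base case $k=1$ is explicit: here $h_1 := g_2g_1 \in \Genszero$, so $g_2 = h_1 g_1^{-1}$, and
\[
[g_2, g_1, h] \sim [h_1, g_1^{-1}, g_1, h] \sim [h_1, 1, h] \sim [h_1, h] \sim [h_1 h] = [h h_0],
\]
where the four moves use the rules $\{[h,g],[hg]\}$ (in reverse, to split $g_2$), $\{[\gamma,\gamma^{-1}],[1]\}$ (to cancel $[g_1^{-1}, g_1]$), $\{[1],\empseq\}$ (to drop the internal $1$), and $\{[h,h'],[hh']\}$ (to merge $\Genszero$-elements) respectively. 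For $k \ge 2$, the strategy is to shift the cancellation one position to the right by exploiting the identity $g_{k+1}g_kg_{k-1} = g_{k-1}\gamma$ with $\gamma \in \Stab(D\spar{k-2})$: by inserting a suitable free pair $[\delta,\delta^{-1}]$ and applying splits of the form $\{[h,g],[hg]\}$, one rewrites the suffix in the form $[g_{k-1}, \tilde{\word{s}}\,]$ where $\tilde{\word{s}}$ is a $\gh$-form word whose cancellation lies at position $k-1$, and then invokes the inductive hypothesis.

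\textbf{Main obstacle.} The technical heart of the proof is the inductive step. The shifted element $\gamma = g_{k-1}^{-1}g_{k+1}g_kg_{k-1}$ lies in $\Stab(D\spar{k-2})$ but is generally not expressible as the product of two individual letters of $\GensA$, so the proposed rewrite of the subword $[g_{k+1}, g_k, g_{k-1}]$ cannot be done by a single $\Reltriv$-move. The main difficulty is arranging an explicit sequence of insertions of free-cancellation pairs together with splits via $\{[h,g],[hg]\}$ that (i) respects the $\gh$-form at each intermediate stage (or can be restored to it) and (ii) realizes the desired conjugation purely within the vocabulary of $\Reltriv$. Once this rearrangement is carried out, the inductive step reduces cleanly to the base case.
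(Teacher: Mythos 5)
Your proposal has a genuine gap, and the gap is a symptom of misreading the paper's conventions. The paper lets $\OG(\SX)$ act from the right, so $D\spar{k}=D_0^{g_k\cdots g_1h}=(D_0^{g_{k+1}g_k})^{g_{k-1}\cdots g_1h}$ for $k+1$ in place of $k$, and hence $D\spar{k+1}=D\spar{k-1}$ is equivalent to $D_0^{g_{k+1}g_k}=D_0$, i.e.\ to $g_{k+1}g_k\in\Aut(X,D_0)=\Genszero$ \emph{on the nose} --- not merely to membership in the conjugate $\phi_{k-1}\Genszero\phi_{k-1}\inv$ as you assert. Your element $h_0=\phi_{k-1}\inv (g_{k+1}g_k)\phi_{k-1}$ stabilizes $D\spar{k-1}=D_0^{\phi_{k-1}}$, not $D_0$, so in general $hh_0\notin\Genszero$, your candidate $\word{u}\sprime$ is not of $\gh$-form, and your check that it traces $\blambda\sprime$ silently uses a left action. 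More importantly, the entire ``main obstacle'' you describe --- pushing the cancellation through $g_{k-1},\dots,g_1$ by an induction whose inductive step you admit you have not carried out --- is self-inflicted. The paper's argument is completely local: set $h\sprime:=g_{k+1}g_k\in\Genszero$, delete the letters $g_{k+1},g_k$, and replace the \emph{adjacent} letter $g_{k-1}$ by $h\sprime g_{k-1}$, which again lies in $\GensA$ because $\Genszero$ acts on each $\Adj(w)$ by left multiplication. The required $\Reltriv$-equivalence is then exactly your base case performed in place: split $g_{k+1}$ as $[h\sprime,g_k\inv]$ via $\{[h,g],[hg]\}$, cancel $[g_k\inv,g_k]$, drop the resulting $[1]$, and merge $[h\sprime,g_{k-1}]$ into $[h\sprime g_{k-1}]$. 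No induction and no conjugation are needed, and as written your proof is incomplete precisely at the step you flag as its technical heart.

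A second, smaller gap: ``connected by a type-$\typeI$-move'' covers both $\blambda\Rightarrow_{\typeI}\blambda\sprime$ and $\blambda\sprime\Rightarrow_{\typeI}\blambda$, and these are not interchangeable by symmetry, because the statement quantifies over $\word{u}\in\WK(\blambda)$ and asks for a companion in $\WK(\blambda\sprime)$. The insertion direction (where $\blambda\sprime$ is the longer loop, obtained by inserting a backtrack $D\spprime=D\spar{k}$, $D\sprime$) needs its own construction: with $\gamma_k:=(g_k\cdots g_1h)\inv$ one checks that the wall $(w\sprime)^{\gamma_k}$ of $D_0$ is inner (because $D\sprime\subset\NX$), chooses $g\sprime\in\Adj((w\sprime)^{\gamma_k})$, and inserts the pair $g\sprimeinv,g\sprime$ into $\word{u}$. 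Your ``without loss of generality'' does not cover this case.
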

\begin{proof}
Let $\blambda$ be as in~\eqref{eq:blambda},
and we put $\word{u}=[g_N, \dots, g_1, h]$ with $g_0=h$.
\par
Suppose that 
$\blambda \Rightarrow_{\typeI}\blambda\sprime$ as is shown in Figure~\ref{fig:typeI}.
Then $D\spar{k+1}=D\spar{k-1}$ implies that $g_{k+1} g_k\in \Genszero$.
We put $h\sprime:=g_{k+1} g_k$,
and let $\word{u}\sprime$ be a word obtained from $\word{u}$ by removing the two letters $g_{k+1}, g_{k}$
and replacing $g_{k-1}$ with $h\sprime g_{k-1}$.
Then we see that $\word{u}\sprime$ is $\Reltriv$-equivalent to $\word{u}$,
using the relation $\{[h\sprime, g_{k}\inv ], [g_{k+1}]\}$.
It is easy to see that $\word{u}\sprime$ belongs to $ \WK(\blambda\sprime)$.
\par
Conversely, suppose that $\blambda\sprime \Rightarrow_{\typeI}\blambda$.
We assume that $\blambda\sprime$ is obtained from $\blambda$ by
putting $D\spprime, D\sprime$ on the left of a chamber $D\spar{k}$ in $\blambda$, 
where $ D\spprime=D\spar{k}$ and $D\sprime$ is adjacent to $D\spar{k}=D\spprime$.
Let $w\sprime$ be the wall between $D\spar{k}$ and $D\sprime$, 
and define 
\[
\gamma_k:=(g_k \dots g_1 h)\inv.
\]
Then $(w\sprime)^{\gamma_k}$ is an inner wall of $D_0=(D\spar{k})^{\gamma_k}$.
We choose an arbitrary element $g\sprime$ from $\Adj((w\sprime)^{\gamma_k})$.
Then we have $D\sprime=D_0\sp{g\sprime g_k \dots g_1 h}$.
We make a word $\word{u}\sprime$ by 
putting $g\sprimeinv, g\sprime$ on the left of the letter $g_k$ in $\word{u}$. 
Then $\word{u}\sprime$ is $\Reltriv$-equivalent to $\word{u}$, 
and $\word{u}\sprime$ belongs to $ \WK(\blambda\sprime)$.
\end{proof}
We say that a chamber loop $\blambda$ as in~\eqref{eq:blambda} 
is \emph{reduced to a chamber loop $\blambda\sprime$ 
by a type-$\typeII$-move} and 
write $\blambda\Rightarrow_{\typeII} \blambda\sprime$ if there exists a
subsequence 
\[
\brho=(D\spar{m}, \dots, D\spar{k})\quad \textrm{with}\quad m> k 
\]
 in $\blambda$ such that 
 \begin{enumerate}[(i)]
 \item $D\spar{m}=D\spar{k}$,
\item $\brho$ is a simple chamber loop with the base point $D\spar{k}$
 around an inner face $f$ of $D\spar{k}$ 
 of codimension $2$, and 
 \item $\blambda\sprime$ is obtained from $\blambda$ by removing the chambers $D\spar{m-1}, \dots, D\spar{k}$.
  \end{enumerate}
See Figure~\ref{fig:typeII}.
\begin{figure}
\phantom{aa}
\vskip .5cm
{\small
\begin{tikzpicture}[x=.7cm,y=.45cm]
\begin{scope}[xshift=0cm, yshift=0cm]
\draw [thick, <-] (-6,0)--(-5,0);
\draw [thick, <-] (-3,0)--(-2,0);
\draw [thick, <-] (2,0)--(3,0);
\draw [thick, <-] (5,0)--(6,0);
\draw [thick, ->] (.3,.6)--(1.3,1.6);
\draw [thick, ->] (2.0,3)--(2.0,4.3);
\draw [thick, ->] (1.5,5)--(0.5,6);
\draw [thick, ->] (-0.5,6)--(-1.5,5);
\draw [thick, ->] (-2.0,4.3)--(-2.0,3);
\draw [thick, ->] (-1.5, 1.7)--(-.5,.7);
\node at (7, 0) {$\cdots$};
\node at (4, 0) {$D\spar{k-1}$};
\node at (2.2, 2.2) {$D\spar{k+1}$};
\node at (-2.0, 2.2) {$D\spar{m-1}$};
\node at (-4, 0) {$D\spar{m+1}$};
\node at (-7, 0) {$\cdots$};
\fill (0,3.3) circle (2pt);
\node at (.5, 3.3) {$f$};
\node at (0, 0) {$D\spar{m}=D\spar{k}$};
\node at (-10, 0) {$\blambda$};
\end{scope}
\begin{scope}[xshift=0cm, yshift=-1.5cm]
\draw [thick, <-] (-5,0)--(-4,0);
\draw [thick, <-] (-2,0)--(-1,0);
\draw [thick, <-] (1,0)--(2,0);
\draw [thick, <-] (4,0)--(5,0);
\node at (-6, 0) {$\cdots$};
\node at (-3, 0) {$D\spar{m+1}$};
\node at (3, 0) {$D\spar{k-1}$};
\node at (6, 0) {$\cdots$};
\node at (0, 0) {$D\spar{m}$};
\node at (-10,0) {$\blambda\sprime$};
\end{scope}
\end{tikzpicture}
}
\caption{$\blambda\Rightarrow_{\typeII} \blambda\sprime$}\label{fig:typeII}
\end{figure}
We say that two chamber loops $\blambda, \blambda\sprime$ with the base point $D_0$
are \emph{connected by a type-$\typeII$-move}
if either $\blambda \Rightarrow_{\typeII}\blambda\sprime$ 
or $\blambda\sprime \Rightarrow_{\typeII}\blambda$.
\begin{proposition}\label{prop:typeIImove}
Suppose that chamber loops $\blambda$ and $\blambda\sprime$ with the base point $D_0$
are connected by a type-$\typeII$-move.
Then, for each word $\word{u}\in \WK(\blambda)$,
there exists a word $\word{u}\sprime\in \WK(\blambda\sprime)$
that is $(\Reltriv\cup \Relface)$-equivalent to $\word{u}$.
\end{proposition}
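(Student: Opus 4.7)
The approach reduces the problem to a local rewriting inside $\word{u}$ supported by a single relation from $\Relface$ together with a handful of elementary $\Reltriv$-moves. Let $\brho = (D\spar{m}, \dots, D\spar{k})$ be the simple chamber loop around the inner codimension-$2$ face $f$ of $D\spar{k}$, appearing in whichever of $\blambda, \blambda\sprime$ is the longer loop; write $\word{u} = [g_N, \dots, g_1, h]$ for the given word, and set $\gamma := g_k \cdots g_1 h$, so that $D\spar{k} = D_0\sp{\gamma}$. Put $f_0 := f\sp{\gamma\inv}$, an inner codimension-$2$ face of $D_0$; then conjugation by $\gamma\inv$ sends $\brho$ to one of the two simple chamber loops $\blambda(f_0)^{+}, \blambda(f_0)^{-}$ around $f_0$ based at $D_0$. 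A direct check, using that the adjacency from $D\spar{k+j-1}$ to $D\spar{k+j}$ is effected by the same generator $g_{k+j}$ before and after conjugation, shows that $[g_m, \dots, g_{k+1}, 1]$ is a word of $\gh$-form belonging to $\WWW(\blambda(f_0)^{\pm})$, with multiplier $h_0 := g_m \cdots g_{k+1} \in \Genszero$. Hence the pair
\[
\{[g_m, \dots, g_{k+1}, 1],\; [h_0]\}
\]
is a member of $\Relface$.

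In the first case $\blambda \Rightarrow_{\typeII} \blambda\sprime$, the word $\word{u} \in \WK(\blambda)$ already contains the consecutive letters $g_m, \dots, g_{k+1}$. Insert a letter $[1]$ between $g_{k+1}$ and $g_k$ in $\word{u}$ by applying $\{[1], \empseq\} \in \Reltriv$ in reverse; then apply the $\Relface$-relation above to replace the resulting sub-word $[g_m, \dots, g_{k+1}, 1]$ with $[h_0]$; finally fuse $[h_0, g_k]$ into $[h_0 g_k]$ via $\{[h_0, g_k], [h_0 g_k]\} \in \Reltriv$. The element $h_0 g_k$ lies in $\GensA$ because each $\Adj(w)$ is a left-coset of $\Genszero$ and is therefore closed under left-multiplication by $\Genszero$. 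The outcome
\[
\word{u}\sprime := [g_N, \dots, g_{m+1}, h_0 g_k, g_{k-1}, \dots, g_1, h]
\]
is of $\gh$-form, and a prefix-product verification against the chamber sequence of $\blambda\sprime$ (obtained from $\blambda$ by deleting $D\spar{m-1}, \dots, D\spar{k}$) confirms $\word{u}\sprime \in \WK(\blambda\sprime)$.

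In the reverse case $\blambda\sprime \Rightarrow_{\typeII} \blambda$, the subloop letters must instead be \emph{inserted}. Choose any word $\word{v} = [g\sprime_m, \dots, g\sprime_{k+1}, 1] \in \WWW(\blambda(f_0)^{\pm})$ satisfying $\mult(\word{v}) = 1$; such a $\word{v}$ exists because $\mult$ restricts to a surjection $\WWW(\blambda(f_0)^{\pm}) \surj \Genszero$ with fibres of size $|\Genszero|\sp{m-k}$. Form
\[
\word{u}\sprime := [g_N, \dots, g_{k+1}, g\sprime_m, \dots, g\sprime_{k+1}, g_k, \dots, g_1, h],
\]
which by a prefix-product check lies in $\WK(\blambda\sprime)$. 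The equivalence $\word{u} \sim \word{u}\sprime$ is then obtained by reversing the chain of moves from the first case: introduce a $[1]$-letter between $g_{k+1}$ and $g_k$ in $\word{u}$ via $\Reltriv$, apply $\{[g\sprime_m, \dots, g\sprime_{k+1}, 1],\; [1]\} \in \Relface$ backward to expand that $[1]$ into $\word{v}$, and clean up by a final $\Reltriv$-move.

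The principal technical obstacle is the bookkeeping required to identify the geometric simple loop $\brho$ based at the chamber $D\spar{k}$---which plays no canonical role in the definition of $\Relface$---with an algebraic loop word in $\WWW(\blambda(f_0)^{\pm})$ based at the specific chamber $D_0$ that does appear in $\Relface$. The conjugation by $\gamma$ is precisely the translation mechanism between the two descriptions, and once this identification is correctly in place, all rewriting of $\word{u}$ reduces to a short, essentially mechanical sequence of $\Reltriv$-moves surrounding one central application of $\Relface$; a minor but pervasive subtlety is the need to repeatedly produce or consume the trailing $1$-letter demanded by the $\gh$-form of the words in $\WWW(\blambda(f_0)^{\pm})$.
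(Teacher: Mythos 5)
Your proof is correct and follows essentially the same route as the paper: conjugate the simple chamber loop around $f$ by $(g_k\cdots g_1 h)\inv$ to a simple loop around the inner face $f^{(g_k\cdots g_1 h)\inv}$ of $D_0$, recognize the resulting word of $\gh$-form as one side of a relation in $\Relface$, and splice it into or out of $\word{u}$ with a few $\Reltriv$-moves. Two harmless imprecisions: when $k=0$ the fused letter $h_0 g_k=h_0 h$ lies in $\Genszero$ rather than $\GensA$ (the word is still of $\gh$-form, so nothing breaks), and the existence of $\word{v}$ with trailing letter $1$ and $\mult(\word{v})=1$ is more directly justified, as in the paper, by replacing the leading letter $g_m\sprime$ with $\mult(\word{v})\inv g_m\sprime$ than by the fibre count alone.
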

\begin{proof}
Let $\blambda$ be as in~\eqref{eq:blambda},
and we put $\word{u}=[g_N, \dots, g_1, h]$ with $g_0=h$.
\par
Suppose that 
$\blambda \Rightarrow_{\typeII}\blambda\sprime$ as is shown in Figure~\ref{fig:typeII}.
Then $D\spar{m}=D\spar{k}$ implies 
\[
h\sprime:=g_{m} \cdots g_{k+1}\in \Genszero.
\]
We define 
\[
\gamma_k:=(g_k \dots g_1 h)\inv.
\]
Then $f^{\gamma_k}$ is an inner face of $D_0$,
and the simple chamber loop $\brho=(D\spar{m}, \dots, D\spar{k})$ around $f$ is mapped by $\gamma_k$
to a simple chamber loop $\brho^{\gamma_k}$ 
around $f^{\gamma_k}$ with the base point $D_0$.
Moreover,  the word $[g_m, \dots, g_{k+1}, 1]$ of $\gh$-form is an element of $\WWW(\brho^{\gamma_k})$.
In particular, we have
\[
\{\; [g_m, \dots, g_{k+1}, 1], [h\sprime]\;\}\;\;\in\;\; \Relface.
\]
Let $\word{u}\sprime$ be a word obtained from $\word{u}$ by removing the letters 
$g_{m}, \dots, g_{k+1} $
and replacing $g_{k}$ by $h\sprime g_{k}$.
Then $\word{u}\sprime$ is $(\Reltriv\cup \Relface)$-equivalent to $\word{u}$, 
and we have $\word{u}\sprime\in \WK(\blambda\sprime)$.
\par
Conversely, suppose that 
$\blambda\sprime \Rightarrow_{\typeII}\blambda$.
We assume that $\blambda\sprime$ is obtained from $\blambda$ by putting 
a sequence $D\sp{\prime (n)}, \dots, D\sp{\prime (1)}$ on the left of a chamber $D\spar{k}$ in $\blambda$,
where 
\[
\brho\sprime=(D\sp{\prime (n)}, D\sp{\prime (n-1)}, \dots, D\sp{\prime (1)}, D\sp{\prime (0)})\quad \textrm{with} \quad 
D\sp{\prime (n)}=D\spar{k} \; \;\textrm{and} \; \;D\sp{\prime (0)}:=D\spar{k}
\]
is a simple chamber loop
around an inner face $f$ of $D\spar{k}$.
Again, we put
$\gamma_k:=(g_k \dots g_1 h)\inv$.
Then $f^{\gamma_k}$ is an inner face of $D_0=(D\spar{k})^{\gamma_k}$,
and $\gamma_k$ maps $\brho\sprime$ to a simple chamber loop $\brho^{\prime\gamma_k}$ 
around the inner face $f^{\gamma_k}$ of $D_0$.
Then $\WWW(\brho^{\prime\gamma_k})$ contains a word of the form 
\[
\word{v}:=[g_n\sprime, \dots, g_1\sprime, 1].
\]
We have $m(\word{v})\in \Genszero$.
Since $n>0$, by replacing $g_n\sprime$ with $\mult(\word{v})\inv g_n\sprime$ if necessary,
we can assume that 
\[
g_n\sprime \cdot \cdots \cdot g_1\sprime=1,
\]
and we have $\{\word{v}, [1]\} \in \Relface$.
We make a word $\word{u}\sprime$ from $\word{u}$ by 
putting $g_n\sprime, \dots, g_1\sprime$ on the left of the letter $g_{k}$ in $\word{u}$.
Then $\word{u}\sprime$ is $(\Reltriv\cup \Relface)$-equivalent to $\word{u}$, and $\word{u}\sprime$ belongs to $ \WK(\blambda\sprime)$.
\end{proof}
\begin{proof}[Proof of Theorem~\ref{thm:relations}]
Let $\word{u}$ be a word in $\KKK$.
We show that $\word{u}$ is $(\Reltriv\cup \Relface)$-equivalent to an empty word $\empseq=[\;]$. 
Since every word is $\Reltriv$-equivalent to a word of $\gh$-form,
we can assume that $\word{u}$ is of $\gh$-form.
Let $\blambda_0:=\lambda(\word{u})$ be the chamber loop 
with the base point $D_0$ associated with $\word{u}$.
Since the nef-and-big cone $\NX$ is simply connected,
there exists a sequence
\[
\blambda_0, \;\; \blambda_1, \;\; \dots, \;\; \blambda_n=(D_0)
\]
of chamber loops
with the base point $D_0$ such that,
for $i=1, \dots, n$,
the two loops $\blambda_{i-1}$ and $\blambda_{i}$ are connected 
by either a type-$\typeI$-move
or a type-$\typeII$-move,
and that the last chamber loop $\blambda_n$ is the loop $(D_0)$ of length $0$.
Since $\word{u}\in \WK(\blambda_0)$ and $\WK(\blambda_n)=\{[1]\}$,
Propositions~\ref{prop:typeImove} and~\ref{prop:typeIImove} imply that $\word{u}$
is $(\Reltriv\cup \Relface)$-equivalent to $[1]$,
and hence to $\empseq$.
\end{proof}
\bibliographystyle{plain}
\end{document}